\def\be{\begin{equation}}
\def\ee{\end{equation}}
\def\bea{\begin{equation*}}
\def\eea{\end{equation*}}
\def\begs{\begin{split}}
\def\ends{\end{split}}
\newtheorem{thm}{Theorem}
\newtheorem{lma}[thm]{Lemma}
\newtheorem{cor}[thm]{Corollary}
\newtheorem{prop}[thm]{Proposition}
\newtheorem{claim}{Claim}
\newtheorem*{question}{Question}
\newtheorem*{thm*}{Theorem}
\newtheorem*{prop*}{Proposition}
\newtheorem{df}[thm]{Definition}
\theoremstyle{remark}
\newtheorem{preremark}[thm]{Remark}
\newtheorem{preex}[thm]{Example}
\title{On the chemical distance in critical percolation}
\author{Michael Damron \thanks{The research of M. D. is supported by NSF grant DMS-0901534.} \\ \small{Indiana University, Bloomington}  \and Jack Hanson \\ \small{Indiana University, Bloomington} \and Philippe Sosoe\thanks{The research of P. S. is supported by the Center for Mathematical Sciences and Applications at Harvard University.} \\ \small{CMSA, Harvard}}
\begin{document}

\maketitle

\abstract{We consider two-dimensional critical bond  percolation. Conditioned on the existence of an open circuit in an annulus, we show that the ratio of the expected size of the shortest open circuit to the expected size of the innermost circuit tends to zero as the side length of the annulus tends to infinity, the aspect ratio remaining fixed. The same proof yields a similar result for the lowest open crossing of a rectangle. In this last case, we answer a question of Kesten and Zhang by showing in addition that the ratio of the length of the shortest crossing to the length of the lowest tends to zero in probability. This suggests that the chemical distance in critical percolation is given by an exponent strictly smaller than that of the lowest path.}

\section{Introduction}

The object of this paper is to prove a result concerning the chemical distance inside large open clusters in critical independent bond percolation on $\mathbb{Z}^2$. The chemical distance between two sets $A$ and $B$ is the minimum number of edges in any lattice path of open edges joining $A$ to $B$. 

Distances inside the infinite cluster in supercritical percolation are known to be comparable to the Euclidean distance on $\mathbb{Z}^d$, through the work of G. Grimmett and J. Marstrand \cite[Section 5 (g)]{grimmettmarstrand}. P. Antal and A. Pisztora \cite{antalpisz} give exponential bounds for the probability of deviation from this linear behavior. 

By contrast, little is known in the critical case. The most complete results are available in high dimensions ($d\ge 19$). Using techniques of G. Kozma and A. Nachmias \cite{kozmanachmias1, kozmanachmias2}, R. van der Hofstad and A. Sapozhnikov \cite[Theorem 1.5]{vdhs} have shown that, conditioned on the existence of an open path to Euclidean distance $n$, the chemical distance from the origin to the boundary of a Euclidean box of side length $n$ is at least of order $\epsilon n^2$ with probability at least $1-C\sqrt{\epsilon}$. The matching upper bound follows directly from the work of Kozma and Nachmias (see also \cite[Theorem 2.8]{hhh} for a more general result, which applies also to long-range percolation). These estimates presumably hold for any dimension above the critical dimension $d=6$, but the current proofs rely on results derived from the lace expansion. To the best of our knowledge, there is currently no rigorous work addressing the chemical distance in percolation for $2<d<19$.

Despite the remarkable progress in the study of planar critical percolation in the last 15 years, the question of the chemical distance has remained mysterious. As observed by Pisztora \cite{pisztora}, the work of M. Aizenman and A. Burchard \cite{aizenman} implies that distances in planar critical percolation are bounded below by a power greater than one of the Euclidean distance, with high probability. Letting $B(n)=[-n,n]^2$, there is an $\epsilon>0$ such that, for any $\kappa>0$,
\begin{equation} \label{boundwithtext}
\mathbf{P}(\exists \text{ an open crossing of } B(n) \text{ with cardinality } \le n^{1+\epsilon}\mid \exists \text{ an open crossing})\le C_\kappa n^{-\kappa}.
\end{equation}
For definiteness, we consider horizontal crossings of $B(n)$. Pisztora treats the ``near-critical'' case, when the percolation parameter $p$ is sufficiently close to $p_c=\frac{1}{2}$ and obtains essentially the same result as long as $n$ is below the correlation length for $p$. H. Kesten and Y. Zhang \cite{kestenzhang} had previously outlined a proof of an estimate analogous to \eqref{boundwithtext} for some fixed $\kappa$, for the size of the \emph{lowest} open crossing in $B(n)$. 

We know of no explicit estimate for $\epsilon$ in \eqref{boundwithtext}. In principle, such an estimate could be obtained from careful examination of the proof in \cite{aizenman}, but the resulting value would be exceedingly small, and it is not likely to correspond to the true typical length of crossings.

In this work we will be concerned with upper, rather than lower bounds for the chemical distance. Conditioned on the existence of a crossing, the obvious approach is to identify a distinguished crossing of $B(n)$ whose size can be estimated. This provides an upper bound for the shortest crossing.

The lowest open crossing of $B(n)$ has a well-known characterization: an edge $e\in B(n)$ lies on the lowest open crossing if and only if it is connected to the left and right sides of $B(n)$ by disjoint open paths, and the dual edge $e^*$ is connected to the bottom side of $B(n)^*$, the dual to $B(n)$. (For precise definitions, see Section \ref{sec: defs}.) G. J. Morrow and Zhang have used this fact to show that if $\tilde{L}_n$ is the size of the lowest open crossing of $B(n)$, then for each positive integer $k$,
\begin{equation}\label{eqn: zhangmorrow}
C_{1,k}n^{2k}(\pi_3(n))^k \le \mathbf{E}\tilde{L}_n^k \le C_{2,k} n^{2k}(\pi_3(n))^k,
\end{equation}
with $\pi_3(n)$ denoting the ``three-arm'' probability (see \eqref{eqn: a3n}). 
On the triangular lattice, the existence and asymptotic value of the three-arm exponent are known \cite{schrammwerner}, and \eqref{eqn: zhangmorrow} becomes
\[\mathbf{E}\tilde{L}_n^k=n^{4k/3+o(1)}.\]
It is natural to ask whether this is also the correct order of magnitude for the shortest crossing of $B(n)$. This question was asked by Kesten and Zhang in \cite{kestenzhang}:
\begin{question}[H. Kesten and Y. Zhang, 1992]
Let $H_n$ be the event that there is an open horizontal crossing of $[-n,n]^2$. Let $\tilde{S}_n$ be the number of edges in the crossing of $[-n,n]^2$ of minimal length. Is it the case that
\begin{equation}\label{eqn: kzquestion}
\tilde{S}_n/ \tilde{L}_n \rightarrow 0,
\end{equation}
\end{question} 
\noindent in probability, conditionally on $H_n$? From \cite[p. 603]{kestenzhang}: ``It is not clear that $\tilde{S}_n/\tilde{L}_n\rightarrow 0$ in probability.'' In this paper, we give a positive answer to this question. (See Corollary \ref{cor: probratio}.)

We present our result on the chemical distance in terms of circuits in annuli. The same proof, with minor modifications, applies to the case of horizontal crossings. Let $A(n) = B(3n)\setminus B(n)$. By Russo-Seymour-Welsh (RSW) \cite{russo, seymourwelsh} estimates, the probability that there is an open circuit around $B(n)$ in $A(n)$ is bounded below by a positive number independent of $n$. Conditioned on the existence of such a circuit, one defines the innermost open circuit $\gamma_n$ as the circuit with minimal interior surrounding $B(n)$ inside $A(n)$. As in the case of the lowest path, one can show that if $L_n$ is the size of $\gamma_n$, then for some $C>0$
\[(1/C) n^2\pi_3(n) \le \mathbf{E}L_n \le C n^2\pi_3(n).\]
Let $S_n$ be the number of edges on the shortest open circuit around $B(n)$ in $A(n)$ (defined to be zero when there is no circuit). Our main result, Theorem \ref{thm: mainthm}, is the following. 
\begin{thm*}
As $n\rightarrow \infty$,
\begin{equation}\label{eqn: probresult} 
\frac{\mathbf{E}S_n}{n^2\pi_3(n)} \rightarrow 0.
\end{equation}
\end{thm*}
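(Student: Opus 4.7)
The strategy is to exhibit an open circuit $\sigma_n$ around $B(n)$ in $A(n)$ whose expected length is $o(n^2 \pi_3(n))$; since $S_n \le |\sigma_n|$ whenever $\sigma_n$ exists (and $S_n \le L_n$ always), this will yield the theorem once one controls the event that the construction of $\sigma_n$ fails. The guiding intuition is that each edge of $\gamma_n$ sits on a three-arm event up to the global scale $n$, while an edge on an alternative, non-innermost open arc need only sit on a three-arm event up to a local scale $m \ll n$. The resulting discrepancy $\pi_3(m)/\pi_3(n)$, multiplied by a suitable geometric factor, produces the desired $o(1)$.

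Concretely, I would choose a mesoscopic scale $m = m(n)$ with $1 \ll m \ll n$, and cover the inner boundary of $A(n)$ by $N = O(n/m)$ overlapping boxes $B_1, \dots, B_N$ of side length $\asymp m$. For each $i$ such that $\gamma_n$ intersects $B_i$, I would build a shortcut $\pi_i$: an open path lying in $2 B_i$, connecting two points of $\gamma_n \cap \partial(2B_i)$, and passing on the outer side of $\gamma_n$ (between $\gamma_n$ and $\partial B(3n)$). Replacing the corresponding arc of $\gamma_n$ by $\pi_i$ preserves the property of surrounding $B(n)$ because $\gamma_n$ is the innermost circuit and $\pi_i$ is on its outer side. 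Applied to such a box, the Morrow--Zhang-type bound underlying \eqref{eqn: zhangmorrow} says the expected length of the locally lowest open crossing of $2B_i$ is at most $C m^2 \pi_3(m)$, so each $\pi_i$ should cost $O(m^2 \pi_3(m))$ in length.

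If shortcuts are available in every visited box, the resulting circuit has expected length at most
\[
\sum_i \mathbf{E}|\pi_i| \;\le\; C \cdot \frac{n}{m} \cdot m^2 \pi_3(m) \;=\; C\, n m\, \pi_3(m),
\]
and dividing by $n^2 \pi_3(n)$ gives $m\pi_3(m)/(n \pi_3(n))$, which tends to $0$ as $m/n \to 0$ because $\pi_3$ decays like a fixed negative power. Thus guaranteed shortcuts would finish the proof.

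The principal obstacle is controlling the probability that shortcuts fail. In a single box, RSW combined with the independence of the region outside $\gamma_n$ (standard when conditioning on the innermost circuit) produces a candidate shortcut with only positive probability $p_0 > 0$, which yields merely a constant-fraction reduction rather than $1 - o(1)$. To amplify the success probability to $1 - o(1)$, one must either iterate the shortcut construction across a dyadic sequence of scales (so that the probability of an arc surviving every round is $(1 - p_0)^{\Theta(\log(n/m))} \to 0$) or construct many independent shortcut candidates within each $B_i$ so that the per-box failure probability becomes polynomially small in $m$. Both routes rest on careful combinations of arm-event decoupling, RSW gluing, and exploitation of the conditional independence between the two sides of $\gamma_n$, and organizing these ingredients into a quantitative, box-by-box replacement scheme is where the bulk of the work lies.
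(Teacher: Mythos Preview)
Your arithmetic has a hole that your own setup exposes. You write that ``if shortcuts are available in every visited box, the resulting circuit has expected length at most $\sum_i \mathbf{E}|\pi_i|$.'' But the circuit you describe is obtained by replacing arcs of $\gamma_n$ with the $\pi_i$; what remains of $\gamma_n$ between consecutive shortcuts (and outside your necklace of boxes altogether) is still there and its length is nowhere bounded. Your boxes sit along $\partial B(n)$, whereas $\gamma_n$ lives in all of $A(n)$; most of its edges need not lie in any $B_i$, so the displayed sum controls only a small part of the candidate circuit. Covering all of $A(n)$ instead gives $\asymp (n/m)^2$ boxes and the bound becomes $n^2\pi_3(m)\gg n^2\pi_3(n)$; covering $\gamma_n$ adaptively gives a random number of boxes with no \emph{a priori} control. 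In short, the gain $m\pi_3(m)/(n\pi_3(n))$ is illusory because it ignores the untreated bulk of $\gamma_n$.

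What is actually needed is a \emph{relative} length comparison: a detour that is short compared to the piece it replaces, together with a guarantee that almost every edge of $\gamma_n$ gets detoured. The paper accomplishes both by working edge-by-edge rather than box-by-box. Around each $e\in\gamma_n$, at each dyadic scale $3^k$, one forces the detour into a sub-annulus of width $\delta\cdot 3^k$ (so its expected length is $\asymp\delta^{1-\alpha}3^{2k}\pi_3(3^k)$ by the same Morrow--Zhang reasoning you cite), while simultaneously forcing the piece of $\gamma_n$ being bypassed to contain $\asymp 3^{2k}\pi_3(3^k)$ three-arm edges in a full-scale box. Choosing $\delta$ small makes the ratio at most $\epsilon$. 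This ``thin versus thick'' comparison is the missing ingredient in your scheme: your shortcut and your detoured arc live at the \emph{same} scale $m$, so there is no mechanism to make one negligible against the other. The logarithmic iteration over scales you mention at the end is indeed the right way to drive the per-edge failure probability to $0$, but it only helps once the single-scale construction already yields a \emph{relative} saving; as written, yours does not.
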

This shows that in an averaged sense, $S_n$ is much shorter than the typical size of $L_n$. The formulation \eqref{eqn: probresult} in terms of circuits in annuli serves as an illustration of the fractal nature of percolation clusters. If macroscopic open paths were smooth, in the sense that they had no small-scale features, one would not expect the shortest circuit to be much shorter than the innermost, since the latter encloses a smaller area.

\subsection{Conjectures in the literature}
Here we make a few brief remarks and give additional references to the literature on the subject of the chemical distance in critical percolation.

 Physicists expect that there exists an exponent $d_{min}$ such that
\begin{equation}
\tilde{S}_n \sim n^{d_{min}},
\end{equation}
where the precise meaning of the equivalence $\sim$ remains to be specified. O. Schramm included the determination of $d_{min}$ in a list of open problems on conformally invariant scaling limits \cite{schrammsurvey}, noting that the question does not ``seem accessible to SLE methods.'' Even the existence claim has so far not been substantiated.
 
Following Schramm and Kesten-Zhang, we have formulated the problem in terms of crossings of large boxes. More generally, $d_{min}$ is predicted to govern the chemical distance between any two points inside the same critical percolation cluster in the sense that if $x,y\in \mathbb{Z}^2$ are connected by an open path and $\|x-y\|_1= n$, then
\begin{equation} \label{eqn: point-to-point}
\mathrm{dist}_{\text{chemical}}(x,y)\sim n^{d_{min}}.
\end{equation}
It follows from the results of Aizenman and Burchard that if $x$ and $y$ are at Euclidean distance of order $n$, then with high probability, the chemical distance between $x$ and $y$ is greater than $n^{\eta}$ for $\eta>1$.  One might expect, based on \eqref{eqn: zhangmorrow}, that the average point-to-point chemical distance can be bounded by $n^2\pi_3(n)$, but this bound does not follow directly from the method of Morrow and Zhang. Our main result and numerical simulations suggest that a sharp upper bound would involve a quantity smaller than $n^2\pi_3(n)$ by a power of $n$.

Simulations have yielded the approximation $d_{min}\approx 1.130\ldots$ \cite{grassberger,hermann-stanley,zydz}. In contrast to other critical exponents, there is no agreement on an exact value for $d_{min}$, and several proposed values seem inconsistent with each other, and with numerical results. See the introduction and bibliography in \cite{poseetal} for a more extensive review of these questions. In that article, the authors use the formula of V. Beffara \cite{beffara} for the dimension of SLE curves
\[d_{\mathrm{SLE}(\kappa)}=\mathrm{min}\left(1+\frac{\kappa}{8},2\right)\]
along with a conjectured value for $d_{min}$ to compare, based on simulations, the behavior of SLE($\kappa$) with the shortest path accross a domain.

\subsection{Outline of the proof}
Our approach is guided by the following simple consideration: given any circuit $L$ in $A(n)$, the event that the innermost circuit $\gamma_n$ in $A(n)$ coincides with $L$ depends only on the edges in $A(n)$ which also lie in the interior and on $\gamma_n$. Fixing any edge $e$ on $L$ which is far from the boundary, RSW estimates imply that in several concentric annuli around $e$, there is a positive probability to find a ``detour'': an open arc lying outside $L$, but with its endpoints on $L$. Given such an arc, we can form a new open circuit in $A(n)$ by replacing a portion of $L$ by the detouring open arc. Provided the resulting curve still surrounds $B(n)$, we obtain a candidate for a circuit which could be shorter than $L$. 

Given the abundance of such detours everywhere on $L$, guaranteed by the logarithmic in $n$ number of scales, we might expect that some of them contain many fewer edges than the portion of $L$ which they circumvent, for typical values of $L$. Indeed, the innermost circuit $\gamma_n$ is constrained to remain ``as close as possible'' to the inner boundary $B(n)$ of the annulus, while the detour paths are merely required to be open. 

The idea is then to construct, for $\epsilon>0$ fixed but arbitrarily small, an open circuit $\sigma_n$, which consists of portions of the innermost open circuit $\gamma_n$ in $A(n)$, with a number of detours attached. The detours are required to have total length smaller than $\epsilon$ times that of the corresponding portions of $\gamma_n$ which they replace. If most of $\gamma_n$ can be covered by detours in this manner, one might hope that $\#\sigma_n \le (\epsilon +o(1))\#\gamma_n$ with high probability.

In trying to implement this basic strategy, we are faced with a number of problems:
\begin{enumerate}
\item Multiple detours around different edges might intersect. A systematic method is needed to keep track of how much of $\gamma_n$ we have replaced by detours.
\item We lack prior knowledge about the size of open paths in the critical cluster. It is thus not obvious that one of the many detours around each edge will have length smaller than $\epsilon$ times that of the detoured path.
\item The orientation and rough geometry of $\gamma_n$ could make it difficult to carry out the percolation estimates required to construct detours. In particular, in our argument, we do not condition on the value $L$ of $\gamma_n$ at any point.
\end{enumerate}

We address the first point by considering ``shielded'' detours: short detours which are also covered by a closed dual arc; see Definition \ref{def: shieldeddetours}. Two shielded detours are either equal or disjoint, and this allows us to estimate the total contribution of the detours to the circuit $\sigma_n$.

To address the second point, we must show that very short shielded detour paths exist with positive probability in every annulus. The only tool that we have to upper bound the length of paths is the result of Morrow and Zhang, which gives asymptotics for the length of the lowest crossing (innermost circuit). We use the fact that the fractal structure of this innermost circuit of an annulus implies that it can be made much smaller than its expected size, by forcing it to lie in a very thin region. This observation, applied to outermost partial circuits within shields, allows us to construct short detours as in Definition~\ref{def: shieldeddetours}, by constraining them to be in thin annuli. As an illustration of this idea, we give the following proposition: 
\begin{prop*}
Let $\tilde{L}_n$ be the length of the lowest horizontal crossing in $[-n,n]^2$. For any $\epsilon>0$, there is $C(\epsilon)>0$ such that
\begin{equation}
\label{eqn: aizenman-sugg}
\mathbf{P}(0<\tilde{L}_n < \epsilon \mathbf{E}\tilde{L}_n \mid  \text{there is an open crossing of } [-n,n]^2) \ge C(\epsilon),
\end{equation}
for all $n$ large enough.
\end{prop*}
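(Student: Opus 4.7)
My plan is to confine the lowest open horizontal crossing of $[-n,n]^2$ to a very thin horizontal strip at the bottom of the box, thereby forcing it to be short because the three-arm points along it all live at the scale of the strip height rather than at scale $n$. Fix $\epsilon>0$, and for a small $c=c(\epsilon)>0$ to be chosen, let $S_c = [-n,n] \times [-n,-n+\lfloor cn \rfloor]$ and let $F_c$ be the event that $S_c$ admits an open horizontal (left-to-right) crossing. The key geometric observation is that any crossing of $S_c$ is a horizontal crossing of $[-n,n]^2$ lying below $y=-n+cn$, so on $F_c$ the lowest horizontal crossing of $[-n,n]^2$ coincides with the lowest horizontal crossing of $S_c$, whose length I denote $\tilde L_n^{(S_c)}$.

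The argument rests on two ingredients. First, since $S_c$ has fixed aspect ratio $2/c$, iterated RSW gives $\mathbf{P}(F_c)\ge p_0(c)>0$ uniformly in $n$. Second, a Morrow--Zhang-type summation applied to the strip yields
\[
\mathbf{E}\tilde L_n^{(S_c)} \;\le\; C\, n \cdot cn \cdot \pi_3(cn),
\]
by summing, over $e\in S_c$, the bound $\mathbf{P}(e \text{ on lowest crossing of } S_c) \lesssim \pi_3(cn)$ that is forced by the vertical extent $cn$ of $S_c$. Combining this with quasi-multiplicativity and a polynomial bound of the form $\pi_3(cn,n) \gtrsim c^{\beta}$ for some $\beta<1$ (as follows from the Aizenman--Burchard-type lower bound $\mathbf{E}\tilde L_n \gtrsim n^{1+\delta}$ recalled at \eqref{boundwithtext}), one obtains
\[
\frac{\mathbf{E}\tilde L_n^{(S_c)}}{\mathbf{E}\tilde L_n} \;\lesssim\; \frac{c}{\pi_3(cn,n)} \;\longrightarrow\; 0 \qquad (c \to 0).
\]
Choosing $c(\epsilon)$ small enough that this ratio is at most $(\epsilon/2)\,p_0(c)$, Markov's inequality gives $\mathbf{P}(\tilde L_n^{(S_c)} \ge \epsilon\,\mathbf{E}\tilde L_n) \le p_0(c)/2$, and therefore
\[
\mathbf{P}\bigl(F_c \cap \{\tilde L_n^{(S_c)} < \epsilon\,\mathbf{E}\tilde L_n\}\bigr) \;\ge\; \mathbf{P}(F_c) - p_0(c)/2 \;\ge\; p_0(c)/2 \;=:\; C(\epsilon).
\]
On this event $0<\tilde L_n=\tilde L_n^{(S_c)}<\epsilon\,\mathbf{E}\tilde L_n$, and since the probability of a horizontal crossing of $[-n,n]^2$ is bounded below uniformly in $n$, the stated conditional lower bound follows.

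The delicate step is the quantitative calibration in the choice of $c(\epsilon)$: both the ``gain'' $c/\pi_3(cn,n)$ in length and the ``cost'' $p_0(c)$ of forcing the crossing into the narrow strip scale polynomially in $c$, so the argument only closes once a sharp-enough quantitative RSW lower bound $p_0(c) \gtrsim c^{\gamma}$ is paired with a three-arm lower bound $\pi_3(cn,n) \gtrsim c^{\beta}$ satisfying $\gamma+\beta<1$. Establishing this dominance cleanly is the main technical point; the remaining steps are routine bookkeeping with the arm-event estimates recalled in the introduction.
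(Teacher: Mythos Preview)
Your overall strategy---confine the lowest crossing to a thin strip of height $cn$, use a Morrow--Zhang count to bound its expected length, and compare to $\mathbf{E}\tilde L_n$ via $\pi_3(cn,n)$---is exactly the paper's sketch. You also correctly isolate the one real issue: after Markov, you must subtract $\mathbf{P}(\tilde L_n^{(S_c)}\ge \epsilon\,\mathbf{E}\tilde L_n)$ from $\mathbf{P}(F_c)$, and this is only positive if the former is small relative to the latter.

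Your proposed resolution, however, does not work. The probability $p_0(c)$ of horizontally crossing a $2n\times cn$ rectangle is \emph{not} polynomially small in $c$: tiling the strip into order $1/c$ disjoint $3cn\times cn$ blocks and noting that each independently contains a vertical closed dual crossing with uniformly positive probability gives $p_0(c)\le (1-q)^{C/c}$, an exponential decay in $1/c$. So no exponent $\gamma$ with $\gamma+\beta<1$ is available, and the calibration you describe cannot close.

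The fix (this is what the detailed argument in Lemma~\ref{lma: thin} does in the annulus setting) is to bound the \emph{conditional} expectation $\mathbf{E}[\tilde L_n^{(S_c)}\mid F_c]$ by $C\,cn\cdot n\,\pi_3(cn)$ with $C$ independent of $c$. For an edge $e$ in the strip, $\{e\in l_n^{(S_c)}\}$ implies $A_3(e,cn)$ together with open crossings of the two ``truncated'' half-strips $[-n,x(e)-cn]\times[-n,-n+cn]$ and $[x(e)+cn,n]\times[-n,-n+cn]$; these three events use disjoint edge sets, so
\[
\mathbf{P}(e\in l_n^{(S_c)})\ \le\ \pi_3(cn)\,\mathbf{P}(\text{left})\,\mathbf{P}(\text{right}).
\]
On the other hand, by FKG and an RSW gluing across the $2cn\times cn$ gap near $e$ (a fixed aspect ratio, hence a universal constant), $\mathbf{P}(F_c)\ge C\,\mathbf{P}(\text{left})\,\mathbf{P}(\text{right})$. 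Dividing cancels the long-range crossing probabilities and yields $\mathbf{P}(e\in l_n^{(S_c)}\mid F_c)\le C'\pi_3(cn)$ uniformly in $c$. Summing and applying conditional Markov then gives $\mathbf{P}(\tilde L_n^{(S_c)}<\epsilon\,\mathbf{E}\tilde L_n\mid F_c)\ge 1/2$ once $c$ is chosen so that $C''c^{1-\beta}\le \epsilon/2$; multiplying by $\mathbf{P}(F_c)=p_0(c)>0$ finishes the proof with $C(\epsilon)=p_0(c(\epsilon))/2$. No competition between $p_0(c)$ and the length gain ever arises.
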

\begin{proof}[Sketch of proof]
We only provide an outline of the proof here. For a more detailed argument, see the proof of Lemma \ref{lma: thin}. The size of the lowest open crossing of $[-n,n]\times [-n,-(1-\alpha)n]$ is of order $\alpha n^2\pi_3(\alpha n)$. Using quasimultiplicativity \cite[Proposition 12.2]{nolin}, and the fact that the three arm-exponent is $<1$, this is smaller than $\alpha^{1-\eta}n^2\pi_3(n)$ for some $\eta<1$. Choosing $\alpha$ small enough, the result follows.
\end{proof}

Rather than attempting to construct detours conditioned on the innermost circuit, and showing (uniformly in this conditioning) that most of $\gamma_n$ can be covered by detours, we show in Section \ref{eqn: Svest} (see equation \eqref{eqn: decay}), that for most edges $e\in \gamma_n$, the probability of $e$ having no shielded detour around it is small, conditioned on $e$ lying on $\gamma_n$:
\begin{equation}
\label{eqn: po1}
\limsup_{n\rightarrow \infty} \mathbf{P}(\text{no detour around } e \mid e\in \gamma_n) = 0,
\end{equation}
for edges $e$ away from the boundary of $A(n)$ and $\epsilon>0$ arbitrary.

We then estimate $S_n$ by considering separately the contributions to $\# \sigma_n$ of the union $\Pi$ of all the short detours, and the edges on $\gamma_n \setminus \hat \Pi$, where $\hat Pi$ is the union of the ``detoured'' portions of the innermost circuit:
\begin{align*}
\mathbf{E}S_n &\le \mathbf{E}\#\Pi  + \mathbf{E}\#(\gamma_n\setminus \hat \Pi)\\
&\le \epsilon \mathbf{E} \#\gamma_n + \mathbf{E}\#\{e\in \gamma_n : \text{there is no detour around } e\}\\
&\le \epsilon \mathbf{E} \#\gamma_n + \sum_e\mathbf{P}(\text{no detour around } e \mid e\in \gamma_n)\mathbf{P}(e\in \gamma_n).
\end{align*}
Using \eqref{eqn: po1}, this gives
\begin{equation}
\label{eqn: expectation}
\mathbf{E}S_n \le (\epsilon+o(1))\cdot \mathbf{E}\#\gamma_n.
\end{equation}

The proof of Corollary \ref{cor: lowestcros} concerning the expected size of the lowest crossing is identical to the argument for the innermost circuit. To obtain the statement of convergence in probability in Corollary \ref{cor: probratio}, we need an additional argument. Essentially, it remains to prove that the lowest crossing of $[-n,n]^2$ cannot be smaller than $o(1)\mathbf{E}\tilde{L}_n$ with positive probability. The basic idea for our proof comes from Kesten's lower bound for the number of pivotals in a box \cite[(2.46)]{kestencrit}, but the requirement to find a large (of order $n^2\pi_3(n)$) points rather than one at each scale introduces substantial new technical difficulties. See Section \ref{sec: lowertail}.

For clarity, we have ignored the edges very close to the boundary in this rough sketch of our proof; for such edges, no estimate like \eqref{eqn: po1} holds.

To obtain the estimate \eqref{eqn: po1}, we define a sequence $E_k(e)$, $k\ge 1$ of events which depend on edges inside concentric annuli around $e$, and whose occurrence implies the existence of a shielded detour (in the sense of Definition \ref{def: shieldeddetours}) if $e\in \gamma_n$. The definition and construction of $E_k(e)$ are given in Section \ref{sec: eksec}, where it is also proved that
\begin{equation}
\label{eqn: eknaked}
\mathbf{P}(E_k(e))\ge c_1
\end{equation}
uniformly in $k\ge k_0$ for some $c_1>0$. A schematic representation of the event $E_k(e)$ appears in Figure \ref{consopic}; see also the accompanying description at the beginning of Section \ref{sec: eksec}. We use closed dual circuits with defects to force the lowest crossing to traverse certain regions inside the annulus where $E_k(e)$ is defined, regardless of the ``local orientation'' of the innermost circuit outside. To connect the innermost circuit, the detour path and its shielding closed dual path, we use five-arm points (see Section \ref{sec: shielded}), avoiding any conditioning on the realization of the lowest path.

To pass from \eqref{eqn: eknaked} to \eqref{eqn: po1}, we show in Section \ref{sec: conditioning3} that
\begin{enumerate}
\item The estimate \eqref{eqn: eknaked} remains true (with a different, but still $n$-independent constant) when we condition on $e$ lying in the innermost circuit. See \eqref{eqn: condek} in Section \ref{sec: eksec} and Proposition \ref{prop: switch}.
\item Although the $E_k(e)$'s are no longer independent under the conditional measure $\mathbf{P}(\cdot\mid e\in \gamma_n)$, the dependence is weak enough to obtain an estimate on the event that none of the $E_k$'s occur; see Proposition \ref{eqn: a3separation}. Here we use arm separation tools which appeared in \cite{DS} (which we state as Lemma \ref{lem: DS}).
\end{enumerate} 

\section{Notation and results}\label{sec: defs}
On the square lattice $(\mathbb{Z}^2,\mathcal{E}^2)$, let $\mathbf{P}$ be the critical bond percolation measure $\prod_{e \in \mathcal{E}^2} \frac{1}{2}(\delta_0 + \delta_1)$ on $\Omega = \{0,1\}^{\mathcal{E}^2}$. 

A lattice path is a sequence $v_0,e_1,v_1, \ldots, v_{N-1},e_N,v_N$ such that for all $k=1, \ldots, N$, $\|v_{k-1}-v_k\|_1=1$ and $e_k = \{v_{k-1},v_k\}$. A circuit is a path with $v_0=v_N$. For such paths we denote $\# \gamma = N$, the number of edges in $\gamma$. If $V \subset \mathbb{Z}^2$ then we say that $\gamma \in V$ if $v_k\in V$ for $k=0, \ldots, N$. 

A path $\gamma$ is said to be (vertex) self-avoiding if $v_i=v_j$ implies $i=j$ and a circuit is (vertex) self-avoiding if $v_i=v_j$ implies $i=j$ whenever $0 \notin \{i,j\}$. Given $\omega \in \Omega$, we say that $\gamma$ is open in $\omega$ if $\omega(e_k)=1$ for $k=1, \ldots, N$. Any self-avoiding circuit $\gamma$ can be viewed as a Jordan curve and therefore has an interior $\text{int }\gamma$ and exterior $\text{ext }\gamma$ (component of the complement that is unbounded). In this way, $\mathbb{Z}^2$ is the disjoint union $\text{int } \gamma  \cup \text{ext }\gamma \cup \gamma$. We say a self-avoiding circuit surrounds a vertex $v$ if $v \in \text{int }\gamma$.

The dual lattice is written $((\mathbb{Z}^2)^*,(\mathcal{E}^2)^*)$, where $(\mathbb{Z}^2)^* = \mathbb{Z}^2 + (1/2)(\mathbf{e}_1+\mathbf{e}_2)$ with its nearest-neighbor edges. Here, we have denoted by $\mathbf{e}_i$ the coordinate vectors:
\[ \mathbf{e}_1 = (1,0), \ \mathbf{e}_2 = (0,1).\]
Given $\omega \in \Omega$, we obtain $\omega^* \in \Omega^* = \{0,1\}^{(\mathcal{E}^2)^*}$ by the relation $\omega^*(e^*) = \omega(e)$, where $e^*$ is the dual edge that shares a midpoint with $e$. We blur the distinction between $\omega$ and $\omega^*$ and say, for example, that $e^*$ is open in $\omega$. For any $V \subset \mathbb{Z}^2$ we write $V^* \subset (\mathbb{Z}^2)^*$ for $V + (1/2)(\mathbf{e}_1+\mathbf{e}_2)$. For two subsets $X$ and $Y$ of the plane, we denote by dist$(X,Y)$ the Euclidean distance from $X$ to $Y$.

The symbols $C$, $c$ will denote positive constants whose value may change between occurrences, but is independent of any parameters. Dependence on parameters is indicated by an argument, as in $C(\alpha)$, and we have numbered some recurring constants using subscript for clarity.

\subsection{Circuits in annuli}

For $n \geq 1$, let $B(n)$ be the box of side-length $2n$,
\[
B(n) = \{x \in \mathbb{Z}^2 : \|x\|_\infty \leq n\} \text{ for } n \geq 1\ ,
\]
and $A(n)$ the annulus 
\[
A(n) = B(3n) \setminus B(n)\ .
\]
For $n \geq 1$, let $\partial B(n) = \{x \in \mathbb{Z}^2 : \|x\|_\infty = n\}$. 

Let $\mathcal{C}(n)$ be the collection of all self-avoiding circuits in $A(n)$ that surround the origin and, given $\omega$, let $\Xi(n) = \Xi(n)(\omega)$ be the sub-collection of $\mathcal{C}(n)$ of open circuits.

We will be interested in the event
\[
\Omega_n = \{\Xi(n) \neq \emptyset\}\ ,
\]
which we know has $0 < \inf_n \mathbf{P}(\Omega_n) \leq \sup_n \mathbf{P}(\Omega_n)<1$ by RSW arguments \cite{russo, seymourwelsh}. On $\Omega_n$ we may define $\gamma_n$, the innermost element of $\Xi(n)$, as the unique $\gamma \in \Xi(n)$ which has $\text{int }\gamma \subset \text{int }\sigma$ for all $\sigma \in \Xi(n)$. This allows us to define the random variable
\[
L_n = L_n(\omega) = \begin{cases}
\# \gamma_n &\text{ for } \omega \in \Omega_n \\
0 & \text{ for } \omega \notin \Omega_n
\end{cases}\ .
\]
This is the length of the innermost open circuit. 

The expected length of the innermost open circuit can be estimated using arm events. Let $A_3(n)$ be the ``three-arm'' event that 
\begin{enumerate}
\item The edge $(0,\mathbf{e}_1)$ is connected to $\partial B(n)$ by two open vertex disjoint paths and
\item $(1/2)(\mathbf{e}_1-\mathbf{e}_2)$ is connected to $\partial B(n)^*$ by a closed dual path.
\end{enumerate}
In later sections, we use arm events centered at vertices other than the origin. We define them now. For $v\in \mathbb{Z}^2$, $A_3(v,n)$ denotes the event that $A_3(n)$ occurs in the configuration $\omega$ shifted by $-v$. For an edge $e=(v_1,v_2)\in\mathcal{E}^2$, $A_3(e,n)$ denotes the event that
\begin{enumerate}
\item $e$ is connected to $\partial B(e,n) := \partial B((v_1+v_2)/2,n)$ by two disjoint open paths and
\item The dual edge $e^*$ is connected to $\partial B((v_1+v_2)/2,n)^*$ by a closed dual path.
\end{enumerate}
In item 2, we view the boundary as a subset of $\mathbb{R}^2$ and say that $e^*$ is connected to it if there is a closed dual path from $e^*$ which (when viewed as a subset of $\mathbb{R}^2$, touches it.

Denoting
\begin{equation}\label{eqn: a3n}
\pi_3(n) = \mathbf{P}(A_3(n)),
\end{equation}
we have the following simple adaptation of the result of Morrow and Zhang:
\begin{thm*}\label{thm: MZ}
There exist $C_1,C_2>0$ such that
\begin{equation}\label{eqn: MZestimate}
C_1 n^2 \pi_3(n) \leq \mathbf{E}L_n \leq C_2 n^2 \pi_3(n) \text{ for all } n \geq 1\ .
\end{equation}
\end{thm*}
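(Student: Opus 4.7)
The approach I would take mirrors the Morrow--Zhang proof for the lowest crossing, with the inner boundary $\partial B(n)$ of the annulus playing the role of the bottom side of the box. First I would write
\[
\mathbf{E} L_n = \sum_{e \in A(n)} \mathbf{P}(e \in \gamma_n),
\]
reducing the problem to an edge-by-edge estimate. The essential characterization is that $e \in \gamma_n$ iff $e$ is open, its endpoints are joined inside $A(n)$ by two vertex-disjoint open paths that together with $e$ form a circuit around the origin, and $e^*$ is joined to $\partial B(n)^*$ by a closed dual path. The closed dual arm is precisely what prevents an open circuit from being nested strictly between $\partial B(n)$ and $\gamma_n$, so $\{e \in \gamma_n\}$ takes the form of a three-arm event with specified arm destinations.

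For the upper bound, set $r_e = \dist(e, \partial B(n))$. Inside $B(e, r_e)$ the event $\{e \in \gamma_n\}$ forces the unrestricted polychromatic three-arm event, contributing a factor $\pi_3(r_e)$. Outside $B(e, r_e)$ the closed dual arm must reach all the way to $\partial B(n)^*$, while the two open arms must still produce a macroscopic circuit around the origin. A standard separation-of-arms and gluing argument, combined with the half-plane three-arm exponent being equal to $2$, would yield an extension cost of $C(r_e/n)^2$, giving
\[
\mathbf{P}(e \in \gamma_n) \leq C \pi_3(r_e) (r_e/n)^2.
\]
Summing over edges grouped by their distance $r$ from $\partial B(n)$ (with $O(n)$ edges at each $r$) and invoking the almost-monotonicity $r^2 \pi_3(r) \leq C R^2 \pi_3(R)$ for $r \leq R$, which follows from quasi-multiplicativity and the three-arm exponent being strictly less than $2$ (cf. \cite{nolin}), would produce
\[
\mathbf{E} L_n \leq \frac{C}{n} \sum_{r=1}^{4n} r^2 \pi_3(r) \leq C n^2 \pi_3(n).
\]

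For the matching lower bound I would restrict attention to bulk edges, say $e$ in the sub-annulus $B(5n/2) \setminus B(3n/2)$, for which $r_e \sim n$ and $\dist(e, \partial B(3n)) \sim n$; there are $\Theta(n^2)$ such edges. For each of these, a direct RSW construction places two disjoint open arms that extend into macroscopic lanes and close up into a circuit around $0$, together with a closed dual arm from $e^*$ to $\partial B(n)^*$, all with positive conditional probability given the local three-arm event at $e$. This would give $\mathbf{P}(e \in \gamma_n) \geq c \pi_3(n)$ uniformly in such $e$, and summing produces $\mathbf{E} L_n \geq c n^2 \pi_3(n)$.

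The step I expect to be the main obstacle is the extension estimate $(r_e/n)^2$ in the upper bound: one needs to show that, conditional on the local three-arm event, the two open arms can still form a macroscopic circuit around the origin \emph{while} the dual closed arm is constrained to reach $\partial B(n)^*$. The half-plane three-arm exponent appears because, from the perspective of an annular shell near $\partial B(n)$, the long dual closed arm behaves like a half-plane boundary, exactly as the bottom side of the box does in the Morrow--Zhang setting. Without this sharpness the sum $n^{-1} \sum_r r^2 \pi_3(r)$ would not close up at $n^2 \pi_3(n)$, since the full-plane three-arm exponent alone is strictly smaller than $2$.
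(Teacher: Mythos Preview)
The paper does not actually prove this theorem; it records it as ``a simple adaptation of the result of Morrow and Zhang'' and moves on. Your proposal is precisely such an adaptation, and the overall strategy---writing $\mathbf{E}L_n$ as a sum of one-edge probabilities, reading $\{e\in\gamma_n\}$ as a three-arm event with the closed dual arm landing on $\partial B(n)^*$, then splitting into a local full-plane $\pi_3$ factor and a half-plane extension---is the right one and matches what the paper has in mind.

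One refinement: your parameter $r_e=\dist(e,\partial B(n))$ only captures the geometry correctly for edges near the inner boundary. For edges near $\partial B(3n)$, the local three-arm event does not reach to distance $r_e$ (which can be close to $2n$); rather, it reaches to $\dist(e,\partial B(3n))$, after which the open arms are constrained by the outer boundary and the closed dual arm still has to get all the way to $\partial B(n)^*$. One usually handles this by taking $r_e=\dist(e,\partial A(n))$, treating the two boundary layers symmetrically with a half-plane three-arm extension at whichever boundary is closer, and handling corners separately. The summation and the almost-monotonicity of $r^2\pi_3(r)$ then go through exactly as you wrote. This is a bookkeeping point, not a gap in the method; with that correction, your sketch is sound and is essentially the proof the paper is invoking.
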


The characterization of the innermost circuit (based on Morrow and Zhang) we will use throughout the paper is as follows. An edge $e \subset A(n)$ is in the innermost circuit if and only if the following occurs: $e^*$ is connected to $B(n)^*$ by a closed dual path, and $e$ is in an open circuit surrounding $B(n)$ such that if we remove $e$ from this circuit, then it becomes a self-avoiding path (it is no longer a closed curve). One way to say this is that $e$ has three disjoint arms (two open and one closed), with the closed arm connected to $B(n)^*$ and the open ones connecting into a circuit around $B(n)$.

We can further define the length of the shortest open circuit. That is, set
\[
S_n = S_n(\omega) = \begin{cases}
\min \{\# \gamma : \gamma \in \Xi(n)\} & \text{ for } \omega \in \Omega_n \\
0 & \text{ for } \omega \notin \Omega_n
\end{cases}\ .
\]

Our main result for circuits is
\begin{thm}\label{thm: mainthm}
As $n\to \infty$,
\begin{equation}\label{eq: to_show}
\mathbf{E}S_n = o(n^2 \pi_3(n))\ .
\end{equation}
\end{thm}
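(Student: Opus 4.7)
The plan is to produce, for every fixed $\epsilon > 0$, a random open circuit $\sigma_n \in \Xi(n)$ on $\Omega_n$ whose expected length is at most $(\epsilon + o(1)) \mathbf{E}L_n$. Combined with the upper bound $\mathbf{E}L_n \le C_2 n^2 \pi_3(n)$ from \eqref{eqn: MZestimate}, this will give $\mathbf{E}S_n \le \mathbf{E}\#\sigma_n \le (C_2 \epsilon + o(1)) n^2 \pi_3(n)$; sending $\epsilon \downarrow 0$ along a diagonal subsequence then yields \eqref{eq: to_show}. The circuit $\sigma_n$ will be built from the innermost open circuit $\gamma_n$ by swapping subarcs for \emph{shielded detours}: open arcs lying outside $\gamma_n$, with endpoints on $\gamma_n$, surrounded by a closed dual arc (the ``shield'') which forces any two such detours to be identical or disjoint. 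For each edge $e \in \gamma_n$ that admits such a detour, I replace the subarc of $\gamma_n$ between the detour's endpoints (which contains $e$) by the detour itself. Disjointness of the shields prevents double counting and ensures that $\sigma_n$ is still an open circuit in $\Xi(n)$.

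The quantitative engine is a per-edge detour event. For each edge $e$ far from $\partial A(n)$ and each dyadic scale $k$ with $2^k \ll \mathrm{dist}(e, \partial A(n))$, I will define an event $E_k(e)$ depending only on edges in a concentric annulus of radii $\sim 2^k$--$2^{k+C}$ around $e$ whose occurrence, together with $e \in \gamma_n$, forces the existence of a shielded detour of length at most $\epsilon$ times that of the replaced subarc of $\gamma_n$ (on average). The crucial input is the thin-strip principle illustrated in the proposition sketched in the introduction: the lowest crossing of a strip of aspect ratio $\alpha$ has expected length of order $\alpha n^2 \pi_3(\alpha n)$, which by quasimultiplicativity and the fact that the polychromatic three-arm exponent $a$ is strictly less than $1$ is bounded by $\alpha^{1-a} n^2 \pi_3(n)$. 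Routing the detour as the outermost open partial circuit through a thin concentric subannulus inside the shield thus makes it anomalously short relative to the subarc of $\gamma_n$ it replaces. A standard RSW construction together with five-arm gluing at the detour's endpoints (so that the detour joins $\gamma_n$ without conditioning on the realization of $\gamma_n$) then yields the unconditional lower bound $\mathbf{P}(E_k(e)) \ge c_1 > 0$, uniformly in $e$ and $k$.

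The main obstacle is that what I actually need is $\mathbf{P}(\text{no detour around } e \mid e \in \gamma_n) = o(1)$, and $\{e \in \gamma_n\}$ is itself essentially a three-arm event around $e$ (two open arms closing into a circuit around $B(n)$ and one closed dual arm to $B(n)^*$), so it is not independent of the $E_k(e)$'s. I expect two ingredients to handle this. First, a one-scale ``switching'' argument should transfer the unconditional bound to the conditional one, yielding $\mathbf{P}(E_k(e) \mid e \in \gamma_n) \ge c_1' > 0$ uniformly. Second, although the $E_k(e)$ are no longer independent across scales under the conditional measure, arm-separation technology in the spirit of \cite{DS} should provide enough quasi-independence between well-separated scales to conclude
\[
\mathbf{P}\Bigl(\bigcap_{k_0 \le k \le k_1} E_k(e)^c \,\Big|\, e \in \gamma_n\Bigr) \to 0 \quad \text{as } k_1 - k_0 \to \infty.
\]
The technical crux is designing $E_k(e)$ so that the five-arm gluing at its detour endpoints is compatible with arm separation, and so that the shield does not pin down the local geometry of $\gamma_n$ near $e$ too rigidly.

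With these ingredients, let $\hat\Pi$ denote the union of subarcs of $\gamma_n$ replaced by chosen detours and $\Pi$ the union of the detours themselves, so $\sigma_n := (\gamma_n \setminus \hat\Pi) \cup \Pi \in \Xi(n)$. Disjointness of shielded detours together with the per-detour length control gives $\mathbf{E}\#\Pi \le \epsilon \cdot \mathbf{E}\#\hat\Pi \le \epsilon \mathbf{E}L_n$, while
\[
\mathbf{E}\#(\gamma_n \setminus \hat\Pi) \le \sum_{e \subset A(n)} \mathbf{P}(\text{no detour around } e \mid e \in \gamma_n)\, \mathbf{P}(e \in \gamma_n).
\]
For edges $e$ bounded away from $\partial A(n)$ the conditional probability is $o(1)$ by the scale-mixing step above; the remaining edges, in a sliver of width $o(n)$ near $\partial A(n)$, contribute only $o(n^2 \pi_3(n))$ by a direct Morrow--Zhang type boundary estimate. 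Summing gives $\mathbf{E}\#(\gamma_n \setminus \hat\Pi) = o(\mathbf{E}L_n)$, and hence $\mathbf{E}S_n \le (\epsilon + o(1)) \mathbf{E}L_n$, as required.
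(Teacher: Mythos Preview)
Your proposal is correct and follows essentially the same architecture as the paper's proof: shielded detours (Definition~\ref{def: shieldeddetours}), the thin-annulus mechanism to make detours short (Lemma~\ref{lma: thin}), five-arm gluing at the detour endpoints (Section~\ref{sec: shielded}), the switch from conditioning on $\{e\in\gamma_n\}$ to conditioning on a three-arm event (Proposition~\ref{prop: switch}), and quasi-independence across scales via the arm-separation lemma of \cite{DS} (Lemma~\ref{lem: DS}). Two minor points where the paper is more specific than your sketch: the per-detour length bound $\#\pi(e)\le\epsilon\,\#\hat\pi(e)$ is enforced \emph{pointwise} (condition~5 of Definition~\ref{def: shieldeddetours}), not merely on average; and the boundary sliver is taken of width $n^{C_4/2}$ (with $C_4$ the Aizenman--Burchard exponent from Lemma~\ref{lem: AB}) and handled by a crude edge count, $30n^{1+C_4/2}=o(n^{1+C_4})\le o(\mathbf{E}L_n)$, rather than a Morrow--Zhang estimate.
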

\subsection{The lowest crossing and the question of Kesten-Zhang}
The proof of Theorem \ref{thm: mainthm} applies equally well to the length $\tilde{L}_n$ of the lowest crossing of $B(n)$:
\begin{cor} \label{cor: lowestcros}
Let $\tilde{S}_n$ be the minimal number of edges in any open horizontal crossing of $B(n)$ ($\tilde{S}_n=0$ if there is no such crossing). Then
\begin{equation}
\mathbf{E}\tilde{S}_n =o(n^2\pi_3(n))\ . 
\end{equation}
\end{cor}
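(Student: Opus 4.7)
The plan is to reproduce the proof of Theorem \ref{thm: mainthm} essentially verbatim, with the innermost circuit $\gamma_n$ in $A(n)$ replaced by the lowest horizontal open crossing $\tilde{\gamma}_n$ of $B(n)$. The Morrow--Zhang characterization adapts with no change beyond rerouting the closed dual arm: an edge $e \subset B(n)$ lies on $\tilde{\gamma}_n$ iff $e$ is connected to the left and right sides of $B(n)$ by two disjoint open arms, and $e^*$ is connected to the bottom of $B(n)^*$ by a closed dual path. In particular the Morrow--Zhang bound $\mathbf{E}\tilde{L}_n \asymp n^2 \pi_3(n)$ holds, and the identity $\mathbf{E}\tilde{L}_n = \sum_e \mathbf{P}(e \in \tilde{\gamma}_n)$ reduces edge by edge to a three-arm estimate, just as for $L_n$.

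For an edge $e$ whose $\ell^\infty$-distance to the bottom, left, and right sides of $B(n)$ is at least $\delta n$, the events $E_k(e)$ of Section \ref{sec: eksec} and the shielded detours of Definition \ref{def: shieldeddetours} apply without modification. The construction of $E_k(e)$ lives inside concentric dyadic annuli of radius $2^k$ around $e$, and as long as $2^k \ll \delta n$ these annuli sit in the bulk of $B(n)$ and are oblivious to the boundary. If $E_k(e)$ occurs and $e \in \tilde{\gamma}_n$, the event produces a short open arc lying above $\tilde{\gamma}_n$ with endpoints on $\tilde{\gamma}_n$, shielded from below by a closed dual arc; the shield attaches to the closed dual arm running from $e^*$ down to the bottom of $B(n)^*$ via the same five-arm point construction used in the annulus case. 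All the quantitative estimates of Sections \ref{sec: eksec} and \ref{sec: conditioning3}, namely the unconditional lower bound $\mathbf{P}(E_k(e)) \geq c_1$, its conditional analogue $\mathbf{P}(E_k(e) \mid e \in \tilde{\gamma}_n) \geq c_1'$, and the decay of the probability that none of the $E_k(e)$ occur, then transfer verbatim. Grafting shielded detours onto $\tilde{\gamma}_n$ yields a short open crossing $\tilde{\sigma}_n$ with $\mathbf{E}\,\#\{e\in\tilde{\sigma}_n : \dist(e,\partial B(n)) \geq \delta n\} \leq (\eps + o(1))\,\mathbf{E}\tilde{L}_n$ for any fixed $\eps > 0$.

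The only genuinely new ingredient is controlling the edges of $\tilde{\gamma}_n$ within distance $\delta n$ of one of the three sides of $\partial B(n)$; for these, no $E_k(e)$ can be constructed at the relevant scales. Since $\tilde{S}_n \leq \tilde{L}_n$ deterministically, it suffices to bound the expected number $\mathbf{E}\tilde{L}_n^{\mathrm{bdry}}$ of such boundary edges on $\tilde{\gamma}_n$. Using the three-arm identity $\mathbf{E}\tilde{L}_n^{\mathrm{bdry}} = \sum_e \mathbf{P}(e \in \tilde{\gamma}_n)$, quasimultiplicativity \cite[Proposition 12.2]{nolin}, and the fact that the half-plane three-arm exponent is strictly larger than the bulk three-arm exponent, one obtains $\mathbf{E}\tilde{L}_n^{\mathrm{bdry}} \leq C\, \delta^{\alpha}\, n^2 \pi_3(n)$ for some $\alpha > 0$ independent of $\delta$. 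Choosing $\delta = \delta(\eps)$ small, then sending $\eps \to 0$, yields $\mathbf{E}\tilde{S}_n = o(n^2 \pi_3(n))$.

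The main obstacle is essentially bookkeeping: one must verify that every step where the annulus argument exploits the two-sided geometry (detour outside $\gamma_n$, shield still further outside) carries over to the one-sided crossing picture (detour above $\tilde{\gamma}_n$, shield still further above), and that the five-arm point construction used to glue detour, shield, and path together does not interact with the top side of $B(n)$. Keeping the scales $2^k$ below $\delta n$ reduces the entire construction to a half-plane-like region; no new percolation inputs beyond the boundary three-arm estimate above are required.
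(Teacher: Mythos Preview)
Your overall strategy---mirror the annulus proof, replacing the innermost circuit by the lowest crossing---is exactly what the paper does; indeed the paper states that ``the proof of Theorem~\ref{thm: mainthm} applies equally well'' and in Section~\ref{sec: ESn} simply repeats the construction to produce $\tilde\sigma_n$.

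Where you diverge is in the boundary treatment, and there you have made life harder than necessary. The paper does \emph{not} strip off a region of linear width $\delta n$; it strips off a region of width $n^{C_4/2}$ (the analogue of $\hat A(n)$). This is the whole point of Lemma~\ref{lem: AB}: since $\mathbf{E}\tilde L_n \ge C_3 n^{1+C_4}$, the crude bound ``count every edge in the boundary strip'' already gives $O(n^{1+C_4/2}) = o(n^2\pi_3(n))$, with no arm estimates needed at all. The range of scales $k \le \lfloor (C_4/8)\log n\rfloor$ for the events $E_k$ is chosen precisely so that the annuli of radius $3^k$ fit inside this sublinear buffer. Your linear strip $\delta n$ forces you into the refined estimate $\mathbf{E}\tilde L_n^{\mathrm{bdry}} \le C\delta^\alpha n^2\pi_3(n)$, which is avoidable.

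Moreover, your justification for that refined estimate is not right as stated. For edges $e$ near the \emph{bottom} of $B(n)$, the event $\{e\in\tilde\gamma_n\}$ does not imply a half-plane three-arm event: the closed dual arm to the bottom is short, and only the two open arms are long. The half-plane three-arm exponent is irrelevant here. The bound you want does hold, but it comes from summing $\pi_3(h)$ over heights $h\le\delta n$ and using quasimultiplicativity together with Lemma~\ref{lem: AB} (giving a factor $\delta^{C_4}$); near the left and right sides one uses the half-plane \emph{two}-arm (polychromatic) exponent, not three-arm. So the claim is salvageable but the argument needs to be rewritten---or, more simply, replaced by the paper's sublinear strip.
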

To address the question of Kesten and Zhang stated in the introduction and obtain the  result \eqref{eqn: probresult} on convergence in probability, we combine the preceding corollary with \eqref{eqn: MZestimate} (the version for $\tilde{L}_n$ in place of $L_n$) and an auxiliary estimate for the lower tail of $\tilde{L}_n$ (see Section \ref{sec: lowertail}). Let $H_n$ be the event that there is an open horizontal crossing of $B(n)$, and $\tilde{L}_n$ the number of edges in the lowest open crossing of $B(n)$ (with $\tilde{L}_n=0$ on $H_n^c$).
\begin{cor}\label{cor: probratio}
Conditionally on $H_n$, we have the convergence in probability:
\begin{equation}\label{eqn: probratio}
\tilde{S}_n/\tilde{L}_n \rightarrow 0\ .
\end{equation}
\end{cor}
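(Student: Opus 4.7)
The plan is to combine three ingredients: (i) Corollary~\ref{cor: lowestcros}, which gives $\mathbf{E}\tilde{S}_n = o(n^2\pi_3(n))$; (ii) a matching lower-tail estimate for the lowest crossing (from Section~\ref{sec: lowertail}) asserting that $\tilde{L}_n$ is unlikely to be much smaller than $n^2\pi_3(n)$ on $H_n$; and (iii) the RSW lower bound $\inf_n \mathbf{P}(H_n) > 0$. Since $\tilde{S}_n \leq \tilde{L}_n$, the convergence in probability~\eqref{eqn: probratio} will then follow from a routine Markov argument.

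First, I would apply Markov's inequality together with Corollary~\ref{cor: lowestcros}: for every $\delta > 0$,
\[
\mathbf{P}\bigl(\tilde{S}_n > \delta\, n^2\pi_3(n)\bigr) \;\leq\; \frac{\mathbf{E}\tilde{S}_n}{\delta\, n^2\pi_3(n)} \;=\; o(1),
\]
and since $\mathbf{P}(H_n)$ is bounded below uniformly in $n$, the same conclusion persists for the conditional probability given $H_n$. Second, I would invoke the lower-tail estimate of Section~\ref{sec: lowertail}: for every $\eta > 0$ there exists $\delta(\eta) > 0$ such that, for all $n$ large enough,
\[
\mathbf{P}\bigl(\tilde{L}_n < \delta(\eta)\, n^2\pi_3(n) \,\big|\, H_n\bigr) \;<\; \eta.
\]

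Given these two estimates, fix $\epsilon > 0$ and $\eta > 0$, set $\delta = \delta(\eta)$, and decompose
\[
\mathbf{P}\bigl(\tilde{S}_n > \epsilon \tilde{L}_n \,\big|\, H_n\bigr) \;\leq\; \mathbf{P}\bigl(\tilde{L}_n < \delta\, n^2\pi_3(n) \,\big|\, H_n\bigr) + \mathbf{P}\bigl(\tilde{S}_n > \epsilon\delta\, n^2\pi_3(n) \,\big|\, H_n\bigr).
\]
The first term is at most $\eta$ by the lower-tail estimate, and the second is $o(1)$ by the first step. Letting $n \to \infty$ and then $\eta \to 0$ yields $\tilde{S}_n/\tilde{L}_n \to 0$ in probability conditionally on $H_n$, as required.

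The main obstacle is the lower-tail estimate itself. An upper bound on $\mathbf{E}\tilde{L}_n^2$ of order $(n^2\pi_3(n))^2$ is available from~\eqref{eqn: zhangmorrow}, but the Paley--Zygmund inequality using only first and second moments is not strong enough to exclude a power-of-$n$ gap between $\tilde{L}_n$ and its conditional mean; what is really needed is a genuine ``no concentration near zero'' bound on the conditional law of $\tilde{L}_n$. As indicated in the outline, the natural route is to adapt Kesten's lower bound for the number of pivotals in~\cite[(2.46)]{kestencrit}: at each of the order $\log n$ dyadic scales one must produce, with uniformly positive conditional probability, \emph{many}---of total order $n^2\pi_3(n)$---three-arm points forced to lie on the lowest crossing, rather than a single pivotal as in Kesten's original argument. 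Implementing this amplification while controlling the dependence between scales, so that one arrives at a truly conditional lower bound rather than merely an unconditional one, is precisely the substantial technical work carried out in Section~\ref{sec: lowertail}.
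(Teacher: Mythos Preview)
Your argument is correct and uses the same two key ingredients as the paper: the expectation bound $\mathbf{E}\tilde S_n=o(n^2\pi_3(n))$ from Corollary~\ref{cor: lowestcros} and the lower-tail estimate~\eqref{eq: last_sec_to_show} from Section~\ref{sec: lowertail}. Your packaging is in fact a little cleaner than the paper's. Rather than applying Markov's inequality directly to $\tilde S_n$, the paper re-enters the detour construction and splits $\#\tilde\sigma_n$ into three pieces (the detours $\ell(\tilde\Pi)$, the boundary contribution $30n^{1+C_4/2}$, and the undetoured edges $\#\{e\in\hat l_n:\pi(e)=\emptyset\}$), handling the first by the deterministic inequality $\ell(\tilde\Pi)\le\epsilon\,\#l_n$, and the other two by a Markov/lower-tail combination just like yours. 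Since Corollary~\ref{cor: lowestcros} already absorbs all three contributions into a single expectation bound, your direct Markov step on $\tilde S_n$ avoids this unpacking; both routes end at exactly the same place, namely the need for~\eqref{eqn: timetogo}.
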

The proof of Corollary \ref{cor: probratio} will be found in Section \ref{sec: ESn}.

\section{Short detours} \label{sec: short}
On the event $\Omega_n$, we will find another another circuit $\sigma_n \in \Xi(n)$ such that $\# \sigma_n = o(\# \hat \gamma_n)$, where $\hat{\gamma}_n$ is a truncated version of the innermost circuit $\gamma_n$ (see equation \eqref{eqn: hatdef}) in
a slightly thinned version of $A(n)$. To define this annulus, we note the following:
\begin{lma}\label{lem: AB}
For some $C_3>0$ and $C_4 \in (0,1)$, $C_3 (m/n)^{1+C_4} \leq (n/m)^2 \pi_3(m,n)$, or
\begin{equation}
\pi_3(m,n) \geq C_3 (n/m)^{C_4-1} \text{ for all } 1 \leq m \leq n\ .
\end{equation}
Here, $\pi_3(m,n)$ is the probability that there are two disjoint open paths connecting $B(m)$ to $\partial B(n)$ and one closed dual path connecting $B(m)^*$ to $\partial B(n)^*$.
\end{lma}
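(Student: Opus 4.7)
My plan is to show that the three-arm probability decays strictly slower than $m/n$, which is equivalent to the claim that the three-arm exponent is strictly less than one. The strategy combines the Aizenman--Burchard fractal-dimension bound \eqref{boundwithtext} with the Morrow--Zhang length formula from \eqref{eqn: MZestimate}.

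First, I would establish the lemma in the special case $m=1$. The bound \eqref{boundwithtext} together with the standard RSW lower bound on the probability of a horizontal crossing of $B(n)$ yields
\[ \mathbf{E}\tilde{L}_n \geq c\,n^{1+C_4} \]
for some $C_4\in(0,1)$. Combined with the Morrow--Zhang upper bound $\mathbf{E}\tilde{L}_n \leq C n^2 \pi_3(n)$ (the analogue of \eqref{eqn: MZestimate} for the lowest horizontal crossing, proved by the same argument), this gives the absolute lower bound $\pi_3(n) \geq c\,n^{C_4-1}$ for all $n\geq 1$, which is precisely the lemma when $m=1$.

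To extend to $1\leq m \leq n$, I would invoke quasi-multiplicativity of polychromatic arm probabilities, \cite[Proposition~12.2]{nolin}, asserting $\pi_3(m_1,m_3) \asymp \pi_3(m_1,m_2)\pi_3(m_2,m_3)$ for $m_1\leq m_2\leq m_3$. Iterating along dyadic scales between $m$ and $n$ gives
\[ \pi_3(m,n) \;\asymp\; \prod_{k:\,m\leq 2^k < n}\pi_3(2^k,2^{k+1}). \]
The absolute lower bound on the full product $\prod_{k=0}^{\lfloor\log_2 n\rfloor-1}\pi_3(2^k,2^{k+1})\asymp\pi_3(n)$ from the previous paragraph, combined with the uniform RSW upper bound $\pi_3(2^k,2^{k+1})\leq C^*<1$ (which holds because a closed dual circuit across each dyadic annulus has positive probability, obstructing the three-arm event), allows one to control the partial product $\pi_3(m,n)$ by dividing the lower bound on the product up to scale $n$ by the upper bound on the product up to scale $m$. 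This yields $\pi_3(m,n)\geq C_3 (n/m)^{C_4-1}$, possibly after replacing $C_4$ by a smaller value still in $(0,1)$.

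The main obstacle is this extension step. A naive application of quasi-multiplicativity using only $\pi_3(n)\gtrsim n^{C_4-1}$ and the trivial bound $\pi_3(m)\leq 1$ gives only $\pi_3(m,n)\gtrsim n^{C_4-1}$, which is weaker than the desired scale-invariant form $(m/n)^{1-C_4}$ whenever $m>1$. Promoting the absolute bound to the scale-invariant one requires combining it with the RSW-type upper bound on dyadic arm ratios and iterating carefully through all dyadic scales between $m$ and $n$.
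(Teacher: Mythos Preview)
Your case $m=1$ is fine and matches the paper's ingredients: Aizenman--Burchard for a length lower bound and Morrow--Zhang for the upper bound in terms of $\pi_3$. The genuine gap is the extension to general $m$.

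Write $a_k=\pi_3(2^k,2^{k+1})$. Quasi-multiplicativity gives $\pi_3(m,n)\asymp\pi_3(n)/\pi_3(m)$, so to lower bound $\pi_3(m,n)$ you need an \emph{upper} bound on $\pi_3(m)$. Your proposal is to use the RSW bound $a_k\le C^*<1$, giving $\pi_3(m)\le m^{-\beta}$ with $\beta=-\log_2 C^*$. Combined with $\pi_3(n)\ge c\,n^{C_4-1}$ this yields
\[
\pi_3(m,n)\ \gtrsim\ n^{C_4-1}m^{\beta}.
\]
For this to dominate $(m/n)^{1-C_4'}$ uniformly in $1\le m\le n$ with some $C_4'\in(0,1)$, one needs $\beta\ge 1-C_4$. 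But consistency of the two bounds on $\pi_3(m)$ forces the opposite inequality: $m^{-\beta}\ge \pi_3(m)\ge c\,m^{C_4-1}$ gives $\beta\le 1-C_4$. So your division step can succeed only if $\beta=1-C_4$ exactly, i.e.\ if the RSW annulus bound is sharp for three arms, which is not known. Put differently, $f(k)=-\log_2\pi_3(2^k)$ satisfies the global bound $f(k)\le(1-C_4)k+O(1)$, while the scale-invariant claim is a bound on \emph{increments} $f(N)-f(M)\le(1-C_4')(N-M)+O(1)$; the RSW input $a_k\le C^*$ bounds increments \emph{below}, not above, and so points the wrong way.

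The paper sidesteps this entirely by applying Aizenman--Burchard directly at both scales: it tiles $B(n/2)$ by boxes of side $m/10$, notes that the expected number of tiles meeting the lowest crossing is at most $C(n/m)^2\pi_3(m,n)$ (each hit tile forces a three-arm event from scale $m$ to scale $n$), and invokes the Aizenman--Burchard covering bound to say this count is at least $c(n/m)^{1+C_4}$ with positive probability. Comparing the two gives $\pi_3(m,n)\ge C_3(n/m)^{C_4-1}$ in one stroke, with no passage through $m=1$ or quasi-multiplicativity. Since the Aizenman--Burchard result is itself scale-invariant, this is the natural route; your detour through $m=1$ discards exactly the scale-invariance you then have to recover.
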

\begin{proof}
We use the result of Aizenman and Burchard  \cite{aizenman}.
 
Consider the event $\Lambda_1$ that 
\begin{enumerate}
\item there is a closed dual crossing of $([-n,n]\times [-n/2,-n/4])^*$, connected to the bottom of $B(n)^*$ by a closed dual crossing,
\item there is a closed dual crossing of $([-n,n]\times [n/4,n/2])^*$,
\item there is an open left-right crossing of $[-n,n]\times[-n/4,n/4]$.
\end{enumerate}
Then, by RSW, $\mathbf{P}(\Lambda_1)\ge c$ for some $c>0$. Note that on $\Lambda_1$, the lowest open crossing of $B(n)$ contains an open crossing of $B(n/2)$.

Tile the box $B(n/2)=[-n/2,n/2]^2$ by boxes of size $(1/10)m\times (1/10)m$, and let $L(m,n)$ be the number of these boxes that intersect the lowest crossing of $B(n)$.
\begin{align*}
\mathbf{E}L(m,n) &\le \sum_{B}\mathbf{P}(B\cap \tilde{L}_n\neq \emptyset)\\
&\le C\left(\frac{n}{m}\right)^2\pi_3(m,n),
\end{align*}
where the sum is over boxes $B$ of side-length $m/10$ in the tiling of $B(n/2)$.

Critical percolation in $\frac{1}{n}\mathbb{Z}^2\cap (B(1)=[-1,1]^2)$ satisfies ``Hypothesis H2'' in that paper. For $\ell>0$ and $\mathcal{C}$ a curve formed by a self-avoiding concatenation of open edges in $B(1)$, let $N(\mathcal{C},\ell)$ be the minimal number of sets of diameter $\ell$ required to cover $\mathcal{C}$. 

By \cite[Theorem 1.3]{aizenman} and \cite[Equation (1.21)]{aizenman}, there exists $C_4>0$ such that for any $\epsilon>0$:
\begin{equation}
\label{eqn: ABlwrbd}
\mathbf{P}_{\frac{1}{n}\mathbb{Z}^2\cap  B(1)}\left( \inf_{\mathrm{diam}(\mathcal{C}) \ge 1/10} N(\mathcal{C},\ell) \le C(\epsilon)\ell^{-1-C_4} \right)\le \epsilon,
\end{equation}
uniformly in $n$ sufficiently large and $\ell\ge 1/n$. Choosing $\ell=m/n$, $\epsilon$ sufficiently small and letting $\Lambda_2$ be the event that there is an open crossing of $B(n/2)$ with fewer than $C(\epsilon)n^{1+C_4}$ edges, we have by \eqref{eqn: MZestimate} and \eqref{eqn: ABlwrbd}:
\[\left(\frac{n}{m}\right)^2\pi_3(m,n) \ge (1/C)\mathbf{E}L(m,n) \ge (1/C)\mathbf{E}[L(m,n), \Lambda_1\cap \Lambda_2^c] \ge C_3(\epsilon) (n/m)^{1+C_4}\ .\]
\end{proof}
Now define the annulus
\[
\hat A(n) = B\left( \lfloor 3n - n^{C_4/2} \rfloor \right) \setminus B\left( \lceil n + n^{C_4/2} \rceil \right)
\]
and the inner portion $\hat \gamma_n$ of the innermost open circuit, defined as the union of all edges $e\in \gamma_n$ which lie entirely inside $\hat A(n)$:
\begin{equation}
\label{eqn: hatdef}
\hat \gamma_n = \{e\in \gamma_n: e\subset \hat A(n)\}.
\end{equation}

\subsection{Definition of shielded detours}\label{sec: detours}

In this section, we define the central objects of our construction, the \emph{shielded detour paths} $\pi(e)$, $e \in \hat \gamma_n$.

\begin{df}\label{def: shieldeddetours}Given $\omega \in \Omega_n$, $\epsilon \in (0,1)$, and any $e \in \hat \gamma_n$, we define the set $\mathcal{S} (e)$ of $\epsilon$-\emph{shielded detours} around $e$ as follows. An element of $\mathcal{S}(e)$ is a self-avoiding open path $P$ with vertex set $w_0, w_1, \ldots, w_M$ such that the following hold:
\begin{enumerate}
\item for $i=1, \ldots, M-1$, $w_i \in (A(n) \cap \text{ext }\gamma_n)$,
\item the edges $\{w_0,w_0+\mathbf{e}_1\}, \{w_0-\mathbf{e}_1,w_0\}, \{w_M,w_M+\mathbf{e}_1\}$ and $\{w_M-\mathbf{e}_1,w_M\}$ are in $\gamma_n$ and $w_1=w_0+\mathbf{e}_2$, $w_{M-1}=w_M+\mathbf{e}_2$.
\item writing $Q$ for the subpath of $\gamma_n$ from $w_0$ to $w_M$ that contains $e$, the circuit $Q \cup P$ does not surround the origin,
\item the points $w_0+ (1/2)(-\mathbf{e}_1+\mathbf{e}_2)$ and $w_M+(1/2)(\mathbf{e}_1+\mathbf{e}_2)$ are connected by a dual closed self-avoiding path $R$ whose first and last edges are vertical (translates of $\{0,\mathbf{e}_2\}$) and is such that the curve formed by the union of $R$, the line segments from the endpoints of $R$ to $w_1$ and $w_{M-1}$, and $P$ does not enclose the origin, and
\item $\#P \leq \epsilon \#Q$.
\end{enumerate}
Now fix a deterministic ordering of all finite lattice paths and define $\pi(e)$ to be the first element of $\mathcal{S}(e)$ in this ordering. If the set $\mathcal{S}(e)$ is empty, then we set $\pi(e)=\emptyset$.
\end{df}

\subsection{Properties of the detour paths}\label{sec: properties}
We give the properties of the collection of detours $(\pi(e) : e \in \hat \gamma_n)$ which we use in the next section to prove Theorem \ref{thm: mainthm} and Corollary \ref{cor: probratio}. The definition of $\pi(e)$ (Definition \ref{def: shieldeddetours}) appeared in Section \ref{sec: detours}. 

Let $0<\epsilon<1$. Then:
\begin{enumerate}
\item Each $\pi(e)$ is open and for distinct $e,e' \in \hat \gamma_n$, $\pi(e)$ and $\pi(e')$ are either equal or have no vertices in common.
\item If $e \in \hat \gamma_n$ and $\pi(e) \neq \emptyset$, write $\pi(e) = \{w_0,e_0, \ldots, e_{M-1},w_M\}$. Then $w_0,w_M \in \gamma_n$ but $w_i \in (A(n) \cap \text{ext } \gamma_n)$ for $i=1, \ldots, M-1$.
\item If $e \in \hat \gamma_n$ then the segment $\hat \pi(e)$ of $\gamma_n$ from $w_0$ to $w_M$ containing $e$ (that is, the ``detoured'' portion of $\gamma_n$) is such that $\hat \pi(e) \cup \pi(e)$ is a circuit that does not surround the origin. Furthermore,
\begin{equation}\label{eq: length_less}
\# \pi(e) \leq \epsilon \# \hat \pi(e)\ .
\end{equation}
\item There exists $C_5(\epsilon)>0$ such that for all $n \geq C_5$ and $e \in \hat A(n)$,
\begin{equation}
\mathbf{P}(\pi(e) = \emptyset \mid e \in \hat \gamma_n) \leq \epsilon^2 \ . \label{eqn: Svest}
\end{equation}
\end{enumerate}

We must show the above properties follow from Definition \ref{def: shieldeddetours}. Most of the work will be in showing item 4, the probability estimate \eqref{eqn: Svest}.  Items 2 and 3 hold by definition. For item 1, we have
\begin{prop}\label{prop: four}
If $\omega \in \Omega_n$, then for distinct $e,e' \in \hat \gamma_n$, $\pi(e)$ and $\pi(e')$ are either equal or have no vertices in common.
\end{prop}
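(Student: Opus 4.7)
I argue by contradiction: assume $P := \pi(e)$ and $P' := \pi(e')$ are distinct yet share at least one vertex, and derive $P = P'$. The argument combines the deterministic ordering of paths used to define $\pi(\cdot)$ with the rigid structure imposed by shielding.

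The reduction relies on the following observation. For a detour $P \in \mathcal{S}(e)$ with endpoints $w_0, w_M$, the detoured arc $Q$ of $\gamma_n$ is uniquely characterized among the two arcs of $\gamma_n$ from $w_0$ to $w_M$: it is the one whose union with $P$ does not surround the origin. This characterization is independent of the reference edge (the edge $e$ only enters via the condition $e \in Q$). Hence $P \in \mathcal{S}(e')$ if and only if $e' \in Q$ and $\# P \le \eps \# Q$, and the length inequality depends only on $P$. Consequently, if I can show $e \in Q'$ and $e' \in Q$ whenever $P$ and $P'$ share a vertex, then $P \in \mathcal{S}(e')$ and $P' \in \mathcal{S}(e)$; by minimality in the fixed ordering, $P \preceq P'$ and $P' \preceq P$, forcing $P = P'$.

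The geometric claim to establish is thus $Q = Q'$. I would exploit the shield: the closed dual path $R$, together with the two short segments from its endpoints to $w_1, w_{M-1}$ and the path $P$, traces a Jordan curve bounding a region $D \subset \R^2$ not containing the origin. Because every dual edge of $R$ is closed, and the two segments lie inside single lattice faces (no lattice edge crosses them), no open primal path can enter $D$ except through a vertex of $P$. A parallel observation applies to the pocket $\mathrm{int}(P \cup Q)$. If $v \in P$ is an interior vertex shared with $P'$, then since interior vertices of $P'$ lie in $A(n) \cap \mathrm{ext}\, \gamma_n$ while its endpoints lie on $\gamma_n$, a Jordan-curve analysis of the partition of $\mathrm{ext}\, \gamma_n$ by $R \cup P$ forces the endpoints $w_0', w_M'$ of $P'$ to lie on $Q$: an endpoint on the complementary arc would compel $P'$ to cross the shield $R$, impossible since $R$ is closed. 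The non-surrounding condition~(3) then forces $Q' \subseteq Q$, and by the symmetric argument with $P, P'$ interchanged, $Q \subseteq Q'$. Hence $Q = Q'$.

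The main obstacle will be the topological justification in the interior-interior shared-vertex case: ruling out configurations in which $P'$ winds so that its endpoints lie outside $Q$ while sharing only a single vertex with $P$. This requires that the closed dual edges of $R$ really do form a genuine barrier to open primal paths, and that the two short segments cannot be traversed by any lattice edge. The endpoint-endpoint case (where the shared vertex lies on $\gamma_n$) is handled using the rigid local orientation imposed by condition~(2) of Definition~\ref{def: shieldeddetours}, which forces the shared endpoint and its successor in the $\ebf_2$ direction to be common to both paths, again reducing to $Q = Q'$.
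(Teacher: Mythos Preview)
Your overall strategy is the same as the paper's: reduce to showing $\mathcal{S}(e)=\mathcal{S}(e')$ (equivalently $Q=Q'$) whenever the paths share a vertex, and then invoke the deterministic ordering to get $\pi(e)=\pi(e')$. The difference lies in the separating Jordan curve. You build it from the shield and the \emph{detour}, $R\cup P$; the paper builds it from the shield and the \emph{detoured arc}, $\theta(e)=R\cup Q(e)$ (with short extensions near $w_0,w_M$). The paper's choice is what makes the topological step clean: since interior vertices of $P'$ lie in $\mathrm{ext}\,\gamma_n$, they automatically avoid $Q(e)\subset\gamma_n$, and since $P'$ is open it cannot cross the closed dual path $R$. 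Hence $P'^o$ sits in a single component of $\theta(e)^c$, the same component as $P^o$ by the shared vertex. One then checks (by a local parity argument at $w_0$) that all of $\gamma_n\setminus Q(e)$ lies in the \emph{other} component, forcing the endpoints of $P'$ onto $Q(e)$. By symmetry the endpoints of $P$ lie on $Q(e')$, so the endpoints coincide and $Q=Q'$.

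Your curve $R\cup P$ does not give this for free: nothing prevents $P'$ from crossing $P$ many times, so $P'^o$ need not stay in one component of $(R\cup P)^c$, and your ``partition of $\mathrm{ext}\,\gamma_n$ by $R\cup P$'' argument is exactly where the winding obstacle you flag actually lives. The fix is not to push harder on $R\cup P$ but to switch to $R\cup Q$; then the obstacle disappears and the remaining step is the short local check that $w_1$ and $w_0-\mathbf e_1$ lie on opposite sides of $\theta(e)$, which pins down which component contains $\gamma_n\setminus Q$.
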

The proof of Proposition \ref{prop: four} will be found in Section \ref{sec: lemmata}.

For the rest of this section, we will identify paths with their edge sets. Given such detour paths $\pi(e)$ (which necessarily do not share edges with $\gamma_n$, due to \eqref{eq: length_less}) and detoured paths $\hat \pi(e)$ (subpaths of $\gamma_n$), we construct $\sigma_n$ as follows. First choose a subcollection $\Pi$ of $\{\pi(e) : e \in \hat \gamma_n\}$ that is maximal in the following sense: for all $\pi(e),\pi(e') \in \Pi$ with $e \neq e'$, the paths $\hat \pi(e)$ and $\hat \pi(e')$ share no vertices and the total length of detoured paths $\sum_{\pi \in \Pi} \#\hat \pi$ is maximal. The choice of $\Pi$ can be arbitrary among all maximal ones. 

We put
\[\hat{\Pi} = \{\hat \pi : \pi \in \Pi\}.\] 
Now define $\sigma_n$ to be the path with edge set equal to the union of $\Pi$ and those edges in $\gamma_n$ that are not in $\hat{\Pi}$.

We must now show that if such a construction can be made, then $\limsup_n \frac{\mathbf{E}\# \sigma_n }{\mathbf{E}L_n} \leq \epsilon$. To do this we have to show two things:
\begin{lma} \label{circuit}
For $\omega \in \Omega_n,~ \sigma_n$ is an open circuit in $A(n)$ surrounding the origin.
\end{lma}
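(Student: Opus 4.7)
The plan is to process the detours in $\Pi$ sequentially and argue by induction that each intermediate curve is an open self-avoiding circuit in $A(n)$ surrounding the origin. Choose any enumeration $\pi^{(1)},\dots,\pi^{(K)}$ of $\Pi$, with associated detoured arcs $\hat\pi^{(1)},\dots,\hat\pi^{(K)}$, set $C^{(0)}=\gamma_n$, and define
\[
C^{(k+1)} = \bigl( C^{(k)} \setminus \hat\pi^{(k+1)} \bigr) \cup \pi^{(k+1)}, \qquad k=0,\dots,K-1.
\]
Since the edge set of $\sigma_n$ is exactly $(\gamma_n\setminus\hat\Pi)\cup\Pi$, we will have $C^{(K)}=\sigma_n$ as soon as the inductive step is verified, and the lemma follows.

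The inductive step reduces to three routine checks plus one genuine topological step. The routine checks are: (a) $\hat\pi^{(k+1)}$ is still present in $C^{(k)}$ as a contiguous sub-arc, because the $\hat\pi^{(j)}$'s are vertex-disjoint sub-arcs of $\gamma_n$ by the maximality built into the construction of $\Pi$, so no earlier removal has touched its edges; (b) splicing $\pi^{(k+1)}$ into $C^{(k)}\setminus\hat\pi^{(k+1)}$ at the common endpoints $w_0,w_M$ yields a self-avoiding closed curve, since by item~2 of Section~\ref{sec: properties} the interior vertices of $\pi^{(k+1)}$ lie in $A(n)\cap\text{ext }\gamma_n$ (hence off $\gamma_n$), and by Proposition~\ref{prop: four} they also avoid every other $\pi^{(j)}$; (c) all edges of $C^{(k+1)}$ are open and all its vertices lie in $A(n)$, by item~1 of Section~\ref{sec: properties} together with Definition~\ref{def: shieldeddetours}.

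The nontrivial point---and the main obstacle---is showing that $C^{(k+1)}$ continues to surround the origin. My plan is to use item~3 of Section~\ref{sec: properties}, which asserts that the closed curve $D := \hat\pi^{(k+1)}\cup\pi^{(k+1)}$ does not surround the origin. I would orient $C^{(k)}$ so that it decomposes as the shared arc $\alpha := C^{(k)}\setminus\hat\pi^{(k+1)}$ (from $w_M$ to $w_0$) followed by $\hat\pi^{(k+1)}$ (from $w_0$ to $w_M$), and orient $C^{(k+1)}$ compatibly with $\pi^{(k+1)}$ in place of $\hat\pi^{(k+1)}$. As oriented $1$-cycles one then has $C^{(k)} - C^{(k+1)} = \pm D$, so the winding numbers around the origin satisfy $W(C^{(k+1)},\mathbf{0}) = W(C^{(k)},\mathbf{0})$. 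By induction $W(C^{(k)},\mathbf{0}) = \pm 1$, and hence $C^{(k+1)}$ surrounds the origin. An equivalent route is to count mod~$2$ the crossings of a generic ray from the origin with $C^{(k)}$, $C^{(k+1)}$, and $D$, which satisfy the same linear relation via the Jordan curve theorem. Taking $k=K$ in the completed induction then gives the lemma.
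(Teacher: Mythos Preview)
Your proof is correct and follows the same inductive scaffolding as the paper: process the detours one at a time and verify at each stage that the spliced curve remains a self-avoiding open circuit in $A(n)$ around the origin. Your routine checks (a)--(c) match the paper's preliminary observations essentially verbatim.

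Where you differ is in the topological step. The paper reduces to a claim about replacing a single arc $Q$ of a circuit $\sigma$ by a detour $P$, and then proves that $(\sigma\setminus Q)\cup P$ surrounds the origin via a small Jordan-curve lemma (their ``ABC lemma'') and a case analysis ruling out $Q^o\subset\mathrm{ext}(P\cup(\sigma\setminus Q))$. To make this work they must first check that $P_{k+1}$ continues to satisfy condition~1 of Definition~\ref{def: shieldeddetours} relative to the \emph{updated} circuit $\gamma_n^{(k)}$, not just $\gamma_n$; this costs them an extra paragraph. Your winding-number argument is cleaner here: since $C^{(k)}-C^{(k+1)}=\pm D$ as $1$-cycles and $W(D,\mathbf{0})=0$ directly from item~3 (a statement about $\gamma_n$, not $C^{(k)}$), you get $W(C^{(k+1)},\mathbf{0})=W(C^{(k)},\mathbf{0})$ with no need to track how $\pi^{(k+1)}$ sits relative to the updated curve. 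The paper's approach, by contrast, yields the slightly stronger intermediate fact $Q^o\subset\mathrm{int}(P\cup(\sigma\setminus Q))$, which is not needed for the lemma but is of some independent interest.
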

and
\begin{lma}\label{lem: piempty}
For $\omega \in \Omega_n$, if $e \in \hat \gamma_n \setminus \hat{\Pi}$ then $\pi(e) = \emptyset$.
\end{lma}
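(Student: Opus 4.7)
The plan is to argue by contradiction. Suppose $e \in \hat\gamma_n \setminus \hat\Pi$ but $\pi(e) \neq \emptyset$; I will produce a vertex-disjoint family with strictly larger total $\sum \#\hat\pi$, contradicting the maximality of $\Pi$. Let $\Pi' = \{\pi(e') \in \Pi : \hat\pi(e') \text{ shares a vertex with } \hat\pi(e)\}$, and consider the candidate family $\Pi^* = (\Pi \setminus \Pi') \cup \{\pi(e)\}$. By construction the elements of $\Pi \setminus \Pi'$ have their $\hat\pi$'s pairwise disjoint and disjoint from $\hat\pi(e)$, so $\Pi^*$ is vertex-admissible provided I know what $\Pi'$ looks like.

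The main step (and the main obstacle) is a planarity/classification lemma: for any $\pi(e') \in \Pi'$, the arcs $\hat\pi(e)$ and $\hat\pi(e')$ of the simple closed curve $\gamma_n$ must be comparable, i.e., one must be contained in the other. A priori two sub-arcs of $\gamma_n$ sharing a vertex could overlap partially with four distinct interleaved endpoints on $\gamma_n$, but I will rule this out as follows. By Proposition \ref{prop: four}, distinct detours $\pi(e), \pi(e')$ are vertex-disjoint simple paths; their interior vertices lie in $\text{ext }\gamma_n$ and their endpoints $w_0, w_M$ lie on $\gamma_n$. Two such disjoint paths in $\overline{\text{ext }\gamma_n}$ with interleaved endpoints on $\gamma_n$ would violate the Jordan curve theorem, so interleaving is impossible. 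Moreover, if $\hat\pi(e)$ and $\hat\pi(e')$ share an endpoint on $\gamma_n$, then $\pi(e)$ and $\pi(e')$ share that vertex and Proposition \ref{prop: four} forces $\pi(e) = \pi(e')$, hence $\hat\pi(e) = \hat\pi(e')$. So each $\hat\pi(e')$ in $\Pi'$ is either equal to $\hat\pi(e)$, strictly contained in $\hat\pi(e)$ with distinct endpoints, or strictly contains $\hat\pi(e)$ with distinct endpoints.

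Next I use the assumption $e \notin \hat\Pi$ to eliminate the first and third options in the classification above: $\hat\pi(e') = \hat\pi(e)$ would put $e \in \hat\pi(e') \subset \hat\Pi$, and $\hat\pi(e) \subsetneq \hat\pi(e')$ would again put $e \in \hat\pi(e') \subset \hat\Pi$. So every $\hat\pi(e')$ in $\Pi'$ is strictly contained in $\hat\pi(e)$ and does not contain the edge $e$. Since the $\hat\pi(e')$ for $\pi(e') \in \Pi'$ are pairwise vertex-disjoint sub-arcs of $\hat\pi(e)$, all lying in $\hat\pi(e) \setminus \{e\}$, we obtain
\[
\sum_{\pi(e') \in \Pi'} \#\hat\pi(e') \leq \#\hat\pi(e) - 1 < \#\hat\pi(e).
\]

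Therefore $\Pi^*$ is a valid vertex-disjoint subcollection of $\{\pi(e) : e \in \hat\gamma_n\}$ with $\sum_{\pi \in \Pi^*} \#\hat\pi > \sum_{\pi \in \Pi} \#\hat\pi$, contradicting the maximality of $\Pi$. Hence $\pi(e) = \emptyset$. The only delicate point is the planarity argument ruling out interleaved overlaps; everything else is a clean swap and length bookkeeping.
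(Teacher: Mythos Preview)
Your proof is correct and takes a somewhat different route from the paper's. Both arguments start by assuming $\pi(e)\neq\emptyset$ and invoke Proposition~\ref{prop: four} together with the maximality of $\Pi$, but they diverge at the key topological step.

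The paper argues more sharply: it claims that whenever $\hat\pi(e)$ and $\hat\pi(f)$ share a vertex, one actually has $\pi(e)=\pi(f)$. Concretely, if the arcs do not coincide then some endpoint of one arc is an interior vertex of the other, and a Jordan curve argument forces $\pi(e)\cap\pi(f)\neq\emptyset$, whence equality by Proposition~\ref{prop: four}. In your notation this shows $\Pi'=\emptyset$, so one may simply adjoin $\pi(e)$ to $\Pi$ and contradict maximality without any swap or length bookkeeping.

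You instead do not attempt to rule out the nested case $\hat\pi(e')\subsetneq\hat\pi(e)$; you only use planarity to exclude interleaved endpoints (two disjoint arcs in $\overline{\mathrm{ext}\,\gamma_n}$ with interleaved endpoints on $\gamma_n$ cannot exist) and Proposition~\ref{prop: four} to exclude shared endpoints. You then handle any surviving nested $\hat\pi(e')$'s by the swap $\Pi^*=(\Pi\setminus\Pi')\cup\{\pi(e)\}$ and the strict inequality $\sum_{\Pi'}\#\hat\pi(e')\leq\#\hat\pi(e)-1$. This is a clean and self-contained argument; its virtue is that it does not rely on the somewhat terse Jordan curve step the paper uses to exclude nesting (which implicitly leans on the specific local structure from Condition~2 of Definition~\ref{def: shieldeddetours}). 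The paper's virtue is brevity: once nesting is excluded, no swap is needed at all.
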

The proofs of these two lemmas is detailed in Section \ref{sec: lemmata}.

\section{Estimate for $\mathbf{E}S_n$}\label{sec: ESn}
Now we show that if paths $\pi(e)$ can be defined so as to satisfy the properties in Definition \ref{def: shieldeddetours},  and if we prove Lemmas~\ref{circuit} and \ref{lem: piempty}, we can then conclude Theorem~\ref{thm: mainthm} and Corollary~\ref{cor: lowestcros}.

Let $\ell(\Pi) = \sum_{\pi\in \Pi} \#\pi$ be the total length of the detours in the collection $\Pi$. Assuming Lemmas \ref{circuit} and \ref{lem: piempty}, we estimate the length of $\sigma_n$:

\begin{align*}
\# \sigma_n &= \ell(\Pi) + \#( \gamma_n \setminus \hat{\Pi}) \\
&\leq \ell(\Pi) + \#\{ e\in A(n): e\cap (A(n)\setminus \hat A(n)) \neq \emptyset \} + \#(\hat \gamma_n \setminus \hat{\Pi}) \\
&\leq \ell(\Pi) + 30n^{1+C_4/2} + \# \{e \in \hat \gamma_n : \pi(e)=\emptyset\}\ .
\end{align*}
We have:
\begin{equation}\label{eqn: notachance}
\ell(\Pi) = \sum_{\pi \in \Pi} \# \pi \leq  \epsilon \sum_{\pi \in \Pi} \# \hat \pi =  \epsilon \# \left( \cup_{\pi \in \Pi} \hat \pi\right) \leq \epsilon \# \gamma_n\ .
\end{equation}
Furthermore, due to \eqref{eqn: Svest},
\begin{equation}\label{eqn: bobwasright}
\mathbf{E} \# \{e \in \hat \gamma_n : \pi(e) = \emptyset\} = \sum_{e \subset \hat A(n)} \mathbf{P}(\pi(e) = \emptyset \mid e \in \hat{\gamma}_n) \mathbf{P}(e \in \hat{\gamma}_n) \leq \epsilon^2\mathbf{E}L_n\ .
\end{equation}

Therefore
\[
\mathbf{E} S_n \leq \mathbf{E} \# \sigma_n \leq (\epsilon+\epsilon^2)\cdot\mathbf{E}L_n + 30n^{1+C_4/2}\ .
\]
Since $\epsilon$ is arbitrary and $\mathbf{E}L_n \geq C_3n^{1+C_4}$, Theorem~\ref{thm: MZ} gives $\mathbf{E}S_n = o(n^2 \pi_3(n))$, finishing the proof of Theorem \ref{thm: mainthm}.

To obtain Corollary \ref{cor: probratio} for crossings of a box, we repeat the construction above to obtain a crossing $\tilde{\sigma}_n$ and a union $\tilde{\Pi}$ of detours from the lowest crossing $l_n$. Denoting by $\tilde{\mathbf{P}}$ the probability measure conditioned on the existence of an open crossing 
\[\tilde{\mathbf{P}}=\mathbf{P}(\ \cdot\mid H_n),\]
 write:
\begin{align}
\tilde{\mathbf{P}}(\# \tilde{\sigma}_n > 3\epsilon^{1/2} \tilde L_n) &\le \tilde{\mathbf{P}}(\ell(\tilde{\Pi}) >\epsilon^{1/2} \tilde L_n) + \tilde{\mathbf{P}}(30n^{1+C_4/2} > \epsilon^{1/2}\tilde L_n) \label{eqn: clausius}\\
&\quad + \tilde{\mathbf{P}}(\# \{e\in \hat{l}_n:\pi(e)=\emptyset\} > \epsilon^{1/2} \# l_n ) \nonumber
\end{align}
By an estimate analogous to \eqref{eqn: notachance}, the first probability on the right is zero. We decompose the last term further:
\begin{align}
 \tilde{\mathbf{P}}(\# \{e\in \hat{l}_n:\pi(e)=\emptyset\} > \epsilon^{1/2} \tilde L_n ) &\le \tilde{\mathbf{P}}(\# \{e\in \hat{l}_n:\pi(e)=\emptyset\} > \epsilon \mathbf{E}\tilde{L}_n) \nonumber \\
&\quad +\tilde{\mathbf{P}}(\tilde L_n \le (\epsilon/\epsilon^{1/2})\mathbf{E}\tilde{L}_n).\label{eqn: lordkelvin}
\end{align}
By Markov's inequality and \eqref{eqn: bobwasright}, the term $\tilde{\mathbf{P}}(\# \{e\in \hat{l}_n:\pi(e)=\emptyset\} > \epsilon\mathbf{E}\tilde{L}_n)$ is bounded by $\epsilon$. (Recall that $0<\mathbf{P}(H_n)<1$ uniformly in $n$, so $\tilde{\mathbf{P}}$ is uniformly absolutely continuous with respect to $\mathbf{P}$.)

It remains to estimate the second term on the right in \eqref{eqn: clausius} and the last term on the right in \eqref{eqn: lordkelvin}. Using again that 
$\mathbf{E}\tilde{L}_n\ge C_3 n^{1+C_4}$, we see that it will suffice to show that
\[
\limsup_{n\rightarrow \infty} \tilde{\mathbf{P}}(0<\tilde{L}_n \le \epsilon^{1/2}\mathbf{E}\tilde{L}_n)\le \lambda(\epsilon)
\]
for $\lambda(\epsilon)\rightarrow 0$ as $\epsilon\rightarrow 0$.
Recalling that $\tilde{\mathbf{P}}$ is supported on the event $\{\tilde{L}_n >0\}$ and $\mathbf{P}(H_n)>0$ uniformly in $n$, this reduces to showing
\begin{equation}\label{eqn: timetogo}
\lim_{\epsilon \downarrow 0} \limsup_{n\rightarrow \infty} \mathbf{P}(0<\tilde{L}_n \le \epsilon^{1/2}\mathbf{E}\tilde{L}_n)=0.
\end{equation}
This will be done in the Section \ref{sec: lowertail}.

\section{The events $E_k$}\label{sec: eksec}
In this section, we define events $E_k$ (depending on $\epsilon$), which will be used in the proof of the probability estimate \eqref{eqn: Svest} (see \eqref{eqn: dodo} in Section \ref{sec: conditioning3}). We show that, for some $K(\eta)$,
\begin{equation}
\mathbf{P}(E_k\mid A_3(d))>C_{7} \text{ for all } k \in \{K(\eta), \ldots, \lfloor (C_4/8)\log n\rfloor\} \text{ and } d \geq \lfloor (C_4/8)\log n\rfloor \label{eqn: Eklower}
\end{equation}
for some uniform constant $C$ independent of $n$. $E_k$ will be defined as the conjunction of a large number of crossing events (see equation \eqref{eqn: ekdef}). These events will be gradually introduced over the next few subsections. Figure \ref{consopic} illustrates most of the crossing events in $E_k$. 

The essential property of $E_k$ is 
\begin{prop}\label{prop: whyek}
Let $e$ be an edge of $\mathbb{Z}^2$ with $\epsilon>0$, and let $E_k(e)=\tau_{-e}E_k$ be the translation of $E_k$ by the edge $e$. That is, for any $\omega\in \Omega$,
\[(\omega_{e'})_{{e'}\in\mathcal{E}^2} \in \tau_{-e}E_k \iff (\omega_{e'-e})_{e'\in\mathcal{E}^2} \in E_k.\]

There is a constant $K(\epsilon)$ such that the following holds. Let 
\[k  \in \{K(\epsilon), \ldots, \lfloor (C_4/8) \log n \rfloor\}.\] 
On $E_k(e)\cap \{e\in \hat{\gamma}_n \}$, there is a short detour $\pi(e) \in \mathcal{S}(e)$ around $\hat{\gamma}_n$ contained in $B(3^{k+1}) \setminus B(3^{k-1})$. That is, 
\[
\mathcal{S}(e)\neq \emptyset.
\]
\end{prop}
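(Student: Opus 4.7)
The plan is to design $E_k(e)$ as the intersection of a family of RSW-type events positioned in concentric sub-annuli of $B(e,3^{k+1})\setminus B(e,3^{k-1})$, arranged so that whenever $E_k(e)$ holds and $e \in \hat{\gamma}_n$, every item of Definition~\ref{def: shieldeddetours} is witnessed by explicit pieces of the configuration. Concretely, I would build into $E_k(e)$: (a) a closed dual arc $R$ in the outermost sub-annulus, surrounding $e$ on the side away from the origin with prescribed endpoints in the ``upper half''; (b) a short open arc $P$ confined to a thin middle sub-annular strip of radial width $\alpha\cdot 3^k$, for $\alpha=\alpha(\epsilon)$ to be chosen; (c) at each endpoint of $P$, a five-arm configuration in the sense of Section~\ref{sec: shielded} that produces an anchor $w_0$ (resp.\ $w_M$) carrying the vertical $\gamma_n$-edges required by item~2 of the definition, together with closed dual arms joining $w_0, w_M$ to the endpoints of $R$; and (d) an explicit cap $\# P \le C\alpha^{1-C_4}(3^k)^2\pi_3(3^k)$, realized by taking $P$ to be the innermost open crossing of the thin strip.

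Assuming $E_k(e)\cap\{e\in \hat{\gamma}_n\}$, I would assemble the detour as follows. The five-arm gadgets of (c) force $\gamma_n$ through $w_0$ and $w_M$ with the correct local geometry, so item~2 of Definition~\ref{def: shieldeddetours} holds. The arc $P$ from (b), extended by the open edges of the gadgets, serves as the candidate detour. The union of $R$ with the closed dual arms of the gadgets forms a closed dual self-avoiding path from $w_0+\tfrac{1}{2}(-\mathbf{e}_1+\mathbf{e}_2)$ to $w_M+\tfrac{1}{2}(\mathbf{e}_1+\mathbf{e}_2)$, witnessing item~4; by a Jordan-curve argument this shield separates $P \cup Q$ from the origin, giving item~3; and because no interior vertex of $P$ can be connected to the origin without crossing the shield or $\gamma_n$, item~1 also follows.

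The main obstacle is item~5, the deterministic length comparison $\#P \le \epsilon\,\#Q$. The upper bound on $\# P$ is baked in by (d). For the lower bound on $\# Q$, I would add to $E_k(e)$ the requirement that the portion of $\gamma_n$ between the anchors carries at least $c(3^k)^2\pi_3(3^k)$ edges, which is compatible with positive probability by the Morrow--Zhang matching bounds of Theorem~\ref{thm: MZ} applied at scale $3^{k-1}$ around $e$. Together these yield $\# P / \# Q \le (C/c)\alpha^{1-C_4}$, and the strict positivity of $C_4$ supplied by Lemma~\ref{lem: AB} is exactly what allows one to choose $\alpha=\alpha(\epsilon)$ small enough so that the right side is at most $\epsilon$. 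This is precisely where the thin-strip construction from the sketch of the proposition following \eqref{eqn: aizenman-sugg} does the real work: without the sub-linear gap from Lemma~\ref{lem: AB}, no choice of $\alpha$ would make a crossing of the strip shorter than a linear fraction of the detoured subpath of $\gamma_n$. The $n$-independent lower bound $\mathbf{P}(E_k(e)\mid A_3(d))\ge C_7$ claimed in \eqref{eqn: Eklower} is then a separate task handled by gluing the pieces via RSW and quasi-multiplicativity.
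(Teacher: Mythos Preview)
Your outline follows the paper's approach closely: shield, thin detour, five-arm anchors, and the $\alpha^{1-C_4}$ trade-off via Lemma~\ref{lem: AB}. Two points, however, are not merely omitted details but genuine gaps in the argument as written.

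\textbf{Orientation (item~1).} Your justification that interior vertices of $P$ lie in $\text{ext}\,\gamma_n$ is circular: saying such a vertex cannot reach the origin without crossing the shield or $\gamma_n$ presupposes you already know which side of $\gamma_n$ the detour sits on. The event $E_k(e)$ is local to the annulus and cannot by itself ``know'' where the origin is; the phrase ``on the side away from the origin'' in your item~(a) has no meaning at the level of $E_k(e)$ alone. The paper resolves this (Claim~\ref{claim: clam} and Lemma~\ref{eqn: pingn}) by using the global information supplied by $\{e\in\hat\gamma_n\}$: the closed dual arm from $e^*$ to $B(n)^*$ must cross the circuit formed by the open arc and the lower closed arc in $Ann_2$, and being closed it can only cross the closed part. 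This forces the lower portion of the construction to lie in $\text{int}\,\gamma_n$, and hence $P$ (on the upper side) to lie in $\text{ext}\,\gamma_n$. Without this step you have not verified item~1, nor can you later argue that your many three-arm points actually sit on $\gamma_n$.

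\textbf{Localization of the lower bound on $\#Q$.} Adding to $E_k(e)$ the requirement ``the portion of $\gamma_n$ between the anchors has at least $c\,3^{2k}\pi_3(3^k)$ edges'' makes $E_k(e)$ depend on $\gamma_n$, hence on the entire configuration in $A(n)$. This destroys the annular dependence \eqref{eq: E_k_dependence} that the decoupling argument in Section~\ref{sec: conditioning3} relies on. The paper instead defines a purely local surrogate $Q_5$ (Proposition~\ref{prop: q5}): one counts edges in a fixed box inside the annulus having three arms to prescribed local crossings, and proves via a second-moment (Paley--Zygmund) computation that this count exceeds $c\,3^{2k}\pi_3(3^k)$ with uniformly positive probability. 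Only afterward, on $E_k(e)\cap\{e\in\hat\gamma_n\}$ and using the orientation argument above, does one conclude these edges lie on $q$. Note also that Theorem~\ref{thm: MZ} gives only the expectation; the positive-probability lower bound is not immediate from it.
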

This proposition will be proved in Section \ref{sec: essential}. After we prove this, Lemmas~\ref{circuit} and \ref{lem: piempty}, and Proposition~\ref{prop: four}, we can conclude Theorem~\ref{thm: mainthm}.

\subsection{Sketch of proof of Proposition~\ref{prop: whyek}}
Because the proof of the above proposition requires many constructions, we now give a sketch of the main ideas. Let $\eta \in (1,11/10)$ be given and $k \geq 1$. The quantity $\delta=\eta-1>0$ should be thought of as small. 
Define the annuli 
\[
Ann_i = Ann_i(\eta) = B(\eta 3^{k+i-2}) \setminus B(3^{k+i-2}) \text{ for } i=1,2.
\]
The event $E_k$ has three main features:
\begin{enumerate}
\item Two closed circuits (in green in Figure \ref{consopic}), in $Ann_1$ and $Ann_2$, each with two defects, in thin concentric annuli. These serve to isolate the inside of the annulus from the rest of $\hat{\gamma}_n$. Their thickness is controlled by the small parameter $\delta \ll 1$.  If the origin has 3 arms to a macroscopic distance, the open arms are forced to pass through the defects. 

The existence of these circuits is shown to have probability bounded below independently of $n$ in Section \ref{sec: shielded}.
 
\item An open half-circuit connected to the crossings of the annulus emanating from the origin (in red in Figure \ref{consopic}). This will act as a detour for the portion of $\hat{\gamma}_n$ inside the larger box. 

In Section \ref{sec: thin}, we show that, given the existence of the circuits in the previous item, the open half-circuit contains at most  $\epsilon 3^{2k}\pi(3^k)$ edges with positive probability, where $\epsilon>0$ is small.

\item Boxes containing a sizable number of three-arm points connected to the arms emanating from $e$. (The relevant connections appear in blue in Figure \ref{consopic}.) On $E_k(e)\cap \{e \in \hat{\gamma}_n\}$, these lie on the $\hat{\gamma}_n$.

We give a lower bound of order $C3^{2k}\pi_3(3^k)$ for the number of three arm points on the open arms emanating from the origin. This holds with probability bounded below independently of $n$, conditionally on the events in the previous items. This is done in Section \ref{sec: lowerbd}.
\end{enumerate}

Given these points, the proof proceeds as follows. We first show existence of circuits with defects: we show that for any given $\eta$ close enough to 1, there is a constant $D_1 = D_1(\eta)$ such that for suitable values of $k$, one has
\begin{equation}\label{eq: X_1}
\mathbf{P}(X_1(k,\eta)) \geq D_1(\eta),
\end{equation}
where $X_1(k,\eta)$ is the event that the closed circuits with defects from item one above exist in the annuli $Ann_i$. Next, we give an upper bound for the length of a thin detour. Namely, let $X_2(k,\eta, \epsilon)$ be the event that there is an open half-circuit connected to the defects from item one, staying in $Ann_2$, and having length at most $\epsilon (3^k \pi_3(3^k))^2$. We show in Section~\ref{sec: thin} that for any $\epsilon>0$, there exists $\eta(\epsilon)$ close enough to 1 such that 
\begin{equation}\label{eq: X_2}
\mathbf{P}(X_2(k,\eta,\epsilon) \mid X_1(k,\eta)) \geq D_2(\epsilon)
\end{equation}
for suitable values of $k$. Last, we show existence of many edges on paths that will function as the innermost circuit. Let $X_3(k,\eta,c)$ be the event that in boxes in the interior of the annulus, there are at least $c(3^k \pi_3(3^k))^2$ edges with three disjoint arms: two open to the defects and one closed dual path to the bottom of the annulus. In Section~\ref{sec: lowerbd}, we prove that there exists $c>0$ and a constant $D_3>0$ such that for all $\eta$ close to 1 and suitable values of $k$,
\begin{equation}\label{eq: X_3}
\mathbf{P}(X_3(k,\eta,c) \mid X_2(k,\eta(\epsilon),\epsilon), X_1(k,\eta)) \geq D_3.
\end{equation}
The most important thing here is that $c$ has no dependence on $\eta$ or $\epsilon$, essentially because as $\eta \downarrow 1$, the size of the boxes in which the three arm points lie does not decrease to 0.

To put these pieces together, we first choose $c$ such that \eqref{eq: X_3} holds. Next, given $\epsilon>0$, choose $\eta = \eta(c\epsilon)$ to guarantee \eqref{eq: X_2} with $c\epsilon$ in place of $\epsilon$. For this value of $\eta$, one also has \eqref{eq: X_1}. Combining the above three inequalities, and putting
\[
E_k = X_1(k,\eta) \cap X_2(k,\eta,\epsilon) \cap X_3(k,\eta,c),
\]
one has
\begin{equation}\label{eq: E_k_fake_bound}
\mathbf{P}(E_k) \geq D_1D_2D_3
\end{equation}
for suitable $k$. On this intersection, one is guaranteed that the volume of the detour is at most $\epsilon$ times the volume of the three-arm points in the boxes connected to the defects. To finish the proof, one notes that $E_k$ implies that there is a three-arm connection across the annulus in which $E_k$ occurs. Thus one uses arm separation to see that if $E_k$ and $A_3(d)$ both occur, then one can, with uniformly positive conditional probability, route the arms from $A_3(d)$ to the inner and outer boundaries of the annulus to connect to the the arms from $E_k$. Thus there is a constant $D_4$ independent of $n, \eta, c$ and $k$ such that
\[
(1/D_4) \mathbf{P}(E_k) \mathbf{P}(A_3(d)) \leq \mathbf{P}(E_k,A_3(d)) \leq D_4 \mathbf{P}(E_k) \mathbf{P}(A_3(d)).
\]
This step is standard, so we leave the details to the reader. Combining it with \eqref{eq: E_k_fake_bound} completes the proof.

\begin{figure}
\centering
\includegraphics[scale = 0.6]{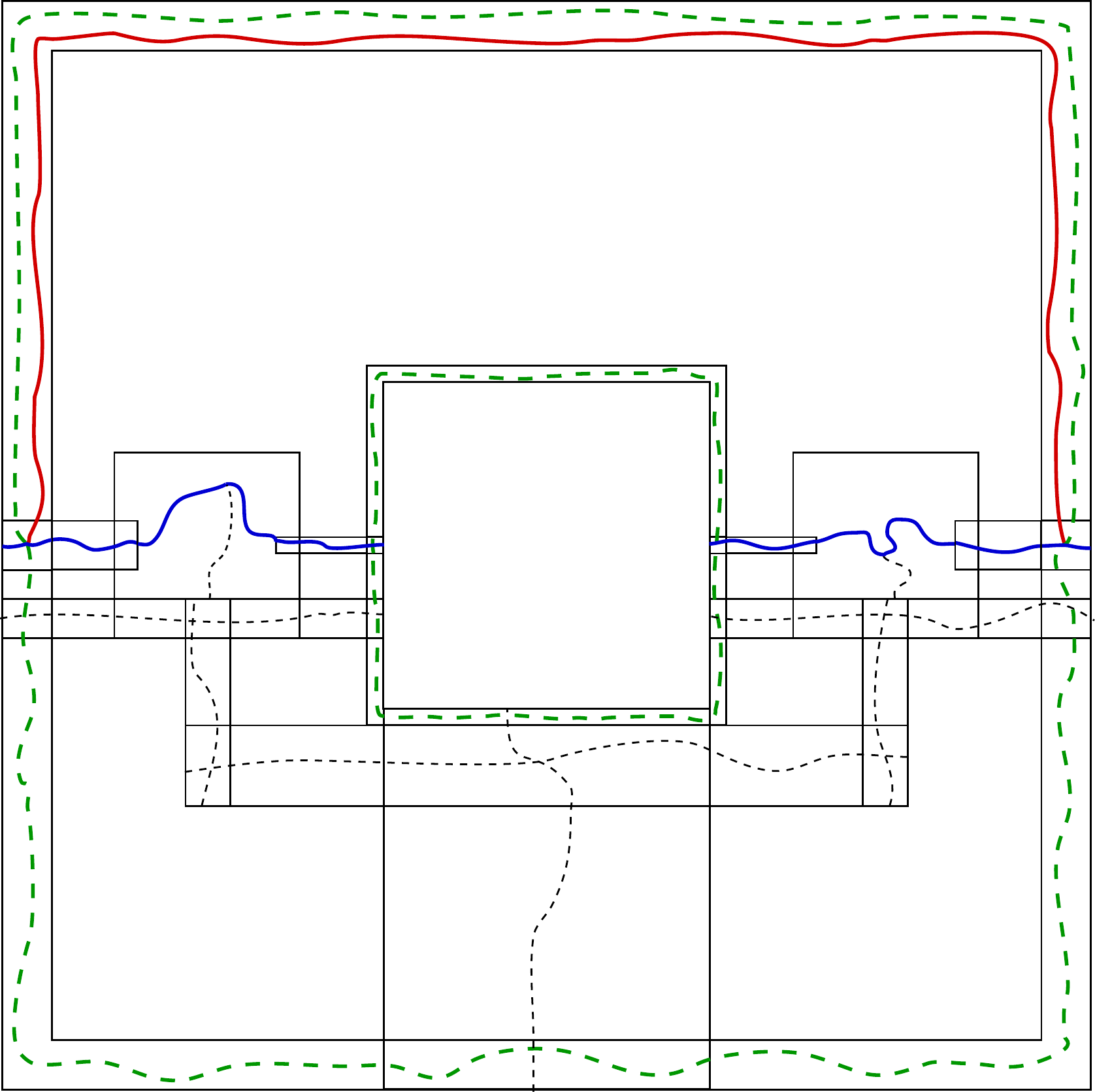}
\caption{An approximate depiction of the event $E_k$. Not all connections are shown.}
\label{consopic}
\end{figure}
\subsection{Five-arm events and shielded circuit}\label{sec: shielded}
In this section we put in place most of the components of the construction of the event $E_k$. Define the boxes 
\begin{align*}
B_1 &= [-\eta 3^k,-3^k] \times [-\frac{3^{k}}{2}(\eta-1),\frac{3^{k}}{2}(\eta-1)],~\\
B_2 &= [-\eta 3^{k-1},-3^{k-1}] \times [-\frac{3^{k-1}}{2}(\eta-1),\frac{3^{k-1}}{2}(\eta-1)],\\
B_3 &= B_1+(3^k(\eta-1),0), \quad B_4 = B_2 +(-3^{k-1}(\eta-1),0),\\
\end{align*}
and the ``long'' rectangles:
\begin{align*}
B_5 &= [-\eta 3^k, -\frac{3^k(1+\eta)}{2}] \times [\frac{3^k}{2}(\eta-1), 3^k\eta],~\\
B_6 &= [-\frac{3^k(1+\eta)}{2}, -3^{k}] \times [\frac{3^{k}}{2}(\eta-1), \frac{3^k}{2}(1+\eta)],\\
B_7 &= [-\eta 3^k, -3^k]\times [-\eta 3^k,-\frac{3^k}{2}(\eta-1)],\\
B_8&= [-\eta 3^{k-1}, -3^{k-1}]\times [-\eta 3^{k-1},-\frac{3^{k-1}}{2}(\eta-1)],
\end{align*}
\begin{align*}
B_9 &= [-\eta 3^{k-1},-3^{k-1}]\times [\frac{3^{k-1}}{2}(\eta-1),\eta 3^{k-1}], & B_{10}&= [-\eta 3^{k-1},\eta 3^{k-1}]\times [3^{k-1},\eta 3^{k-1}],\\
B_{11} &= [-\eta 3^{k-1},\eta 3^{k-1}]\times [-\eta 3^{k-1},-3^{k-1}], & B_{12} &= [-\eta 3^k, \eta 3^k]\times [\frac{3^k}{2}(1+\eta), \eta 3^k ],\\
B_{13} &= [-\frac{3^k}{2}(1+\eta),\frac{3^k}{2}(1+\eta)]\times [3^k, \frac{3^k}{2}(1+\eta)], & B_{14}&= [-\eta3^k, \eta 3^k]\times [-\eta 3^k, -3^k].
\end{align*}
The relative placement of these boxes inside $S(e,\eta 3^k)$ is shown in Figure \ref{consolidation-pic2}. From this point on, we will restrict to $n$ and $k$ such that 
\begin{equation}\label{eq: n_k_choice}
n \geq 1 \text{ and } k \in \{K(\eta), \ldots, \lfloor (C_4)/8 \log n \rfloor\}\ ,
\end{equation}
where $K(\eta)$ is chosen so that all boxes involved have lengths at least some constant, say 10. (This includes the above boxes, but also those used in Section~\ref{sec: lowerbd}.) If we decrease $\eta$, then the range of valid $k$ decreases.

\begin{figure}
\centering
\includegraphics[scale = 0.55]{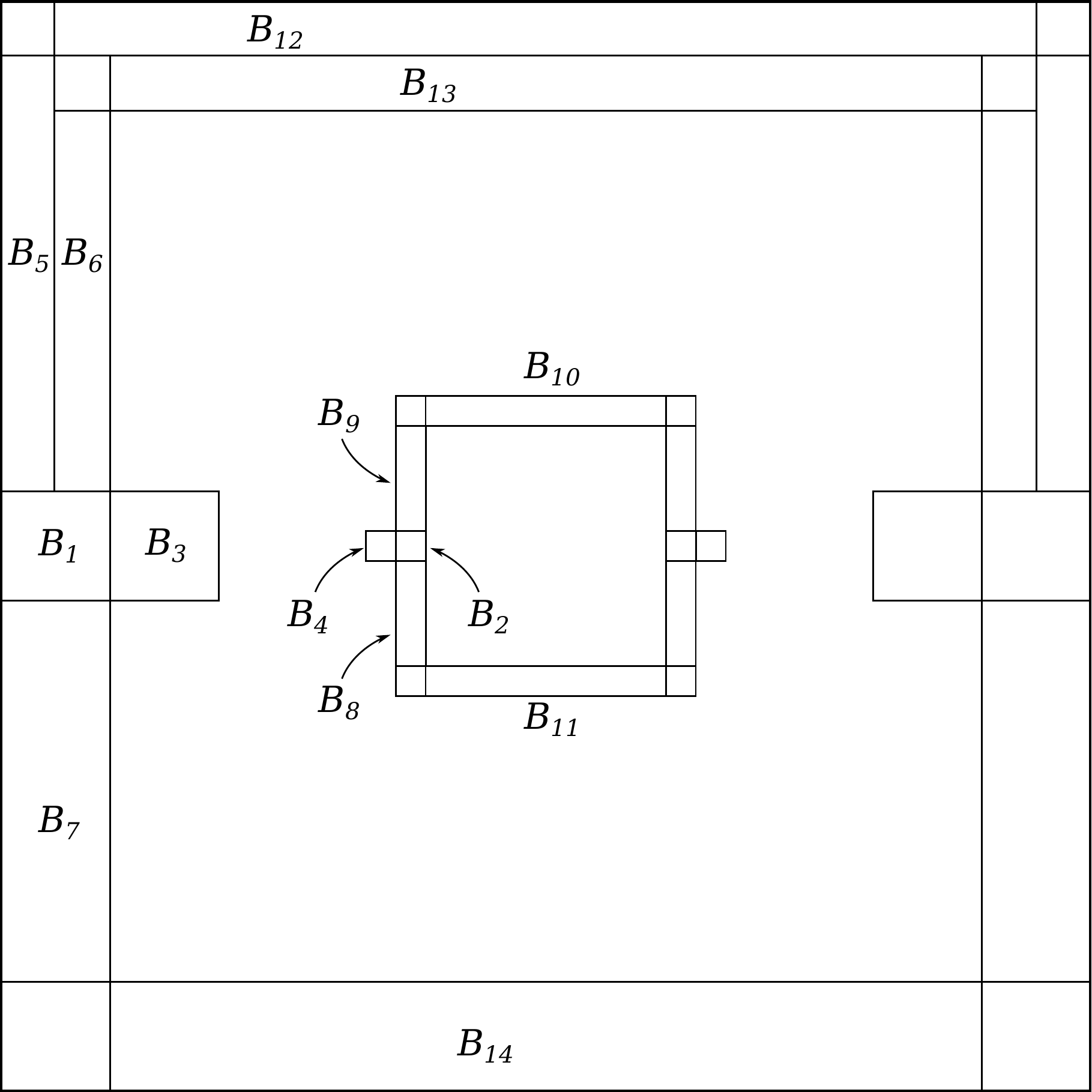}
\caption{The boxes $B_1,\ldots, B_{14}$. This figure is not to scale; only the relative placement of the boxes is illustrated. In particular, in our application, $\eta$ is smaller relative to the ratio of the sizes of the inner to outer annuli.}
\label{consolidation-pic2}
\end{figure}

The most important definition of this section is the following
\begin{df}[Five-arm event] 
\label{def: fivearm}
$M_1 =M_1(k)$ is the event: there is a \emph{five-arm point} $w\in \mathbb{Z}^2$ in the box $B_1$. That is,
\begin{enumerate}
\item The edge $\{w+(1/2)(\mathbf{e}_2-\mathbf{e}_1), w+(1/2)(3\mathbf{e}_2-\mathbf{e}_1)\}$ is closed and has a closed arm $\gamma_1$ to 
\[I_1 = [-\eta 3^k, -\frac{3^k}{2}(1+\eta)]\times \{\frac{3^k}{2}(\eta-1)\},\]
\item $\{w,w+\mathbf{e}_2\}$ has an open arm $\gamma_2$ to 
\[I_2 = [-\frac{3^k}{2}(1+\eta),-3^k]\times \{\frac{3^k}{2}(\eta-1)\},\]
\item $\{w,w+\mathbf{e}_1\}$ has an open arm $\gamma_3$ to the right side of $B_1$,
\item $\{w-(1/2)(\mathbf{e}_1+\mathbf{e}_2),w-(1/2)(3\mathbf{e}_2+\mathbf{e}_1)\}$ has a closed arm $\gamma_4$ to the bottom of $B_1$, and 
\item $\{w-\mathbf{e}_1,w\}$ has an open arm $\gamma_5$ to the left side of $B_1$.
\end{enumerate}
The event $M_1$ is illustrated in Figure \ref{consolidation-pic3}.
\end{df}

\begin{figure}
\centering
\includegraphics[scale = 0.35]{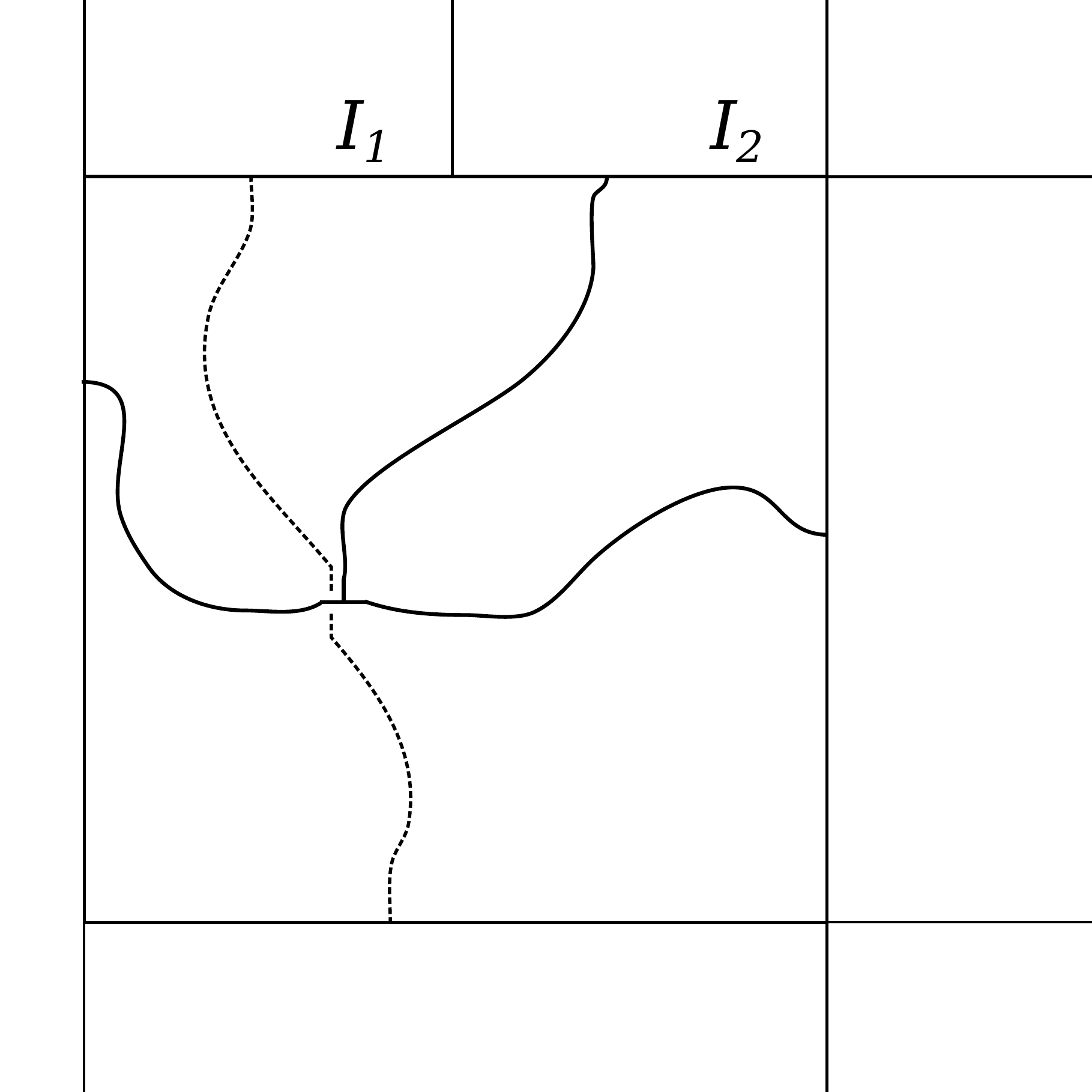}
\caption{The ``five-arm'' event $M_1(k)$}
\label{consolidation-pic3}
\end{figure}

$M_2=M_2(k)$ is the event that there is a four-arm point $z$ in the box $B_2$ with a two open arms, one to each horizontal side of $B_2$, and two closed arms to the top and bottom of $B_2$.

Our first claim is 
\begin{prop}
There is a constant $C$ independent of $k$ such that 
\[\min\left\{ \mathbf{P}(M_1), \mathbf{P}(M_2) \right\} \ge C\]
\begin{proof}
Let $Z_1$ be the number of vertices $w$ in $B_1$ satisfying the conditions in the definition of $M_1$. Then
\[\mathbf{P}(Z_1 >0) \ge \mathbf{P}(\hat Z_1 >0),\]
where $\hat Z_1=\# \hat W_1$ is the number of $w$  with five arms to $\partial B_1$ as in the definition of $M_1$, inside the box $\hat B_1$ with half the side length of $B_1$,  and centered at the same point.
By arms separation arguments, we have
\[\mathbf{E}\hat Z_1 = \sum_{w\in \hat B_1}\mathbf{P}(w \in \hat W_1) \asymp \sum_{w\in \hat B_1}\mathbf{P}\left(A_5\left(\frac{3^k}{2}(\eta-1)\right)\right),\]
where $A_5(3^k(\eta-1)/2)$ is the event that $0$ has 5 arms to distance $\frac{3^k}{2}(\eta-1)$ (with no further conditions on the arms except the ``color sequence'' -- their open and closed statuses -- which is open, open, closed, open, closed). Here $\asymp$ means the ratio of the left and right sides is bounded away from 0 and $\infty$. The 5-arm exponent is universal and equal to $2$ \cite[Lemma 5]{KSZ}, \cite[Theorem 24, 3.]{nolin}, so
\[\sum_{w\in \hat B_1}\mathbf{P}\left(A_5\left(\frac{3^k}{2}(\eta-1)\right)\right) \asymp C>0.\]
On the other hand, by planarity, there can be at most one point in $\hat W_1$, so
\[\mathbf{P}(M_1)\ge \mathbf{P}(\hat Z_1 >0) = \mathbf{E}\hat Z_1 > C.\]
The argument for $M_2$ is similar, noting that the existence of a 5-arm point implies in particular that of a 4-arm point.
\end{proof}
\end{prop}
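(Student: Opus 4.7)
The plan is to prove both lower bounds by a first-moment argument combined with planarity, reducing each event to the existence of a polychromatic five-arm point and invoking the universal five-arm exponent of $2$ for critical bond percolation on $\mathbb{Z}^2$.

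For $M_1$: observe that $B_1$ is a square of side $3^k(\eta-1)$, which has side length at least some fixed constant (say $10$) provided $k \geq K(\eta)$. I would introduce the concentric sub-box $\hat B_1 \subset B_1$ of half the side length. For each $w \in \hat B_1$, let $F(w)$ be the event that the five arms emanating from $w$ with the prescribed colour sequence (open, open, closed, open, closed) reach $\partial B_1$. The distance from $w$ to $\partial B_1$ is of order $3^k(\eta-1)$. Standard arm-separation estimates (see \cite{nolin}) together with the universality of the polychromatic five-arm exponent (equal to $2$; see \cite[Lemma 5]{KSZ}, \cite[Theorem 24]{nolin}) give
\[
\mathbf{P}(F(w)) \asymp (3^k(\eta-1))^{-2}
\]
uniformly in $w \in \hat B_1$. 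Summing over the $\asymp (3^k(\eta-1))^2$ vertices of $\hat B_1$ yields $\mathbf{E}[\#\{w \in \hat B_1 : F(w)\}] \asymp 1$, uniformly in $k$. By planarity, two distinct five-arm points with this colour sequence cannot coexist in $\hat B_1$ (the five-arm structure of one separates any second candidate from part of the boundary), so the count is at most $1$. Consequently
\[
\mathbf{P}(M_1) \;\geq\; \mathbf{P}(\#\{w \in \hat B_1 : F(w)\} > 0) \;=\; \mathbf{E}[\#\{w \in \hat B_1 : F(w)\}] \;\geq\; c > 0
\]
with $c$ independent of $k$. For $M_2$: I would run the identical argument in $B_2$, which still has side length $\asymp 3^{k-1}(\eta-1) \geq 10$; a five-arm point with the same colour sequence automatically yields the four-arm configuration required by $M_2$ (simply drop one of the three open arms), so the same first-moment calculation produces $\mathbf{P}(M_2) \geq c$.

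The one point requiring care is that the two-sided estimate $\mathbf{P}(F(w)) \asymp (3^k(\eta-1))^{-2}$ must be uniform in $w \in \hat B_1$ and in $k$, which is the reason for the lower bound $k \geq K(\eta)$ in \eqref{eq: n_k_choice}: every box in the construction must have side length at least a universal constant so that the standard arm-separation machinery applies at both the inner scale (the edge incident to $w$) and the outer scale ($\partial B_1$). No deeper obstacle arises; the argument is the familiar ``first moment equals full probability'' trick for five-arm points, made possible by the fact that the polychromatic five-arm exponent is exactly $2$.
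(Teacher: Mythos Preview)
Your proposal is correct and follows essentially the same approach as the paper: restrict to the half-sized sub-box $\hat B_1$, use arm separation and the universal five-arm exponent $2$ to get $\mathbf{E}[\#\{w:F(w)\}]\asymp 1$, invoke planarity to conclude that the count is at most one so that the expectation equals the probability, and reduce $M_2$ to the same five-arm argument. Your added remark about the need for $k\ge K(\eta)$ to ensure uniformity of the arm-separation constants is a helpful clarification that the paper leaves implicit.
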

Let $M_3=M_3(k)$ be the event that there is a closed top-down crossing of $B_5$, and an open top-down crossing of $B_6$. $M_4(k)$ is defined to be the event that there are closed top-bottom crossings of $B_7$, $B_8$ and $B_9$ and open left-right and top-down crossings of $B_3$ and $B_4$. By Russo-Seymour-Welsh, we have 
\[
\mathbf{P}(M_3), \mathbf{P}(M_4)\ge C(\eta)>0.
\]

We let $G_1(k)$ be the event that
\begin{enumerate}
\item $M_1$, $M_2$, $M_3$ and $M_4$ occur.
\item The closed arm $\gamma_1$ is connected to the crossing of $B_5$.
\item The open arm $\gamma_2$ is connected to the crossing of $B_6$.
\item The open arm $\gamma_3$ is connected to the crossings of $B_3$.
\item The closed arm $\gamma_4$ is connected to the crossing of $B_7$.
\item The four arm-point in $B_2$ is connected to an open crossing of $B_4$.
\end{enumerate}

Here and elsewhere in the paper, we will make extensive use of the following generalized FKG inequality \cite[Lemma 3]{kestenscaling} (see also \cite[Lemma 13]{nolin}):
\begin{lma}[Generalized FKG inequality]\label{lma: fkg}
Let $A$ and $D$ be increasing events, and $B$ and $E$ decreasing events. Assume that $A$, $B$ $D$, $E$ depend only on edges in the finite sets $\mathcal{A}$, $\mathcal{B}$, $\mathcal{D}$, and $\mathcal{E}$, respectively. If
\[\mathcal{A}\cap \mathcal{B} =\mathcal{A}\cap \mathcal{E}=\mathcal{B}\cap \mathcal{D}=\emptyset,\]
then 
\[\mathbf{P}(A\cap B\mid D\cap E) \ge \mathbf{P}(A\cap B).\]
\end{lma}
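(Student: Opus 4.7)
The plan is to establish the equivalent inequality
\[
\mathbf{P}(A \cap B \cap D \cap E) \geq \mathbf{P}(A \cap B)\,\mathbf{P}(D \cap E),
\]
from which the stated conclusion follows by dividing by $\mathbf{P}(D \cap E)$ (the case $\mathbf{P}(D \cap E) = 0$ being trivial). The basic obstruction is that neither $A \cap B$ nor $D \cap E$ is monotone, so one cannot simply apply the standard FKG inequality. The strategy is to condition on the edges where pairs of events ``collide'' and then apply FKG on the resulting product space under a modified partial order.

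Concretely, let $\omega_0$ denote the configuration on $\mathcal{A} \cap \mathcal{D}$ and $\omega_1$ the configuration on $\mathcal{B} \cap \mathcal{E}$. Since $\mathcal{A} \cap \mathcal{B} = \emptyset$, these two sets are disjoint and the two restrictions are independent under the product measure $\mathbf{P}$. Under the conditioning on $(\omega_0, \omega_1)$, the four events become events on the reduced edge sets $\mathcal{A}' = \mathcal{A} \setminus \mathcal{D}$, $\mathcal{B}' = \mathcal{B} \setminus \mathcal{E}$, $\mathcal{D}' = \mathcal{D} \setminus \mathcal{A}$, and $\mathcal{E}' = \mathcal{E} \setminus \mathcal{B}$. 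A routine check using the three disjointness hypotheses shows that these four sets are pairwise disjoint with the sole possible exception of $\mathcal{D}' \cap \mathcal{E}'$ (about which nothing is assumed). In particular, the events $A$, $B$, and $D \cap E$ become conditionally independent given $(\omega_0, \omega_1)$, which gives
\[
\mathbf{P}(A \cap B \cap D \cap E \mid \omega_0, \omega_1) = \alpha(\omega_0)\,\beta(\omega_1)\,g(\omega_0, \omega_1),
\]
where $\alpha(\omega_0) = \mathbf{P}(A \mid \omega_0)$, $\beta(\omega_1) = \mathbf{P}(B \mid \omega_1)$, and $g(\omega_0, \omega_1) = \mathbf{P}(D \cap E \mid \omega_0, \omega_1)$.

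The monotonicity of $A, B, D, E$ transfers to these conditional probabilities: averaging a monotone indicator against the remaining independent edges preserves monotonicity in the conditioning variables, so $\alpha$ is increasing in $\omega_0$, $\beta$ is decreasing in $\omega_1$, and $g$ is increasing in $\omega_0$ and decreasing in $\omega_1$. I would then apply the standard FKG inequality on the product measure over $(\omega_0, \omega_1)$, equipped with the partial order that reverses direction in the $\omega_1$ coordinate (the product measure is of course invariant under this reversal). With respect to this order both $\alpha(\omega_0)\beta(\omega_1)$ and $g(\omega_0, \omega_1)$ are increasing, so FKG yields
\[
\mathbf{E}[\alpha\beta g] \;\geq\; \mathbf{E}[\alpha\beta]\,\mathbf{E}[g].
\]
Since $\omega_0$ and $\omega_1$ are independent, $\mathbf{E}[\alpha\beta] = \mathbf{P}(A)\,\mathbf{P}(B) = \mathbf{P}(A \cap B)$ (using $\mathcal{A} \cap \mathcal{B} = \emptyset$), while $\mathbf{E}[g] = \mathbf{P}(D \cap E)$ and $\mathbf{E}[\alpha\beta g] = \mathbf{P}(A \cap B \cap D \cap E)$, giving the desired inequality.

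The main subtlety is precisely that neither $A \cap B$ nor $D \cap E$ is monotone, so a direct single application of FKG fails. The reversal-of-order trick on the $\omega_1$-coordinate is what converts the mixed monotonicities ($\alpha$ and $g$ increasing in $\omega_0$; $\beta$ and $g$ decreasing in $\omega_1$) into a common increasing order under which FKG does apply. The three disjointness hypotheses are used in exactly the right combinations: $\mathcal{A} \cap \mathcal{B} = \emptyset$ to make $\omega_0$ and $\omega_1$ independent and to factor $\mathbf{P}(A \cap B)$, while $\mathcal{A} \cap \mathcal{E} = \emptyset$ and $\mathcal{B} \cap \mathcal{D} = \emptyset$ serve to make the reduced edge sets $\mathcal{A}', \mathcal{B}', \mathcal{D}', \mathcal{E}'$ pairwise disjoint outside of $\mathcal{D}' \cap \mathcal{E}'$, so that conditional independence holds.
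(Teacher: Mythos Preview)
The paper does not actually supply a proof of this lemma; it is quoted verbatim as \cite[Lemma~3]{kestenscaling} (see also \cite[Lemma~13]{nolin}) and used as a black box throughout. So there is no in-paper argument to compare against.

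Your argument is correct and is essentially the standard proof. The key verifications all check out: the sets $\mathcal{A}\cap\mathcal{D}$ and $\mathcal{B}\cap\mathcal{E}$ are disjoint (since $\mathcal{A}\cap\mathcal{B}=\emptyset$), so $\omega_0,\omega_1$ are independent; the reduced supports $\mathcal{A}',\mathcal{B}',\mathcal{D}'\cup\mathcal{E}'$ are pairwise disjoint by exactly the three hypotheses, giving the conditional factorization; and the separate monotonicities of $g$ in $\omega_0$ and $\omega_1$ follow because $E$ does not see $\mathcal{A}\cap\mathcal{D}$ (from $\mathcal{A}\cap\mathcal{E}=\emptyset$) and $D$ does not see $\mathcal{B}\cap\mathcal{E}$ (from $\mathcal{B}\cap\mathcal{D}=\emptyset$). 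The order-reversal on the $\omega_1$ coordinates is harmless since a product measure satisfies the FKG lattice condition under any product of coordinatewise total orders. One very minor remark: you appeal to FKG for the nonnegative functions $\alpha\beta$ and $g$, not just for indicators; this is of course fine since the correlation inequality for bounded increasing functions follows from the event version by approximation (or holds directly in the Harris/FKG framework).
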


By generalized FKG and standard gluing constructions, we have
\[\mathbf{P}(G_1)\ge C_9\mathbf{P}(M_3)\mathbf{P}(M_4),\]
for some constant $C_9$ independent of $\eta$. 


$G_2(k)$ is defined to be the reflection of $G_1$ about the vertical axis through $0$. For a box $B_i$, $i=1,\ldots,9$, we let $B'_i$ be its reflection about the $\mathbf{e}_2$-axis. That is, if $x=(x_1,x_2) \in \mathbb{Z}^2$, then $x\in B_i'$ if and only if $(-x_1,x_2)\in B_i$. The same applies to the ``landing zones'' $I_i$. We say that $G_2(k)$ occurs if all the conditions in the definition of $G_1(k)$ occur, replacing each $B_i$, $i=1,\ldots 9$, and $I_1$, $I_2$ by $B'_i$ and $I_1'$, $I_2'$, respectively. By symmetry and independence:
\[\mathbf{P}(G_1(k)\cap G_2(k))\ge C_{9}^2 \mathbf{P}(M_3(k))^2(\mathbf{P}(M_4(k))^2.\]

We let $R_1(k)$ be the event that there is a closed left-right crossing of $B_{12}$; $R_2(k)$ is the event that there is an open left-right crossing of $B_{13}$; $R_3(k)$ is the event that there are closed left-right crossings of $B_{10}$, $B_{11}$ and $B_{14}$.

By Russo-Seymour-Welsh and generalized FKG, we have
\begin{align*}
&\mathbf{P}(G_1(k)\cap G_2(k)\cap R_1(k)\cap R_2(k)\cap R_3(k)) \\
\ge~& C_{9}^2 \mathbf{P}(M_3(k))^2\mathbf{P}(M_4(k))^2\mathbf{P}(R_1(k))\mathbf{P}(R_2(k))\mathbf{P}(R_3(k)).
\end{align*}

The occurrence of the intersection 
\begin{equation}
\label{eqn: q1def}
Q_1(k,\eta) =  G_1(k)\cap G_2(k)\cap R_1(k)\cap R_2(k)\cap R_3(k)
\end{equation}
implies the events:
\begin{enumerate}
\item There are five arm points $w$ and $w'$ in $B_1$ and $B_1'$.
\item There is a closed dual circuit $\alpha_1$ with 2 defects near $w$ and $w'$ in $Ann_2$. The arc $\tilde{\alpha}_1$ of $\alpha_1$ between $w$ and $w'$ is contained in $\Gamma(k,\eta)=Ann_2 \cap\left( \mathbb{R}\times [-\frac{3^k}{2}(\eta-1),\infty)\right)$.
\item There is an open arc $\alpha_2$ contained in $\Gamma(k,\eta)$ with endpoints at $w$ and $w'$. Moreover, $\alpha_2$ is contained in the interior of the dual circuit $\alpha_1$.
\end{enumerate}
We note that there exists $C_{10}(\eta)>0$ depending on $\eta$ such that
\begin{equation}\label{eq: Q_1_bound}
\mathbf{P}(Q_1(k,\eta)) \geq C_{10} \text{ for } k \geq K(\eta)\ .
\end{equation}

\begin{df}[Outermost open path] \label{def: outermost} Given the occurrence of $Q_1(k,\eta)$, we can define the \emph{outermost open arc} $\tilde{\alpha}_2$ contained inside the dual circuit $\alpha_1$. For this, we let $\tilde{\alpha}_1$ denote the portion of $\alpha_1$ between $w$ and $w'$ in $\Gamma(k,\eta)$. The outermost arc $\tilde{\alpha}_2$ is the open arc in $\Gamma(k,\eta)$ with endpoints at $w$ and $w'$ such that the region enclosed by the Jordan curve $\tilde{\alpha}_1 \cup \tilde{\alpha}_2$ (extended near the five-arm points to be a closed curve) is minimal. 
\end{df}

In the above definition, we may choose $\alpha_1$ arbitrarily, but this choice uniquely defines $\alpha_2$. We have the following
\begin{lma}
On the event $Q_1(k,\eta)$, any edge $e$ on $\tilde{\alpha}_2$ has 3 disjoint arms: two open and one closed. These arms reach to distance at least $3^k(\eta-1)$ from $e$. 
\begin{proof}
If $e$ is on $\tilde{\alpha}_2$, then necessarily, there is a closed path from $e^*$ to $\tilde{\alpha}_1$ contained in the region $\mathrm{int}(\tilde{\alpha}_1\cup\tilde{\alpha}_2)$. Following this closed path until we reach the closed path $\tilde{\alpha}_1$, we can extend it into a closed path of the required length, because the path $\alpha_1$ has diameter greater than $3^k$. On the other hand, the path $\tilde{\alpha}_2$ itself has two ends, one of which necessarily has length $3^k/2$. As for the other, it emanates from a five-arm point, itself connected to a crossing of a box of width $3^k(\eta-1)$.
\end{proof} 
\end{lma}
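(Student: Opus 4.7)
The plan is to construct the three arms directly from the geometric configuration guaranteed by $Q_1(k,\eta)$ together with the ``outermost'' property of $\tilde{\alpha}_2$ from Definition \ref{def: outermost}.

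For the two open arms, I would split $\tilde{\alpha}_2$ at $e$: since $\tilde{\alpha}_2$ is a self-avoiding open path from $w \in B_1$ to $w' \in B_1'$, the two sub-arcs from $e$ to $w$ and from $e$ to $w'$ are open and share only the vertex $e$. Because $w$ and $w'$ lie in boxes separated horizontally by distance at least $2\cdot 3^k$, at least one of the sub-arcs (the one terminating at the farther five-arm point) automatically has diameter at least $3^k$, which exceeds $3^k(\eta-1)$ since $\eta<11/10$. For the other, shorter sub-arc terminating at the nearer five-arm point (say $w$), I would prolong it past $w$ using one of the macroscopic open arms $\gamma_3$ or $\gamma_5$ guaranteed by the event $M_1(k)$ in the definition of $G_1$: these open arms travel from $w$ to the left or right side of $B_1$, which has width $3^k(\eta-1)$. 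The resulting extended path is open, and disjoint from the first sub-arc because the five-arm configuration at $w$ ensures that $\gamma_3$ and $\gamma_5$ are vertex-disjoint from $\gamma_2$ (through which $\tilde{\alpha}_2$ enters $w$).

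For the closed arm, the key ingredient is planar duality combined with the minimality condition in Definition \ref{def: outermost}. Let $\mathcal{D}$ denote the closed planar region enclosed by the Jordan curve $\tilde{\alpha}_1 \cup \tilde{\alpha}_2$ (completed by the short segments near the five-arm points). I would argue that the dual cluster of $e^*$ in the closed edges lying inside $\mathcal{D}$ must reach $\tilde{\alpha}_1$. If not, then by planar duality one could find an open primal path $P$ inside $\mathcal{D}$ separating $e^*$ from $\tilde{\alpha}_1$; splicing $P$ into $\tilde{\alpha}_2$ (replacing the portion of $\tilde{\alpha}_2$ between the endpoints where $P$ meets it) would produce another open arc from $w$ to $w'$ which, together with $\tilde{\alpha}_1$, bounds a region strictly contained in $\mathcal{D}$, contradicting the choice of $\tilde{\alpha}_2$ as the outermost arc. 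Once the closed dual path from $e^*$ to $\tilde{\alpha}_1$ is in hand, I would concatenate it with a sub-arc of $\tilde{\alpha}_1$ running toward whichever of $w,w'$ is farther from $e$; since $\tilde{\alpha}_1$ spans horizontally between $B_1$ and $B_1'$ and so has diameter at least $2\cdot 3^k$, this yields a closed dual arm of diameter at least $3^k$ from $e^*$, well above $3^k(\eta-1)$.

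The main obstacle is making the duality/minimality step rigorous: producing the open separating path $P$ from the hypothetical failure of the closed dual connection in a planar domain, and checking that the spliced curve is indeed a simple open arc from $w$ to $w'$ whose enclosed region with $\tilde{\alpha}_1$ is strictly smaller than $\mathcal{D}$. This is a standard, but slightly delicate, topological argument about percolation restricted to a Jordan domain. The secondary bookkeeping task is pairwise disjointness: the two open arms are vertex-disjoint by self-avoidance of $\tilde{\alpha}_2$ (and disjointness of $\gamma_3,\gamma_5$ from $\gamma_2$ at $w$), while the closed dual arm is automatically disjoint from the open arms because it has the opposite color, lies in the interior of $\mathcal{D}$, and its extension along $\tilde{\alpha}_1$ is separated from $\tilde{\alpha}_2$ by construction.
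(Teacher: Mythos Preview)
Your proposal is correct and follows essentially the same route as the paper: both arguments split $\tilde{\alpha}_2$ at $e$ to obtain the two open arms (extending the shorter one through the open arm at the relevant five-arm point), and both produce the closed arm by first reaching $\tilde{\alpha}_1$ from $e^*$ inside the region bounded by $\tilde{\alpha}_1\cup\tilde{\alpha}_2$ and then continuing along $\tilde{\alpha}_1$. Your treatment of the closed arm is in fact more explicit than the paper's, which simply asserts the existence of the closed dual connection to $\tilde{\alpha}_1$ without spelling out the duality/minimality contradiction.
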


\subsection{Upper bound for the volume of the thin detour}
\label{sec: thin}
The main estimate of this section is the following:
\begin{lma}\label{lma: thin}
Let $\epsilon>0$, and define the events
\begin{equation}
Q_1(k) = Q_1(k,\eta) =  G_1(k)\cap G_2(k)\cap R_1(k)\cap R_2(k)\cap R_3(k)
\end{equation}
and 
\begin{equation}
\label{eqn: q2def}
Q_2=Q_2(k,\eta,\epsilon) =\{ \# \tilde{\alpha}_2\le \epsilon 3^{2k}\pi_3(3^k) \}.
\end{equation}
There exists $\delta=\eta-1$ small enough and $C_{11}>0$ such that
\begin{equation}\label{eqn: q2ineq}
\mathbf{P}(Q_2(k,\eta,\epsilon) \mid Q_1(k))\ge C_{11}>0,
\end{equation}
for all $n$ and all $k \in \{K(\eta), \ldots, \lfloor (C_4/8)\log n \rfloor\}$.
\begin{proof}
The key estimate we need is: for each $e \subset \Gamma(k,\eta)$
\[\mathbf{P}(e\in \tilde{\alpha}_2 \mid Q_1(k)) \le C\pi_3(3^k(\eta-1)).\]
We rewrite the probability as
\begin{align*}
&\frac{\mathbf{P}(e\in \tilde{\alpha}_2, Q_1(k))}{\mathbf{P}(Q_1(k))}\\
\le~& \frac{1}{C_{9}^2}\mathbf{P}(M_3(k))^{-2}\mathbf{P}(M_4(k))^{-2}\mathbf{P}(R_1(k))^{-1}\mathbf{P}(R_2(k))^{-1} \mathbf{P}(R_3(k))^{-1} \mathbf{P}(e\in \tilde{\alpha}_2,Q_1(k)).
\end{align*}

Let \[B(e,3^k(\eta-1)) =\{v: \|(v_1+v_2)/2-v\|_{\infty}\le 3^k(\eta-1)\},\]
where $e = \{v_1,v_2\}$. Also let $A_\eta(e)$ be the event that $e$ has 3 arms to the boundary of $B(e,3^k(\eta-1))$, two open and one closed. 
Recalling the definition of $Q_1$ \eqref{eqn: q1def}, we have
\[\mathbf{P}(e\in \tilde{\alpha}_2 , Q_1(k))\le \mathbf{P}(A_\eta(e), M_3\cap M_3' \cap M_4 \cap M_4' \cap R_1\cap R_2 \cap R_3).\]

We now define crossing events in ``truncated'' regions, which depend on the position of $e$: $\tilde{M}_3$ is the event
\begin{enumerate}
\item There are vertical closed crossings of each component of
\[[-\eta 3^k,-\frac{3^k}{2}(\eta+1)]\times [\frac{3^k}{2}(\eta-1),3^k(2-\eta)] \setminus B(e,3^k(\eta-1)).\] 
\item There are vertical open crossings of each component of
\[[-\frac{3^k}{2}(\eta+1),-3^k]\times [\frac{3^k}{2}(\eta-1), 3^k(2-\eta)] \setminus B(e,3^k(\eta-1)).\]
\end{enumerate}
$\tilde{M}_4$ is the event that there are closed top to bottom crossings of $B_8$ and $B_9$, open left-right crossings of $B_3$ and $B_4$, and a closed vertical crossing of each component of 
\[[-\eta 3^k, -3^k]\times[-3^k,-\frac{3^k}{2}(\eta-1)]\setminus B(e,3^k(\eta-1)).\]

$\tilde{M}_3'$, $\tilde{M}_4'$ are the reflections of the events $\tilde{M}_3$ and $\tilde{M}_4$ about the $y$-axis. 
$\tilde{R}_1$ is the event that there are closed left-right crossings of each component of
\[[-3^k(2-\eta), 3^k(2-\eta)]\times [\frac{3^k}{2}(\eta+1),\eta 3^k]\setminus B(e,3^k(\eta-1)),\] 
and similarly $\tilde{R}_2$ is the event that there are open left-right crossings of each component of
\[ [-3^k(2-\eta), 3^k(2-\eta)]\times [3^k,\frac{3^k}{2}(\eta+1)] \setminus B(e,3^k(\eta-1)).\]
The definition of the events implies that the truncated regions considered are either rectangles, or consist of the union of two disjoint rectangles which also do not abut (see Figure \ref{fig3} -- this is the reason for the choice of slightly smaller rectangles with bound $3^k(2-\eta)$), which implies:
\begin{align*}
M_3 &\subset \tilde{M}_3,\ & M_3'\subset \tilde{M}_3',\\
M_4 &\subset \tilde{M}_4,\ & M_4'\subset \tilde{M}_4',\\
R_1 & \subset \tilde{R}_1,\ &  R_2\subset \tilde{R}_2.
\end{align*}

\begin{figure}
\centering
\includegraphics[scale = 0.7]{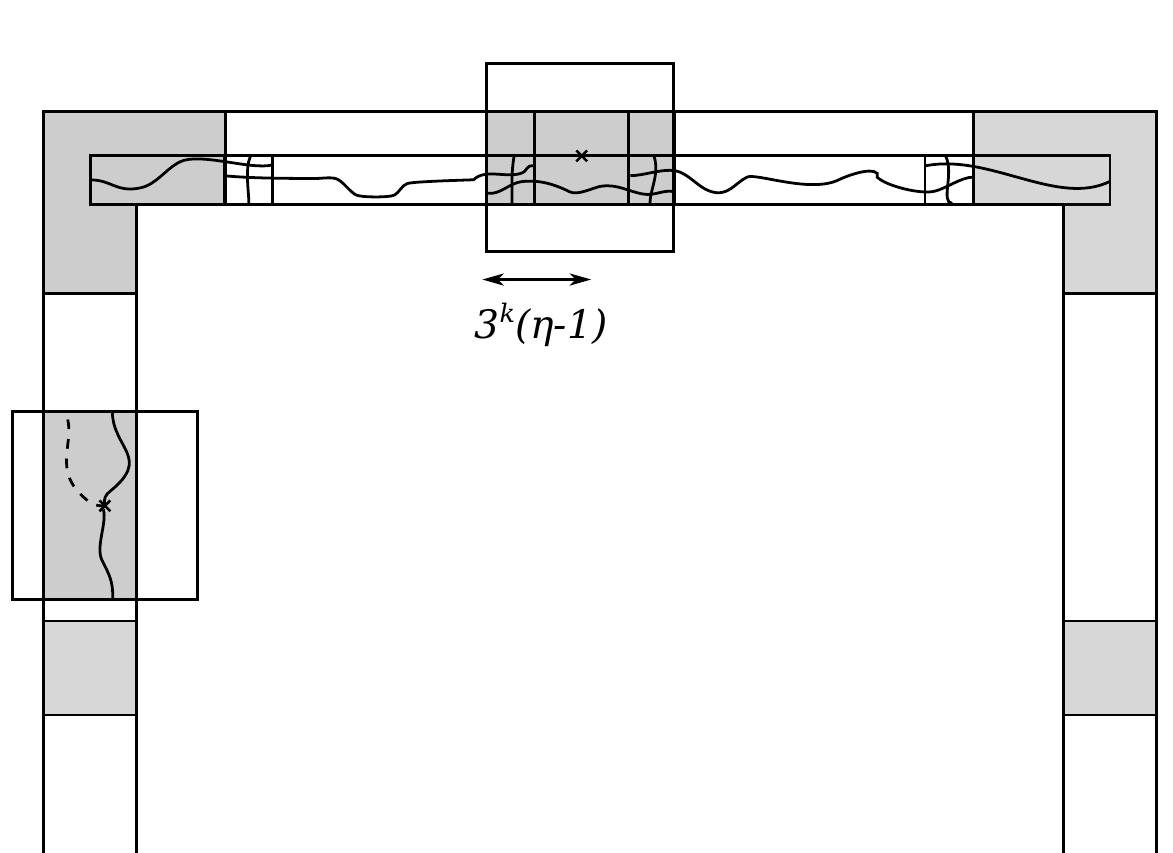}
\caption{An illustration of the truncated regions for two possible placements of the edge $e$.}
\label{fig3}
\end{figure}

Then:
\begin{align*}
&\mathbf{P}(A_\eta(e), M_3\cap M_3' \cap M_4 \cap M_4' \cap R_1\cap R_2 \cap R_3) \\
\le~& \mathbf{P}(A_\eta(e), \tilde{M}_3\cap \tilde{M}_3'\cap \tilde{M}_4\cap \tilde{M}_4' \cap \tilde{R}_1\cap \tilde{R}_2\cap R_3)\\
=~& \mathbf{P}(A_\eta(e))\mathbf{P}(\tilde{M}_2)^2\mathbf{P}(\tilde{M}_3)^2\mathbf{P}(\tilde{R}_1)\mathbf{P}(\tilde{R}_2)\mathbf{P}(R_3).
\end{align*}
In the second step we have used independence.
Using a gluing construction and FKG, it is easy to ``fill in'' the truncated regions and show
\begin{align*}
\mathbf{P}(\tilde{M}_4)&\le C\mathbf{P}(M_4),& \mathbf{P}(\tilde{M}_3)&\le C\mathbf{P}(M_3),\\
\mathbf{P}(\tilde{R}_1)&\le C\mathbf{P}(R_1), & \mathbf{P}(\tilde{R_2})&\le C\mathbf{P}(R_2).
\end{align*}
The point here is that the constants represented by $C$ do not depend on $\eta$ and $k$. This is due to the fact that the regions we must fill in have size of order $3^k(\eta-1)$.

Summarizing all the above, we now have
\[\mathbf{P}(e\in \tilde{\alpha}_2\mid Q_1(k))\le C\pi_3(3^k(\eta-1) ).\]
Summing over $e$ in $\Gamma(k,\eta)$, this gives
\[\mathbf{E}[\# \tilde{\alpha}_2\mid Q_1(k)] \le C\delta 3^{2k}\pi_3(3^k(\eta-1)), \]
with $\delta=\eta-1$. The lemma now follows by Chebyshev's inequality, and the observation that for all $1 \leq m \leq n$, one has 
$\pi_3(m,n) \ge C_{3}(n/m)^{-\alpha}$ 
for some $C_{3}>0$ and $\alpha\in (0,1)$ (see Lemma~\ref{lem: AB}). Combined with quasi-multiplicativity, it gives:
\[\pi_3(3^k(\eta-1))\le (C_{3}(\eta-1))^{-\alpha}\pi_3(3^k),\]
and 
\[\mathbf{E}[\# \tilde{\alpha}_2\mid Q_1(k)] \le C \delta^{1-\alpha} 3^{2k}\pi_3(3^k).\]
Choosing $\delta$ sufficiently small gives the result.
\end{proof}
\end{lma}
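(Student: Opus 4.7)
The plan is to prove the lemma via a conditional first-moment bound on $\#\tilde{\alpha}_2$ followed by Markov's inequality. Specifically, I aim to establish
$$\mathbf{E}[\#\tilde{\alpha}_2 \mid Q_1(k,\eta)] \leq C\,\delta^{1-\alpha}\, 3^{2k}\pi_3(3^k),$$
with $\delta = \eta-1$, some fixed $\alpha \in (0,1)$ coming from Lemma~\ref{lem: AB}, and $C$ independent of $n$, $k$, and $\eta$. Markov's inequality then gives $\mathbf{P}(\#\tilde{\alpha}_2 > \epsilon\, 3^{2k}\pi_3(3^k) \mid Q_1) \leq C\delta^{1-\alpha}/\epsilon$, and taking $\delta$ small depending on $\epsilon$ forces this below $1/2$, establishing the lemma with $C_{11} = 1/2$.

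For the first moment, I would estimate $\mathbf{P}(e \in \tilde{\alpha}_2 \mid Q_1(k,\eta))$ for each edge $e \subset \Gamma(k,\eta)$ and then sum; the number of candidate edges is $O(\delta\, 3^{2k})$ because $\Gamma$ is a strip of thickness $O(3^k\delta)$. The geometric input is that $e \in \tilde{\alpha}_2$ implies a three-arm configuration at $e$: two open arms along $\tilde{\alpha}_2$ itself and one closed dual arm into the thin strip bounded above by $\tilde{\alpha}_1$. This configuration need only extend to distance $O(3^k\delta)$ before it meets $\tilde{\alpha}_1$ or the ends of $\tilde{\alpha}_2$ at the five-arm points. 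Thus locally we pay a factor $\pi_3(3^k\delta)$, which by Lemma~\ref{lem: AB} and quasi-multiplicativity is at most $C\delta^{-\alpha}\pi_3(3^k)$.

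The decoupling step requires the most care. I would introduce a truncated version $\widetilde{Q}_1$ of $Q_1(k,\eta)$ in which each crossing event from $G_1,G_2,R_1,R_2,R_3$ is required only in the connected components remaining after a box of side $3^k\delta$ centered at $e$ is deleted. By choosing the bookkeeping rectangles slightly smaller than the original $B_i$ (as foreshadowed by the shrinkage to $3^k(2-\eta)$ in the construction), every truncated region is either a single rectangle or a disjoint union of two non-adjacent rectangles, so any original crossing induces crossings of all the pieces. Hence $Q_1 \subset \widetilde{Q}_1$, and the three-arm event at $e$ — supported on edges inside the deleted box — is independent of $\widetilde{Q}_1$, yielding
$$\mathbf{P}(\{e \in \tilde{\alpha}_2\} \cap Q_1) \leq C\,\pi_3(3^k\delta)\, \mathbf{P}(\widetilde{Q}_1).$$
One then refills the hole by an RSW plus generalized FKG gluing construction: since the hole has diameter $3^k\delta$ and is adjacent to rectangles of comparable thickness, the cost of reconstructing the missing pieces of each crossing is a constant that does not depend on $\eta$ or $k$, giving $\mathbf{P}(\widetilde{Q}_1) \leq C\,\mathbf{P}(Q_1)$. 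The principal obstacle is verifying this $\eta$-uniform gluing cost, i.e.\ checking for every rectangle in the $B_i$-family that its aspect ratio after truncation is bounded away from $0$ and $\infty$ uniformly in $\delta$ small and in the position of $e$ in $\Gamma(k,\eta)$; once this geometric check is in hand, dividing by $\mathbf{P}(Q_1)$ and summing over $e$ yields the claimed first-moment bound and hence the lemma.
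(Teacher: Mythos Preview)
Your proposal is correct and follows essentially the same route as the paper: a conditional first-moment bound $\mathbf{E}[\#\tilde{\alpha}_2\mid Q_1]\le C\delta^{1-\alpha}3^{2k}\pi_3(3^k)$ obtained by (i) the three-arm event at scale $3^k\delta$ for $e\in\tilde{\alpha}_2$, (ii) truncating the crossing events defining $Q_1$ away from a box of size $3^k\delta$ around $e$ to gain independence, (iii) refilling the hole by RSW/generalized FKG at $\eta$-uniform cost, and (iv) summing over the $O(\delta\,3^{2k})$ edges of $\Gamma(k,\eta)$ and invoking quasi-multiplicativity with Lemma~\ref{lem: AB}. The paper carries out exactly these steps, including the geometric bookkeeping you flag as the principal obstacle.
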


\subsection{Lower bound for the volume of detoured crossings}
\label{sec: lowerbd}
We define boxes inside the annulus $B(\eta 3^k)\setminus B(3^{k-1})$, and events on which these boxes will be traversed by the open arms emanating from the origin, and contribute on the order of $(3^k)^2\pi_3(3^k)$ edges. The boxes are centered at the midpoints of $[-3^k,-\eta 3^{k-1}]$ and $[\eta 3^{k-1},3^k]$, respectively. Let
\[x_0(k) = \left(-\frac{3^k+3^{k-1}\eta}{2},0\right)= \left(-\frac{3^{k-1}(3+\eta)}{2},0\right).\]
The left ``interior box'' is
\[ B_{15}= B\left(x_0(k),\frac{3^{k-1}}{4}(3-\eta)\right).\]
Inside $B_{15}$, we place a smaller box (a quarter of the size), also centered at $x_0(k)$: it is defined by 
\[ B_{16} = B\left(x_0(k), \frac{3^{k-1}}{16}(3-\eta)\right).\]

\begin{figure}
\centering
\includegraphics[scale = 0.70]{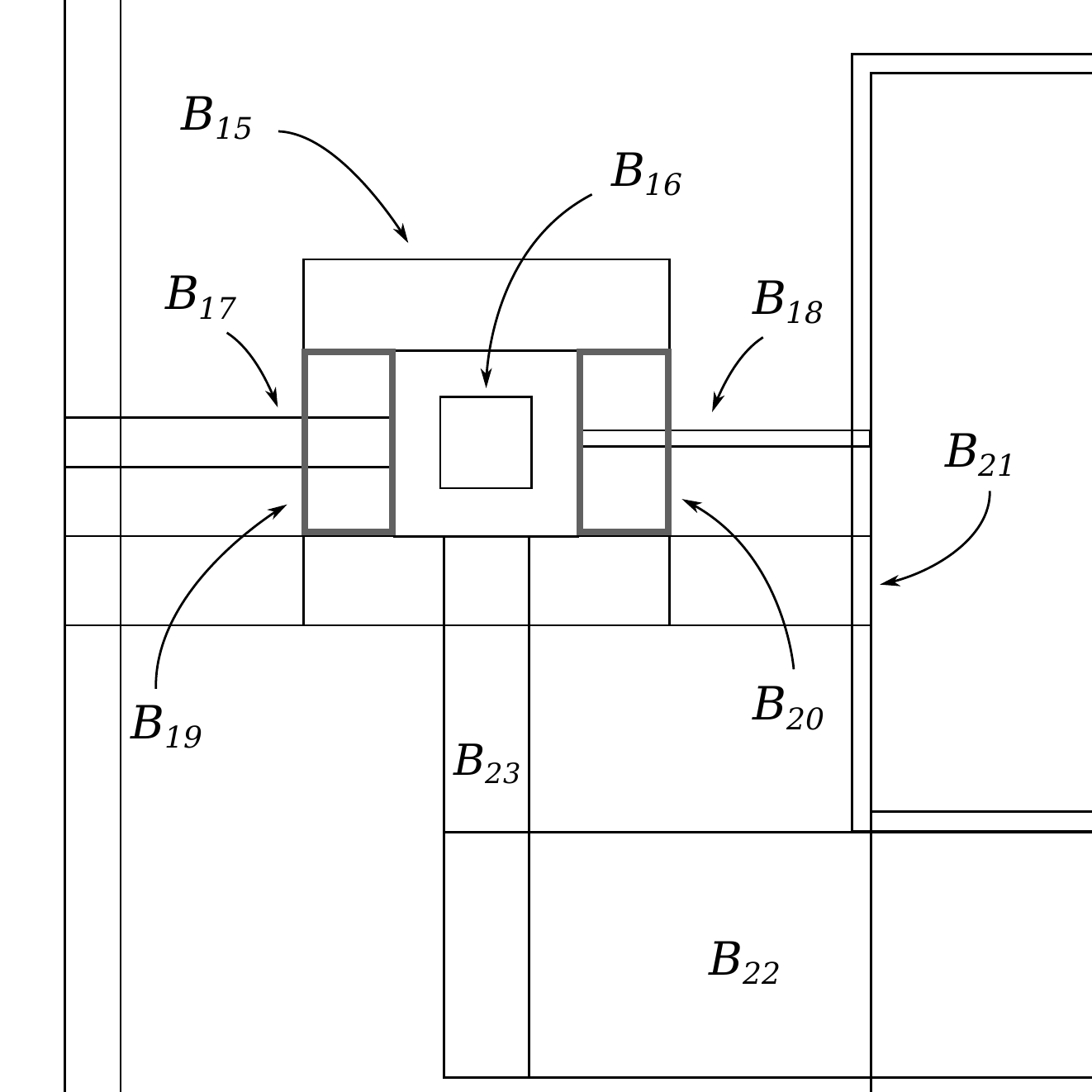}
\caption{The boxes appearing in the derivation of the lower bound for detoured crossings}
\label{consolidation-pic5}
\end{figure}

The boxes $B_{17}$ and $B_{18}$ have aspect ratios depending on $\delta=\eta-1$. Together with $B_{19}$ and $B_{20}$, they will be used to connect the 3-arm points inside $B_{15}$ to an open crossing of $Ann_1(k)$:
\begin{align*}
B_{17}&= [-3^{k},-\frac{3^{k-1}}{2}(3+\eta)-\frac{3^{k-1}}{8}(3-\eta)]\\
&\qquad \times [-\frac{3^{k}}{2}(\eta-1),\frac{3^{k}}{2}(\eta-1)],\\
B_{18}&= [-\frac{3^{k-1}}{2}(3+\eta)+\frac{3^{k-1}}{8}(3-\eta),-\eta 3^{k-1}]\\
&\qquad \times [-\frac{3^{k-1}}{2}(\eta-1),\frac{3^{k-1}}{2}(\eta-1)],\\
B_{19} &= [-\frac{3^{k-1}}{2}(3+\eta)-\frac{3^{k-1}}{4}(3-\eta),-\frac{3^{k-1}}{2}(3+\eta)-\frac{3^{k-1}}{8}(3-\eta)]\\
&\qquad \times [-\frac{3^{k-1}}{8}(3-\eta),\frac{3^{k-1}}{8}(3-\eta)],\\
B_{20} &= [-\frac{3^{k-1}}{2}(3+\eta)+\frac{3^{k-1}}{8}(3-\eta),-\frac{3^{k-1}}{2}(3+\eta)+\frac{3^{k-1}}{4}(3-\eta)]\\
&\qquad \times [-\frac{3^{k-1}}{8}(3-\eta),\frac{3^{k-1}}{8}(3-\eta)].
\end{align*}

The remaining boxes will serve to define crossing events to connect 3-arm points in $B_{16}$ to a closed crossing of $Ann_1(k)$.
\begin{align*}
B_{21} &=[-\eta 3^k, -3^{k-1}]\times [-\frac{3^{k-1}}{4}(3-\eta),-\frac{3^{k-1}}{8}(3-\eta)],\\
B_{22}&=[-\frac{3^{k-1}}{2}(3+\eta)-\frac{3^{k-1}}{16}(3-\eta),\frac{3^{k-1}}{2}(3+\eta)+\frac{3^{k-1}}{16}(3-\eta)]\\
&\qquad \times [-3^{k-2}(3+\eta),-\eta 3^{k-1}],\\
B_{23}&= [-\frac{3^{k-1}}{2}(3+\eta)-\frac{3^{k-1}}{16}(3-\eta),-\frac{3^{k-1}}{2}(3+\eta)+\frac{3^{k-1}}{16}(3-\eta)]\\
&\qquad\times [-3^{k-2}(3+\eta),-\frac{3^{k-1}}{8}(3-\eta)],\\
B_{24}&= [-3^{k-1},3^{k-1}]\times[-\eta 3^k,-3^{k-1}], \\
B_{25} &= B\left(x_0(k), \frac{3^{k-1}}{8}(3-\eta)\right).
\end{align*}

Let $M_5=M_5(k)$ be the event that there are open left-right crossings of $B_{17}$ and $B_{18}$. 
$M_6=M_6(k)$ is defined as the event that there is a top-down closed dual crossing of $B_{23}$, and a left-right closed dual crossing of $B_{21}$. $M_7=M_7(k)$ is the event that there are open top-down crossings of both $B_{19}$ and $B_{20}$.

By Russo-Seymour-Welsh and Harris' inequality, there are positive constants $C(\eta)$ and $C$ such that
\begin{align*}
\mathbf{P}(M_5)&\ge C(\eta),\\
\mathbf{P}(M_6)&, \mathbf{P}(M_7)\ge C.\\
\end{align*}

We let $R_4(k)$ be the event that there is a left-right dual closed crossing of $B_{22}$ and a top-down dual closed crossings of $B_{24}$. Again by RSW and Harris, we obtain
\[\mathbf{P}(R_4(k))\ge C>0\]
for some constant $C$ independent of $k$.

We let
\[G_3(k) = G_3(k,\eta) = M_5(k)\cap M_6(k)\cap M_7(k),\]
and $G_4(k)$ is the reflection of $G_3$ about the $\mathbf{e}_2$-axis, as previously. The event $Q_2=Q_2(k,\eta)$ is defined as 
\begin{equation}
\label{eqn: q3def}
Q_3(k,\eta) = G_3(k)\cap G_4(k) \cap  R_4(k).
\end{equation}
By generalized FKG and independence, we have:
\[\mathbf{P}(Q_3) \ge C\mathbf{P}(M_5)^2.\]

By generalized FKG,
we have
\begin{equation}\label{eqn: capineq}
\mathbf{P}(Q_1\cap Q_2 \cap Q_3)\ge C(\eta)\mathbf{P}(Q_1\cap Q_2).
\end{equation}


On the event 
\begin{equation}\label{eqn: q4def}
Q_4=Q_4(k,\eta)=Q_1\cap Q_2\cap Q_3,
\end{equation}
let $W_2$ be the set of $e\in B_{15}$ such that $e$ has three disjoint arms, two of which are open and connected to the open crossings of $B_{17}$ and $B_{18}$, respectively, and one closed, and connected to the closed vertical crossing of $B_{23}$. We apply the second moment method (inequality \eqref{eqn: PZinequality}), to the number $Z_2=\#W_2$, conditionally on the event $Q_4$.

If $Q_4$ holds and $e \in B_{16}$ has three arms inside the rectangle $B_{17}\cup B_{19} \cup B_{25}$: one open and connected to the horizontal open crossing of $B_{17}$, another open arm, connected to the horizontal open crossing of $B_{18}$, and a closed arm connected to the vertical closed dual crossing of $B_{23}$, then $e\in W_2$. We denote the set of such edges $e\in B_{16}$ by $W_2^*$, and let $Z_2^*=\# W_2^*$. We have the following:
\begin{prop}\label{prop: q5}
Let 
\begin{equation}
\label{eqn: q5def}
Q_5 =Q_5(k,c) = \{Z_2 \ge c3^{2k}\pi(3^k)\}.
\end{equation}
There are constants $c,C_{12}>0$ such that 
\begin{equation}\label{eqn: q4ineq}
\mathbf{P}(Q_5 \mid Q_4) \ge C_{12}.
\end{equation}
for all $\delta=\eta-1$, $n \geq 1$ and $k \in \{K(\eta), \ldots, \lfloor (C_4/4) \log n \rfloor \}$.
\end{prop}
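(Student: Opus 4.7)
The plan is to apply the Paley--Zygmund second moment inequality to the random variable $Z_2^* = \# W_2^* \leq Z_2$ under the conditional measure $\mathbf{P}(\,\cdot \mid Q_4)$. Since $\{Z_2^* \geq c\cdot 3^{2k}\pi_3(3^k)\} \subset Q_5$, it suffices to establish, with constants $c_1, c_2 > 0$ independent of $\eta$ (near $1$) and of $k$ in the stated range, the two bounds
\begin{equation*}
\mathbf{E}[Z_2^* \mid Q_4] \geq c_1 \cdot 3^{2k}\pi_3(3^k), \qquad \mathbf{E}[(Z_2^*)^2 \mid Q_4] \leq c_2 \cdot \bigl(3^{2k}\pi_3(3^k)\bigr)^2.
\end{equation*}
Once these are in hand, Paley--Zygmund yields $\mathbf{P}(Q_5 \mid Q_4) \geq c_1^2/(4c_2)$ on taking $c = c_1/2$.

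For the first moment, fix $e \in B_{16}$. The event $\{e \in W_2^*\}$ requires a three-arm configuration (two open, one closed) from $e$ inside $B_{17} \cup B_{19} \cup B_{25}$ whose tips join the open crossings of $B_{17}, B_{18}$ and the closed dual crossing of $B_{23}$ supplied by $Q_4$. Because $\eta \in (1,11/10)$, the box $B_{25}$ has linear size $\asymp 3^k$, so the local three-arm event from $e$ to $\partial B_{25}$ has probability $\asymp \pi_3(3^k)$. Arm-separation and standard gluing constructions (RSW plus the generalized FKG inequality, Lemma \ref{lma: fkg}) permit the arms to be routed into the crossings of $Q_4$ at a cost of a universal multiplicative constant, and the same decoupling shows that conditioning on $Q_4$ alters the probability only by a bounded factor. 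Hence $\mathbf{P}(e \in W_2^* \mid Q_4) \geq c\,\pi_3(3^k)$ uniformly; summing over the $\asymp 3^{2k}$ edges of $B_{16}$ yields the first moment bound.

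For the second moment, consider distinct $e, e' \in B_{16}$ with $|e-e'| \asymp r$. The event $\{e, e' \in W_2^*\}$ forces three-arm events on the disjoint balls $B(e, r/4)$ and $B(e', r/4)$, together with a three-arm continuation from scale $r$ around the pair out to scale $\asymp 3^k$ where the arms meet the crossings of $Q_4$. A BK-style decoupling combined with quasi-multiplicativity of $\pi_3$, and the same observation as above that $Q_4$ perturbs probabilities by only a bounded factor, gives
\begin{equation*}
\mathbf{P}(e, e' \in W_2^* \mid Q_4) \leq C\,\pi_3(r)^2\pi_3(r, 3^k) \asymp C\,\pi_3(r)\pi_3(3^k).
\end{equation*}
Since the number of ordered pairs in $B_{16}$ at mutual distance in a unit interval around $r$ is $\asymp 3^{2k}\cdot r$,
\begin{equation*}
\mathbf{E}[(Z_2^*)^2 \mid Q_4] \leq C \cdot 3^{2k}\pi_3(3^k) \sum_{r=1}^{3^k} r\,\pi_3(r).
\end{equation*}
Lemma \ref{lem: AB} combined with quasi-multiplicativity gives $\pi_3(r) \leq C\,(3^k/r)^{1-C_4}\pi_3(3^k)$, and therefore $\sum_{r=1}^{3^k} r\,\pi_3(r) \leq C\,3^{2k}\pi_3(3^k)$, establishing the second moment bound.

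The main technical obstacle is guaranteeing that the conditioning on $Q_4$ only perturbs the arm probabilities by a bounded factor \emph{uniformly} as $\eta \to 1$. Because $Q_4$ features crossings in regions adjacent to the domain of the three-arm event for $e \in W_2^*$, the two families of events must be separated by macroscopic annular gluing regions and then recoupled via generalized FKG or arm-separation (Lemma \ref{lem: DS}) tools of the type used throughout Section \ref{sec: eksec}. The crucial geometric point is that $B_{15}$, $B_{16}$, $B_{25}$ all have linear size $\asymp 3^k$ regardless of how close $\eta$ is to $1$, so the gluing annuli remain macroscopic. This $\eta$-independence propagates through both moment computations and is precisely the reason that the constant $c$ in the statement carries no dependence on $\eta$.
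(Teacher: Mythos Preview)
Your proposal is correct and follows essentially the same route as the paper: Paley--Zygmund applied to $Z_2^*$ under $\mathbf{P}(\cdot\mid Q_4)$, with the first moment lower bound via generalized FKG/gluing and the second moment via the standard dyadic decomposition in the mutual distance. One point where the paper is cleaner than your write-up: for the second moment you invoke ``BK-style decoupling'' and argue that conditioning on $Q_4$ perturbs probabilities by a bounded factor, but in fact the event $\{e_1,e_2\in W_2^*\}$ is contained in $A_3(e_1,m)\cap A_3(e_2,m)$ with $m=\tfrac{3^{k-1}}{16}(3-\eta)$, and these arm events depend only on edges inside $B_{25}$, which is disjoint from the edges determining $Q_4$; so the conditioning drops for free by independence, and no decoupling lemma is needed. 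Your final paragraph correctly isolates the essential geometric point---that $B_{16}$ and $B_{25}$ have size $\asymp 3^k$ uniformly in $\eta$---which is exactly why $c$ is $\eta$-independent.
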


The important point here is that $c$ does not depend on $\delta$. As $\eta \downarrow 1$, the size of the detour shrinks, whereas the lower bound in this proposition does not change.
\begin{proof}
Recall the Paley-Zygmund inequality: if $Z\ge 0$ a.s., then 
\begin{equation}
\label{eqn: PZinequality}
\mathbf{P}(Z\ge \lambda\mathbf{E}Z)\ge (1-\lambda)^2\frac{(\mathbf{E}Z)^2}{\mathbf{E}Z^2}.
\end{equation}
To apply this with $Z=Z^*_2$ and $\mathbb{P}=\mathbf{P}(\cdot \mid Q_4)$, we give an estimate for the expectation
\begin{equation}\label{eqn: firstmmt}
(1/C)3^{2k}\pi(3^k) \le \mathbf{E}[Z^*_2\mid Q_4] \le C3^{2k}\pi(3^k),
\end{equation}
and an upper bound for the second moment:
\begin{equation}\label{eqn: secondmmt}
\mathbf{E}[(Z^*_2)^2\mid Q_4] \le C3^{2\cdot 2k}(\pi(3^k))^2.
\end{equation}

For $e\in B_{16}$, let $A^*(k,e)$ be the event that $e$ has three arms to the boundary of $B_{25}$: two open arms, one to each vertical side of $B_{25}$, and a closed arm to the middle third of the bottom side of $B_{25}$. By a simple gluing construction with generalized FKG and arms-separation \cite[Theorem 11]{nolin}, we have
\begin{align*}
\mathbf{P}(e\in W_2^*\mid Q_4)&\ge C\mathbf{P}(A^*(k,e))\\
&\ge C\mathbf{P}(A_3(e,3^k)),
\end{align*}
Summing over $e\in B_{16}$, we obtain
\[\mathbf{E}[Z_2^* \mid Q_4] \ge C3^{2k}\pi(3^k).\]
This gives the lower bound in \eqref{eqn: firstmmt}.

We now estimate the second moment. A similar argument gives the upper bound for the first moment. For simplicity of notation, let
\[m=\frac{3^{k-1}}{16}(3-\eta).\]
\begin{align*}
\mathbf{E}[(Z_2^*)^2\mid Q_4] &= \sum_{e_1,e_2\in B_{16}}\mathbf{P}(e_1\in Z_2^*, e_2\in Z_2^*\mid Q_4)\\
&\le \sum_{e_1,e_2\in B_{16}}\mathbf{P}\left(A_3(e_1,m),A_3(e_2,m)\right),
\end{align*}
where in the second inequality we have used that $Q_4$ is independent of the status of edges inside $B_{25}$.

The last double sum is decomposed following an idea of Nguyen \cite{nguyen}:
\begin{equation}\label{eq: meaty_sum}
\sum_{e_1}\sum_{d=1}^{2m} \sum_{|e_1-e_2|_\infty =d}\mathbf{P}(A_3(e_1,m), A_3(e_2,m)).
\end{equation}
For $k\le l$, let $A_3(e,k,l)$ be the probability that there are 3 arms from $\partial B(e,k)$ to $\partial B(e,l)$. Note that
\[\mathbf{P}(A_3(e,k,l))=\pi_3(k,l),\]
the three-arm probability (open, open, closed) for connection across the annulus $B(l) \setminus B(k)$. For convenience, if $l<k$, we define $A_3(e,k,l)$ to be the entire sample space; that is, $A_3(k,l)$ always holds. Correspondingly,
we let $\pi_3(k,l)=1$ in this case.
Then
\begin{align*}
\mathbf{P}(A_3(e_1,m), A_3(e_2,m)) &\le \mathbf{P}(A_3(e_1,d/2),A_3(e_1,3d/2,m),A_3(e_2,d/2))\\
&\le \mathbf{P}(A_3(e_1,d/2))\mathbf{P}(A_3(e_1,3d/2,m))\mathbf{P}(A_3(e_2,d/2)) \\
&= \pi_3(d/2) \pi_3(3d/2,m)\pi_3(d/2).
\end{align*}

Returning to the sum, we find, for each $e_1$, the bound
\begin{align*}
\sum_{d=1}^{\lfloor 2m/3\rfloor} 8d \pi_3(d/2) \pi_3(3d/2,m)\pi_3(d/2) +\sum_{d=\lfloor 2m/3\rfloor +1}^{2m} 8d \pi_3(d/2)\pi_3(d/2).
\end{align*}
Now we use RSW theory to rescale some of these quantities by constant factors. First, we have
\[\pi_3(d/2)\ge C\pi_3(d).\]
If $d\ge 2m/3$, we also obtain $\pi_3(d/2)\ge C\pi_3(m)$. By quasimultiplicativity \cite[Proposition 12.2]{nolin}:
\[\pi_3(d/2)\pi_3(3d/2,m)\asymp \pi_3(m).\]
Putting all this back into the sum \eqref{eq: meaty_sum}, we find the bound
\[m\pi_3(m)\sum_{d=1}^m \pi_3(d).\]
This is bounded by $m^2 \pi_3(m)^2$. To see why, choose (by RSW) $\beta>0$ such that $\pi_3(d,m) \geq C (m/d)^\beta$, and use quasimultiplicativity:
\[
\sum_{d=1}^m \pi_3(d) \asymp \pi_3(m) \sum_{d=1}^m \pi_3(d,m)^{-1} \leq C \frac{\pi_3(m)}{m^\beta} \sum_{d=1}^m d^\beta \leq Cm \pi_3(m)\ .
\]

Summing over $e_1$, we obtain an overall bound 
\[
\mathbf{E}[(Z_2^*)^2\mid Q_4]  \le C m^4 (\pi_3(m))^2 \le C3^{2\cdot 2k} (\pi_3(3^k))^2.
\]
A similar, but simpler argument gives the estimate
\[\mathbf{E}[Z_2^* \mid Q_4] \le C m^2(\pi_3(m))^2 \le C3^{2k}\pi(3^k),\]
concluding the proof.
\end{proof}

\subsection{Definition of $E_k$}

We can now give the definition of the events $E_k$:
\begin{df}\label{def: ekdef}
Let $\epsilon>0$ and $\eta=\eta(\epsilon)$ be given by \eqref{eqn: q2ineq} in Lemma \ref{lma: thin}. Then
\begin{equation}
E_k=E_k(\eta,\epsilon) = Q_1\cap Q_2 \cap Q_3 \cap Q_5 \text{ for } k \geq K(\eta), \label{eqn: ekdef}
\end{equation}
where $Q_1$ is defined in \eqref{eqn: q1def}, $Q_2$ in \eqref{eqn: q2def}, $Q_3$ in \eqref{eqn: q3def}, 
and $Q_5$ appears in \eqref{eqn: q5def}.
\end{df}

Combining \eqref{eqn: capineq}, Lemma \ref{lma: thin}, Proposition \ref{prop: q5} and inequality \eqref{eq: Q_1_bound}, there is a constant $C_{13}(\epsilon)$ such that
\begin{equation}\label{eqn: ekbound}
\mathbf{P}(E_k)\ge C_{13}(\epsilon)>0 \text{ for } k \geq K(\eta)\ .
\end{equation}

To derive the lower bound
\begin{equation}
\mathbf{P}(E_k \mid A_3(d))\ge C_{7}(\epsilon)>0, \label{eqn: condek}
\end{equation}
for $k \geq K(\eta)$ such that $\eta 3^k \le d$, we use a gluing construction and arms separation \cite[Theorem 11]{nolin}, together with the equivalence \cite[Proposition 12, 2.]{nolin}
\[\mathbf{P}(A_3(3^{k-1}))\mathbf{P}(A_3(\eta 3^k,d))\asymp \mathbf{P}(A_3(d)).\]
Note that the definition of $E_k$ implies $A_3(3^{k-1},\eta 3^k)$ and that the connections across the annulus are easily extended.

\subsection{Proof of Proposition \ref{prop: whyek}}
\label{sec: essential}
\begin{proof}
We will show that if the event
\[E_k(e)\cap \{e\in \hat{\gamma}_n \} \]
 occurs for $k \in \{ K(\eta), \ldots, \lfloor (C_4/8)\log n \rfloor\}$, then there is a ``short'' detour around the origin in the sense that
\[\mathcal{S}(e)\neq \emptyset.\]

On $E_k(e)$, there is a closed circuit $\mathcal{C}_2$ with two defects near the five-arm points in $B_1$ and $B_1'$ inside $Ann_2$. We denote these (unique) points by $x$ and $y$. Since $e$ lies on the open, self-avoiding, circuit $\gamma_n$, the latter must pass through each of the two five-arm points in the definition of $M_1$, resp. $M_1'$, exactly once. We denote the portion of $\gamma_n$ between $x$ and $y$, and inside $\mathcal{C}_2$, by $q$. We also let 
\[p=\tilde{\alpha}_2,\]
where $\tilde{\alpha}_2$ is from Definition~\ref{def: outermost}.

\begin{claim}\label{claim: clam}
On $E_k(e) \cap \{e \in \hat \gamma_n\}$, $e^*$ has a closed connection to the bottom of $B(e,\eta 3^k)$ and to the bottom arc of the closed circuit $\mathcal{C}_2$ with defects in $Ann_2$.
\begin{proof}
Recall that $\Gamma(k,\eta)=Ann_2 \cap\left( \mathbb{R}\times [-\frac{3^k}{2}(\eta-1),\infty)\right)$. The open (detour) arc $\sigma_o$ in $\Gamma(k,\eta)$ between the two five-arm points and the closed arc $\sigma_c$ through $Ann_2\setminus \Gamma(k,\eta)$ form a circuit around $e$ (we can connect them by two line segments of length $1/\sqrt 2$ to make their union a closed curve). The closed arm emanating from $e^*$ reaches $\partial B(e, \eta 3^k)$, so it must intersect $\sigma_c$, which intersects the closed vertical crossing of $B_{24}$. This crossing is connected to the bottom side of $B(e,\eta 3^k)$.
\end{proof}
\end{claim}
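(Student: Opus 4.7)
The plan is to combine the defining property of an innermost-circuit edge with a Jordan-curve argument built from the closed and open arcs that $E_k(e)$ produces around $e$. Since $e\in\hat\gamma_n$, the innermost-circuit characterization recalled in Section~\ref{sec: defs} provides a closed dual path $\pi^*$ from $e^*$ all the way to $B(n)^*$; and because $e\in\hat A(n)$ with $k\le\lfloor(C_4/8)\log n\rfloor$, we have $\eta 3^k\ll n$, so $\pi^*$ in particular exits $B(e,\eta 3^k)$.

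The next step is to construct a Jordan curve $\mathcal{J}\subset B(e,\eta 3^k)$ enclosing $e$, whose ``upper'' half is primal-open and whose ``lower'' half is dual-closed. Under $E_k(e)$, the circuit $\mathcal{C}_2$ is closed with exactly two defects at the five-arm points $x\in e+B_1$ and $y\in e+B_1'$, and by Definition~\ref{def: outermost} the outermost open arc $\sigma_o=\tilde\alpha_2$ joins $x$ to $y$ through the upper portion $e+\Gamma(k,\eta)$ of $e+Ann_2$. Let $\sigma_c$ denote the arc of $\mathcal{C}_2$ joining $x$ to $y$ through $(e+Ann_2)\setminus(e+\Gamma(k,\eta))$. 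Bridging the defects at $x$ and $y$ by two unit diagonal segments turns $\sigma_o\cup\sigma_c$ into the desired Jordan curve $\mathcal{J}$, which lies in $B(e,\eta 3^k)$ and surrounds $e$.

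Now invoke planarity. The path $\pi^*$ starts at $e^*\in\mathrm{int}(\mathcal{J})$ and eventually leaves $B(e,\eta 3^k)\supset\mathcal{J}$, so it must cross $\mathcal{J}$. Since $\sigma_o$ is primal-open and $\pi^*$ is dual-closed, $\pi^*$ cannot meet $\sigma_o$; the two short bridges at the defects similarly cannot be crossed by $\pi^*$, because they correspond precisely to the open primal edges realizing the defects of $\mathcal{C}_2$. Therefore $\pi^*$ must exit $\mathcal{J}$ through $\sigma_c$, producing the desired closed dual connection from $e^*$ into the bottom arc of $\mathcal{C}_2$.

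Finally, to extend this into a closed connection reaching the bottom side of $B(e,\eta 3^k)$, use the auxiliary closed dual crossings included in $E_k(e)$ via $R_3$ and $R_4$ (the closed crossings of $e+B_{14}$, $e+B_{22}$, and in particular the closed top-to-bottom crossing of $e+B_{24}$, whose lower end touches the bottom side of $B(e,\eta 3^k)$). A standard planarity/gluing argument strings these together with $\sigma_c$ to form a single closed dual path reaching the bottom side of $B(e,\eta 3^k)$; concatenating with $\pi^*$ truncated at its first meeting with $\sigma_c$ then finishes the proof. The main subtlety is the topological step that $\pi^*$ cannot bypass $\sigma_c$ through the defects, which is resolved by $\sigma_o$ acting as an open primal barrier that the dual-closed path $\pi^*$ is forbidden to cross.
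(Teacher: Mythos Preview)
Your proof is correct and follows essentially the same approach as the paper's: form a Jordan curve from the open arc $\sigma_o$ and the closed arc $\sigma_c$ around $e$, observe that the closed dual arm from $e^*$ (which must exit $B(e,\eta 3^k)$ since $e\in\hat\gamma_n$) is forced to cross $\sigma_c$, and then route to the bottom of $B(e,\eta 3^k)$ via the closed vertical crossing of $e+B_{24}$. Your version is more explicit about the topological details (the bridges at the defects, the Jordan-curve reasoning) and invokes additional closed crossings ($B_{14}$, $B_{22}$) that the paper does not need, but the core argument is identical.
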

It is important to note that since the closed arm from $e^*$ intersects the bottom of $B(e,\eta 3^k)$ and $e \in \hat \gamma_n$, the bottom part of the closed circuit in $Ann_1 \setminus \Gamma(k,\eta)$ must be connected to $B(n)$. This is what forces the ``orientation'' of the box $B(e,d)$ to be such that $p$ is indeed a detour off the innermost circuit (see Lemma~\ref{eqn: pingn} below).

\begin{claim}The open arc $p$ is disjoint from $q$ except for the five-arm points $x$ and $y$.
\begin{proof}
This follows from the definition of the five arm events. From this, we obtain the existence of an open crossing $\alpha$ of $B(e,\eta 3^k)$ inside 
\[B_{17}\cup B_{15} \cup B_{18} \cup B(e,\eta 3^{k-1}) \cup B_{17}'\cup B_{15}' \cup B_{18}'.\]
whose only intersection with the outermost arc $p$ is $x$ and $y$. By the previous claim, every dual edge touching $q$ is connected to the bottom of $B(e,\eta 3^k)$ by a closed dual path. This implies that $q$ lies in the region below the Jordan curve $\alpha$, which separates the box $B(\eta 3^k)$ into two connected components. In particular, $q$ is disjoint from $p$, except at its endpoints.
\end{proof}
\end{claim}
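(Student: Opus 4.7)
The plan is to produce an open left--right Jordan crossing $\alpha$ of $B(e,\eta 3^k)$ whose only contact with $p$ occurs at the two five-arm points $x$ and $y$, and then to use the closed-arm information from Claim \ref{claim: clam} to force $q$ to lie entirely in the connected component of $B(e,\eta 3^k)\setminus\alpha$ opposite to $p$.

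First, I would assemble $\alpha$ out of the open pieces guaranteed by $E_k(e)$. Starting from the left side of $B(e,\eta 3^k)$, concatenate: the open left--right crossing of $B_{17}$, the open crossing of $B_{18}$ (both from $M_5$), an open arm from the four-arm point in $B_2$ across $B_4$, a left--right open crossing inside the central box $B(e,\eta 3^{k-1})$, and then the mirror-image pieces from $B_4'$, $B_{18}'$, $B_{17}'$ to the right side. The gluing of these pieces into a single self-avoiding open arc is standard and uses only edges strictly below the level $y=\tfrac{3^k}{2}(\eta-1)$ except in a small neighborhood of $x$ and $y$, where the arc enters and exits via the open arms $\gamma_2,\gamma_3$ (and their primed versions) of the five-arm events $M_1,M_1'$. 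By definition of those five-arm points, the open arm $\gamma_2$ (which feeds into $p$ via $M_3$) and the open arm $\gamma_3$ (which feeds into $\alpha$ via $M_4$) are vertex-disjoint and separated around $x$ by the closed arms $\gamma_1,\gamma_4$; the analogous statement holds at $y$. Hence $\alpha$ and $p$ share no vertex other than $x$ and $y$.

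Next, regard $\alpha$ as a Jordan arc from the left edge to the right edge of $B(e,\eta 3^k)$. It splits the box into an ``upper'' component $U$ and a ``lower'' component $L$. By construction the path $p=\tilde\alpha_2$ leaves $x$ and $y$ vertically into $\Gamma(k,\eta)$ and stays in $Ann_2\cap U$, so $p\setminus\{x,y\}\subset U$. To place $q$ I would apply Claim \ref{claim: clam}: every edge of $\gamma_n$ has a dual closed arm to $B(n)^\ast$, and on $E_k(e)\cap\{e\in\hat\gamma_n\}$ that closed arm must reach the bottom side of $B(e,\eta 3^k)$ (via the closed crossing of $B_{24}$ and its reflection, together with the bottom half of $\mathcal{C}_2$, which is anchored to $B(n)^\ast$ because of the orientation forced on $e$). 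Since an open path and a closed dual path cannot cross in the plane, no edge of $q$ can lie in $U$; hence $q\setminus\{x,y\}\subset L$, and the claim follows.

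The only genuinely non-routine step is the construction of $\alpha$: one must check that the various open crossings listed in $E_k(e)$ can be spliced into a single self-avoiding arc that (i) exits $B(e,\eta 3^k)$ exactly on the left and right sides, (ii) passes through $x$ and $y$ using the correct arms of the five-arm events, and (iii) avoids $p$ elsewhere. This is purely topological and uses the fact, recorded in Definition \ref{def: fivearm}, that at each five-arm point the two ``outward'' open arms are separated by closed arms from the two ``inward'' open arms, which guarantees that $\alpha$ and $p$ are locally disjoint near $x$ and $y$. Everything else is a direct application of planar duality together with Claim \ref{claim: clam}.
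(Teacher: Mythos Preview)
Your approach is essentially the paper's: build an open left--right crossing $\alpha$ of $B(e,\eta 3^k)$ that meets $p$ only at the five-arm points, then use Claim~\ref{claim: clam} and planar duality to trap $q$ below $\alpha$ and $p$ above it. Two small imprecisions to fix: $B_{17}$ does not touch the left side of $B(e,\eta 3^k)$, so $\alpha$ must first pass through $B_1$ via the arms $\gamma_5,\gamma_3$ (not $\gamma_2,\gamma_3$ --- $\gamma_2$ is the arm feeding $p$, as you yourself note in the next sentence) and through $B_3$ before reaching $B_{17}$; and the open crossing of the central box $B(e,\eta 3^{k-1})$ is supplied not by $E_k$ directly but by the portion of $\gamma_n$ forced through the defects of the inner closed circuit $\mathcal{C}_1$ in $Ann_1$.
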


It follows that $p \cup q$ is a Jordan curve lying entirely inside the box $B(e,\eta 3^k)$. This in turn implies
\begin{lma} \label{eqn: pingn}
$p$ lies outside  $\mathrm{int} \gamma_n$.
\begin{proof}
The dual edge $\{x-(1/2)(\mathbf{e}_1 + \mathbf{e}_2), x-(1/2)(\mathbf{e}_1-\mathbf{e}_2)\}$ crosses $\gamma_n$ and so one endpoint is in each component of the complement of $\gamma_n$. The top endpoint can be connected to $p$ and the bottom one can be connected to the bottom arc of $\mathcal{C}_2$, both without crossing $\gamma_n$. By Claim~\ref{claim: clam}, the bottom is in the interior of $\gamma_n$, so $p$ is in the exterior of $\gamma_n$. 
\end{proof}
\end{lma}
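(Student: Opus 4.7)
My plan is to apply the Jordan curve theorem to the self-avoiding open circuit $\gamma_n$ and to locate $p$ on the exterior side via a single dual edge crossing $\gamma_n$ at the five-arm point $x$. Since $\mathbb{R}^2 \setminus \gamma_n$ has two connected components with $B(n) \subset \text{int}\,\gamma_n$, it suffices to produce a single vertex of $p$ that is connected in $\mathbb{R}^2 \setminus \gamma_n$ to some point known to lie in $\text{ext}\,\gamma_n$.

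First I would identify the primal edge $\{x - \mathbf{e}_1, x\}$ as an edge of $\gamma_n$. Since $p = \tilde\alpha_2$ lies in $\Gamma(k,\eta)$ and must enter at the boundary point $x$ heading upward, its first edge is $\{x, x+\mathbf{e}_2\}$, i.e.\ the open arm $\gamma_2$ of $M_1$. Combining this with the disjointness $p \cap q = \{x, y\}$ from the preceding claim and the fact that the only open edges at $x$ are the three arms $\gamma_2, \gamma_3, \gamma_5$, the two edges of $\gamma_n$ incident to $x$ must be $\gamma_5 = \{x - \mathbf{e}_1, x\}$ and $\gamma_3 = \{x, x + \mathbf{e}_1\}$. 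Consequently the vertical dual edge
\[
f^* = \{x + (-1/2,-1/2),\, x + (-1/2,\, 1/2)\}
\]
crosses $\gamma_n$, and its lower and upper endpoints $u^-, u^+$ lie in the two distinct components of $\mathbb{R}^2 \setminus \gamma_n$.

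Next I would argue $u^- \in \text{int}\,\gamma_n$. Claim~\ref{claim: clam} gives a closed dual path from $e^*$ along $\sigma_c$ to the bottom of $B(e, \eta 3^k)$, and $u^-$ lies on $\sigma_c$ (or is joined to it by a single closed dual edge). Because $e \in \gamma_n$, the innermost circuit characterization also furnishes a closed dual connection from $e^*$ to $B(n)^*$. Closed dual paths cannot cross the open primal circuit $\gamma_n$, so the union of these two closed paths lies in a single component of $\mathbb{R}^2 \setminus \gamma_n$, which meets $B(n)^*$ and hence equals $\text{int}\,\gamma_n$. Therefore $u^- \in \text{int}\,\gamma_n$ and $u^+ \in \text{ext}\,\gamma_n$.

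Finally, I would connect $u^+$ to an interior vertex of $p$ by a short arc through the bounded region enclosed by the Jordan curve $\tilde\alpha_1 \cup \tilde\alpha_2$ (extended by short bumps near $x$ and $y$). The relative interior of that region avoids $\gamma_n$: the portion $q$ of $\gamma_n$ between $x$ and $y$ lies below $p$ by the preceding claim, the rest of $\gamma_n$ lies outside $\mathcal{C}_2$, and the closed dual shield $\tilde\alpha_1$ cannot cross the open circuit $\gamma_n$. Hence $u^+$ and $p$ share the component $\text{ext}\,\gamma_n$, yielding $p \subset \text{ext}\,\gamma_n$. The main subtlety is step one---pinning down the two edges of $\gamma_n$ at $x$ from the five-arm structure---after which the rest is routine Jordan curve bookkeeping.
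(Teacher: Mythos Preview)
Your proof is correct and follows essentially the same route as the paper's: both single out the vertical dual edge $f^* = \{x-(1/2)(\mathbf{e}_1+\mathbf{e}_2),\, x-(1/2)(\mathbf{e}_1-\mathbf{e}_2)\}$ crossing $\gamma_n$ at the five-arm point, place the lower endpoint in $\mathrm{int}\,\gamma_n$ via Claim~\ref{claim: clam} and the closed dual connection to $B(n)^*$, and connect the upper endpoint to $p$ without touching $\gamma_n$. Your write-up simply spells out more of the local bookkeeping (why $\{x-\mathbf{e}_1,x\}\in\gamma_n$, and why the short connection from $u^+$ to $p$ avoids $\gamma_n$) that the paper leaves implicit. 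One small caveat: your phrase ``the only open edges at $x$ are the three arms $\gamma_2,\gamma_3,\gamma_5$'' is not literally guaranteed by Definition~\ref{def: fivearm}; the cleaner justification that $\gamma_n$ uses the two horizontal edges at $x$ is that the closed dual arms $\gamma_1,\gamma_4$ belong to $\mathcal{C}_2$ and leave only that gap for the open circuit to pass through.
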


We can now prove Proposition \ref{prop: whyek}, by setting $K(\epsilon) = K(\eta)$. Letting $P=p$ and $Q=q$, $w_0=x$, and  $w_M=y$, Lemma~\ref{eqn: pingn} implies that Condition 1. in Definition~\ref{def: shieldeddetours} is satisfied. Condition 2. holds by the definition of the five-arm points $x$ and $y$ (Definition \ref{def: fivearm}). Condition 3. follows because $P\cup Q =p\cup q$ is contained in the box $B(e,\eta 3^k)$, which does not contain the origin. Condition 4. follows from the existence of the closed dual arc $\tilde{\alpha}_1$, which is implied by the event $Q_1$ \eqref{eqn: q1def}. Condition 5. holds because of the conjunction of $Q_2$ \eqref{eqn: q2def} and $Q_5$ \eqref{eqn: q5def} (choose $\epsilon \cdot c$ for $\epsilon$, where $c$ is from the definition of $Q_5$).\end{proof}

\section{Probability Estimate}
Our goal in this section is to derive the estimate \eqref{eqn: Svest}. We recall it here:
\begin{equation}
\label{eqn: decay}
\mathbf{P}(\mathcal{S}(e)=\emptyset \mid e\in \hat \gamma_n)< \epsilon^2
\end{equation}
for some $n\ge C_5$. Once we show this estimate, Lemmas~\ref{circuit} and \ref{lem: piempty}, and Proposition~\ref{prop: four}, we can conclude Theorem~\ref{thm: mainthm}.

For $k=K(\eta),...,\lfloor (C_4/8)\log n \rfloor$, we let $E_k(e)=\tau_{-e}E_k$ be the ``detour event'' inside the annulus $Ann_1(e,k)=B(e,\eta 3^k)\setminus B(e,3^{k-1})$. (Here $\eta$ is slightly bigger than 1 and $K=K(\eta)$ is a constant depending on $\eta$ and which is defined under \eqref{eq: n_k_choice}.) It is defined precisely in Section \ref{sec: eksec} (see Definition \ref{def: ekdef}). The property we need here is proved in Proposition \ref{prop: whyek}: if $E_k(e)$ occurs and $e \in \hat \gamma_n$, then $\mathcal{S}(e) \neq \emptyset$. Thus,
\begin{equation}
\label{eqn: dodo}
\mathbf{P}(\mathcal{S}(e)=\emptyset \mid e\in \hat \gamma_n) \le \mathbf{P}((\cup_{k=K}^{\lfloor C_4/8 \log n\rfloor} E_{2k}(e))^c \mid e\in \hat \gamma_n).
\end{equation}

\subsection{Conditioning on 3-arm event in a box}
\label{sec: conditioning3}
The next step is to replace the conditioning in \eqref{eqn: dodo} by conditioning on a ``three arm'' event:
\begin{prop}\label{prop: switch}
There is a constant $C$ such that 
\begin{equation}\label{eqn: conditionswitch}
\mathbf{P}(\cap_{k=K}^{\lfloor C_4/8 \log n\rfloor} E_{2k}(e)^c \mid e\in \hat{\gamma}_n) \le C \mathbf{P}(\cap_{k=K}^{\lfloor C_4/8 \log n\rfloor} E_{2k}(e)^c \mid A_3(e,\mathrm{dist}(e,\partial A(n))),
\end{equation}
where $A_3(e,m)$ is the probability that $e$ has three arms, two open and one closed, to distance $m$ from $e$.
\end{prop}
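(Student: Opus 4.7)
The plan is to exploit the Morrow--Zhang characterization of the innermost circuit: $e \in \gamma_n$ iff $e^*$ is connected to $B(n)^*$ by a closed dual path and $e$ lies on an open circuit around $B(n)$ which, upon removal of $e$, becomes a self-avoiding path. In particular this forces the three-arm event at $e$ on every scale up to $\dist(e,\partial A(n))$, so that with $d = \dist(e,\partial A(n))$ we have the containment
$$\{e \in \hat\gamma_n\} \subset A_3(e,d).$$
Writing $B = \cap_k E_{2k}(e)^c$, this immediately yields the numerator bound
$$\mathbf{P}(B \cap \{e \in \hat\gamma_n\}) \le \mathbf{P}(B \cap A_3(e,d)).$$

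The substance of the proof is the matching lower bound on the denominator, namely that there exists $c>0$, uniform in $n$ and in $e\subset \hat A(n)$, such that
$$\mathbf{P}(e \in \hat\gamma_n) \ge c\, \mathbf{P}(A_3(e,d)). \qquad (*)$$
This is proved by the standard arm-extension technique. First apply Lemma \ref{lem: DS} on the box $B(e,d)$ to replace $A_3(e,d)$, at the cost of a constant factor in probability, by its ``well-separated'' version in which the two open arms and the closed dual arm exit $B(e,d)$ through prescribed landing zones on $\partial B(e,d)$, which may be chosen so that the closed arm exits on the side of $B(e,d)$ facing $B(n)$ and the two open arms exit on the two lateral sides. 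Outside $B(e,d)$, the region $A(n)\setminus B(e,d)$ has bounded aspect ratio since $d = \dist(e,\partial A(n))$. Using a bounded number of RSW rectangle crossings in this region, together with the generalized FKG inequality (Lemma \ref{lma: fkg}), we construct with probability bounded away from zero a closed dual path joining the closed landing zone to $B(n)^*$ and two open arcs joining the open landing zones into a circuit around $B(n)$ through $e$. The Morrow--Zhang characterization then shows $e \in \gamma_n$, and since $e \subset \hat A(n)$ by hypothesis, in fact $e \in \hat\gamma_n$. This proves $(*)$.

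Combining the two steps,
$$\mathbf{P}(B \mid e\in \hat\gamma_n) = \frac{\mathbf{P}(B \cap \{e\in\hat\gamma_n\})}{\mathbf{P}(e\in\hat\gamma_n)} \le \frac{\mathbf{P}(B \cap A_3(e,d))}{c\,\mathbf{P}(A_3(e,d))} = \frac{1}{c}\,\mathbf{P}(B \mid A_3(e,d)),$$
which is the proposition with $C = 1/c$.

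The main obstacle is organizing the arm-separation/gluing construction in $(*)$ so that the extensions respect the global topology: the closed dual arm must genuinely reach $B(n)^*$ (not merely $\partial A(n)^*$), and the two open arms must close into a circuit enclosing $B(n)$ rather than just connecting to each other locally. Choosing the landing zones consistently with the orientation of $B(e,d)$ relative to $B(n)$, and routing the RSW crossings so that one open extension goes ``above'' and the other ``below'' $B(n)$, handles this; these are standard manipulations for planar critical percolation, and the relevant RSW inputs have already been used throughout the paper.
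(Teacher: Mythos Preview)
Your numerator bound $\{e\in\hat\gamma_n\}\subset A_3(e,d)$ is fine, but the denominator bound $(*)$ is false for edges near $\partial A(n)$, and this is where the argument breaks. The claim that ``$A(n)\setminus B(e,d)$ has bounded aspect ratio'' is incorrect when $d=\dist(e,\partial A(n))\ll n$. Take $e$ at distance $d$ from the top side of $B(3n)$, far from the corners. For $e\in\gamma_n$ the three arms must not only reach $\partial B(e,d)$ but then continue, confined to the half-space below $\partial B(3n)$, to a distance of order $n$ before there is room for your RSW circuit. That half-plane three-arm crossing has probability $\pi_3^H(d,n)\asymp (d/n)^2$, so in fact $\mathbf{P}(e\in\gamma_n)\asymp \pi_3(d)\,(d/n)^2$, which is \emph{not} $\ge c\,\pi_3(d)$ uniformly. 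Since $e\in\hat A(n)$ only guarantees $d\ge n^{C_4/2}$ with $C_4<1$, this ratio does go to zero.

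The paper's remedy is precisely to keep track of these half-plane arm factors so they cancel between numerator and denominator. Writing $B(e)\subset H(e)\subset K(e)$ for nested boxes adapted to the position of $e$, one defines ordered half-plane three-arm events $F_1(e)$ (across $H(e)\setminus B(e)$) and $F_2(e)$ (across $K(e)\setminus H(e)$), observes that $\{e\in\hat\gamma_n\}\subset A_3(e,d)\cap F_1(e)\cap F_2(e)$, and that $F_1,F_2$ depend only on edges outside $B(e,d)$ and are therefore independent of $B\cap A_3(e,d)$. This sharpens your numerator to
\[
\mathbf{P}(B,e\in\hat\gamma_n)\le \mathbf{P}(B,A_3(e,d))\,\mathbf{P}(F_1(e))\,\mathbf{P}(F_2(e)).
\]
The gluing construction you describe is then used, but only from $\partial K(e)$, where the remaining geometry genuinely has bounded aspect ratio; combined with arm separation on $A_3$, $F_1$, $F_2$ separately, this gives
\[
\mathbf{P}(e\in\hat\gamma_n)\ge c\,\mathbf{P}(A_3(e,d))\,\mathbf{P}(F_1(e))\,\mathbf{P}(F_2(e)),
\]
and the half-plane factors cancel in the ratio. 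The exact shape of $H(e),K(e)$ depends on where $e$ sits (near a side, near a corner, in the bulk), which is why the paper splits $A(n)$ into several regions; your proposal collapses all of this into a single RSW step, and that step does not go through.

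(A minor point: Lemma~\ref{lem: DS} in this paper is the decoupling lemma for circuits with defects, not an arm-separation statement; the relevant input for separating landing zones is \cite[Theorem~11]{nolin}.)
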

We will omit some details, since most of the arguments are lengthy but standard. To prove \eqref{eqn: conditionswitch}, we use a gluing construction that depends on the position of $e$ inside the annulus $A(n)$, which we split into a number of different regions:
\[A(n) = A\cup B \cup C\cup D\cup E.\]
Region $B$ is is the disjoint union of four rectangles:
\begin{align*}
B&=[-2n,2n]\times [\frac{5}{2}n,3n] \cup [-2n,2n]\times[-3n,-\frac{5}{2}n]\\
&\quad \cup [-3n,-\frac{5}{2}n]\times [-2n,2n]\cup [\frac{5}{2}n,3n]\times[-2n,2n].
\end{align*}
Region $A$ is
\[A = \left( B(3n)\setminus B(5/2n) \right) \setminus B.\]
Region $C$ is given by
\[C=B(5n/2)\setminus B(3n/2).\]
Region $D$ is
\begin{align*}
D &= [-n/2,n/2]\times [n,3n/2] \cup [-n/2,n/2]\times [-3n/2,-n]\\
&\quad \cup [-3n/2,-n]\times [-n/2,n/2] \cup [n,3n/2]\times[-n/2,n/2].
\end{align*}
Finally, region $E$ is given by
\[E = (B(3n/2) \setminus B(n))\setminus D. \]

\begin{figure}
\centering
\includegraphics[scale = 0.5]{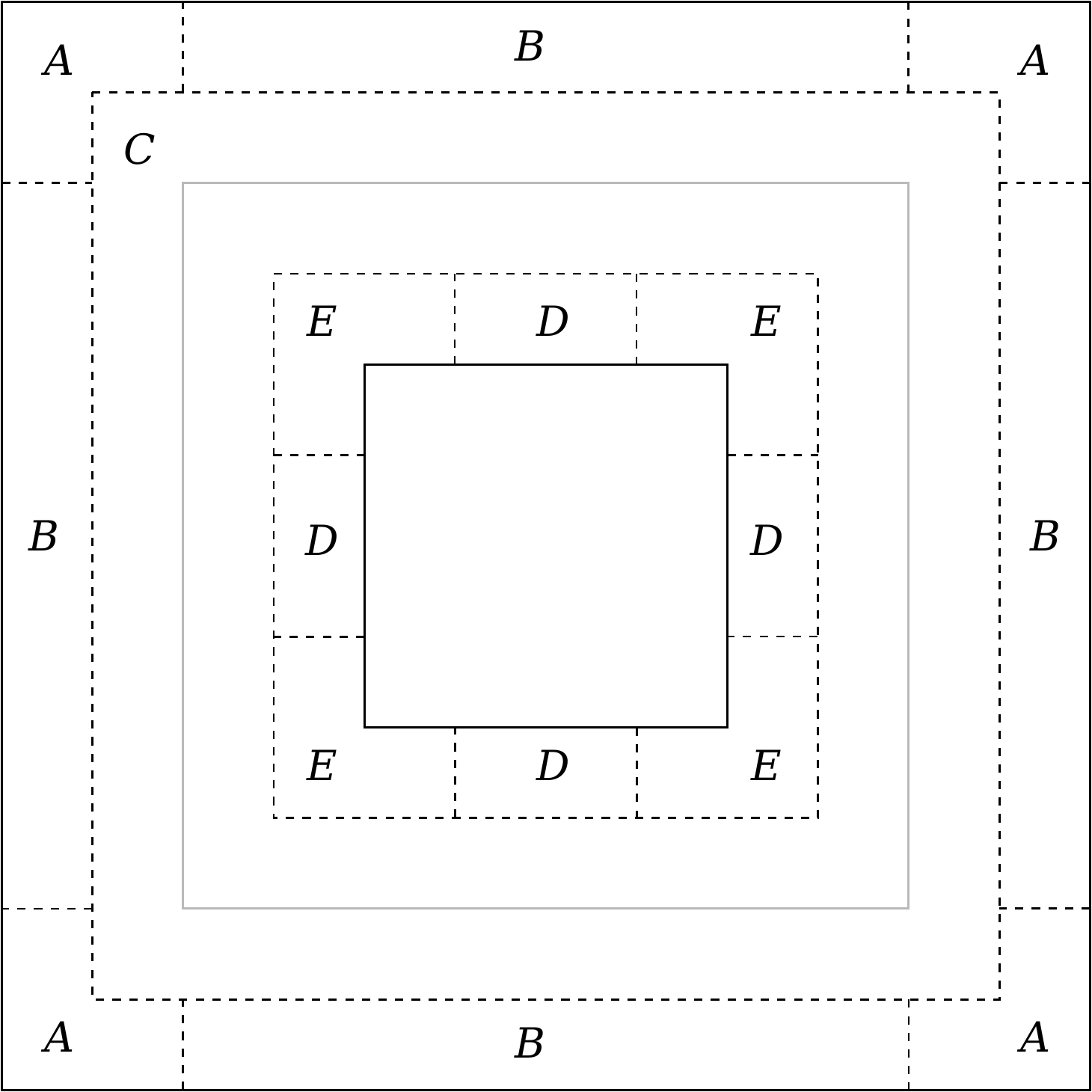}
\caption{The annulus $A(n)$ is split in a number of regions.}
\label{fig: regions}
\end{figure}

In each case, we use an adapted gluing construction to connect $e$ to $B(n)$ by a closed dual path inside an open circuit around $B(n)$. The proof will be different, depending on which region the edge $e$ lies in. Figure \ref{fig: regions} depicts the partitioning we will use.

We concentrate on the proof of \eqref{eqn: conditionswitch} in case $e\in A$. Furthermore, we assume by symmetry that $e$ is in the top-right component of $A$. We only consider $e\in \hat \gamma_n $, so 
\begin{equation}\label{eq: d_e}
d(e)=\mathrm{dist}(e,\partial A(n))\ge n^{C_4/2}.
\end{equation}
Since $e\in A$, we have $d(e)=\mathrm{dist}(e,\partial B(3n))$.

Denote by 
\[L(e)\in \{\{3n\}\times[-3n,3n], [-3n,3n]\times \{3n\}\},\]
the side of $\partial B(3n)$ such that $d(e) = \mathrm{dist}(e,L(e))$. If there is more than one possible choice, choose the earliest in the list above.

Let $B(e)$ be the box of side length $2d(e)$ centered at $e$. We define
\[d'(e) = \mathrm{dist}(e,\partial B(3n) \setminus L(e)),\]
and 
\[H(e) = B\left(e^L+d'(e) \cdot \frac{e-e_L}{|e-e_L|},d'(e)\right),\]
where $e^L$ is the projection of $e$ onto $L(e)$, and let $K(e)$ be the box
\[K(e) = [n,3n]\times [n,3n].\]

We now have
\begin{lma}\label{pathorder}
If $e\in \hat \gamma_n$, then the event $F_1(e)$ occurs: There are two open paths and one closed dual path joining $\partial B(e) \setminus \partial B(3n)$ to $\partial H(e) \setminus \partial B(3n)$ inside $H(e) \setminus B(e)$, appearing in the order 
\begin{equation}\label{eqn: order}
\text{open},\text{closed}, \text{open}
\end{equation}
(on the boundary of $H(e)$). In particular, the closed path is separated from $\partial B(3n)$ by the two open paths.

Similarly, the event $F_2(e)$ occurs: there two open paths and a closed dual path from $\partial H(e)\setminus \partial B(3n)$ to $\partial K(e)\setminus \partial B(3n)$ inside $K(e)\setminus H(e)$. These paths also appear in the order \eqref{eqn: order}, with the closed path separated from $\partial B(3n)$ by the open paths.
\end{lma}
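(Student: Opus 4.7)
The plan is to observe that this statement is purely topological, and becomes almost automatic once one unpacks the innermost-circuit characterization recalled in Section \ref{sec: defs}. On $\{e \in \hat \gamma_n\}$ one gets three pairwise disjoint arms from $e$: two open paths $\alpha_1, \alpha_2$ obtained as the two halves of $\gamma_n \setminus \{e\}$, and a closed dual path $\beta$ from $e^*$ to $B(n)^*$ (automatically disjoint from $\alpha_1 \cup \alpha_2$ by color). Since all three lie in $A(n) \subset B(3n)$, none can touch $\partial B(3n)$; in particular each arm must exit $B(e)$ through $\partial B(e) \setminus \partial B(3n)$ and later cross $\partial H(e) \setminus \partial B(3n)$. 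Taking the sub-arc of each path between its first exit from $B(e)$ and its first subsequent crossing of $\partial H(e) \setminus \partial B(3n)$ gives the three connections required for $F_1(e)$ inside $H(e) \setminus B(e)$; the identical argument in $K(e) \setminus H(e)$ gives $F_2(e)$.

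The heart of the lemma is the cyclic order \eqref{eqn: order} on $\partial H(e)$, equivalently the claim that $\beta$ is separated from $\partial B(3n) \cap \partial H(e)$ by the two open arms. I would prove this with a Jordan-curve argument. Restricted to $H(e)$, the union $\alpha_1 \cup \{e\} \cup \alpha_2$ contains a Jordan arc $J \subset \gamma_n$ whose endpoints are the first exits of $\alpha_1, \alpha_2$ from $H(e)$, both lying on $\partial H(e) \setminus \partial B(3n)$. The complement $H(e) \setminus J$ has two open components, and exactly one of them abuts $L(e) \subset \partial B(3n)$. The other component is the ``$\text{int } \gamma_n$'' side: since $H(e) \subset K(e) = [n,3n]^2$ is disjoint from $B(n)$ while $\text{int } \gamma_n \supseteq B(n)$, any continuous path from a point of $B(n)$ into $H(e)$ must enter through some point of $\partial H(e) \setminus L(e)$, so $\text{int } \gamma_n$ meets $H(e)$ only in the $J$-component not touching $L(e)$. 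Because $\beta$ is closed dual it cannot cross any open edge of $\gamma_n$ (standard planar duality), and because it terminates in $B(n)^* \subset \text{int } \gamma_n$, the arc $\beta \cap (H(e) \setminus B(e))$ lies entirely in that interior component. Hence its endpoint on $\partial H(e) \setminus \partial B(3n)$ sits strictly between the two open endpoints, on the side opposite $L(e)$, which is exactly \eqref{eqn: order} with $\beta$ separated from $\partial B(3n)$. The argument for $F_2(e)$ is word-for-word the same, since $K(e) \subset [n,3n]^2$ is also disjoint from $B(n)$.

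The main technical nuisance is the construction of the Jordan arc $J$: the open arms $\alpha_1, \alpha_2$ can in principle re-enter $H(e)$ after their first exits, so $J$ must be built only from the first-exit sub-arcs in order to be a simple arc with the required endpoint structure. Once $J$ is constructed, the rest is classical plane topology. The only geometric input used is $H(e), K(e) \subset [n,3n]^2$ and disjointness from $B(n)$, which is baked into the definitions of $B(e), H(e), K(e)$ adapted to $e$ lying in the top-right component of $A$ preceding the statement of the lemma.
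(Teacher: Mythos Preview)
Your proof takes essentially the same approach as the paper's: let $J$ (the paper's $\gamma_e$) be the arc of $\gamma_n$ through $e$ up to its first exits $a,b$ from $H(e)$, note that $J$ separates $H(e)$ into two pieces, and use that the closed dual arm from $e^*$ stays in $\mathrm{int}\,\gamma_n$ (hence never crosses $J$) to force its exit point onto the arc of $\partial H(e)$ between $a$ and $b$ away from $\partial B(3n)$; the paper then traces the same arms back to $\partial B(e)$ exactly as you do. One small imprecision worth tightening: your sentence ``$\mathrm{int}\,\gamma_n$ meets $H(e)$ only in the $J$-component not touching $L(e)$'' is not literally true, since $\gamma_n$ may re-enter $H(e)$ and create interior pockets on either side of $J$ --- but what your argument actually uses (that $\beta$ cannot cross $J$ and cannot exit $H(e)$ through $L(e)\subset\partial B(3n)$ while staying in $\mathrm{int}\,\gamma_n\subset B(3n)$) is correct, and the paper's own proof is equally informal at exactly this step.
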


In this lemma, we define $F_1(e)$ to be the sure event (that is, the entire sample space) if $8 d(e) > d'(e)$ and we define $F_2(e)$ to be the sure event if $4d'(e) > n$. This is to guarantee that later in the proof, there is enough room between the boxes $B(e)$ and $H(e)$ (or between $H(e)$ and $K(e)$) to do arm separation arguments. 

\begin{proof}
If $e\in _n$ then $e$ belongs to an open circuit surrounding $B(n)$ in $A(n)$. Moreover, $e^*$ is connected to $\partial B(n)$ by a closed path contained in the interior $\mathrm{int}\gamma_n$ of the circuit. 

Let $r_1$ be the portion of the open circuit $\hat{\gamma}_n$ obtained by traversing the circuit in one direction from $e$, until first time it exits $H(e)$. Call $a(e)\in \partial H(e)$ the point of exit. $r_2$ is the portion of $\gamma_n$ obtained by traversing the circuit in the other direction, until it first exits $H(e)$, at a point $b(e)$. The curve $\gamma_e$ contained in $H(e)$ joining $a(e)$ to $b(e)$ separates $H(e)$ into two regions, each bounded by the curve of $\gamma_e$ and a portion of $\partial H(e)\setminus \partial B(3n)$. Exactly one of these regions, $R(e)$, say, lies inside the circuit $\gamma_n$, and hence contains the portion of the closed dual path from $e$ to $\partial B(n)$ until it first exists $H(e)$. Following this path from $e$ until this exit point, we obtain a closed dual path whose endpoint $c(e)$ must lie on $\partial H(e)$, between $a(e)$ and $b(e)$. Traversing $r_1$ backwards from $a(e)$ and $r_2$ and $b(e)$ toward $e$ until the first time they enter $B(e)$, we obtain two points $a'(e)$ and $b'(e)$ on $\partial B(e)\setminus \partial B(3n)$. Following the closed path backwards similarly, we find a point $c'(e)$ lying between $a'(e)$ and $b'(e)$ on $\partial B(e)\setminus\partial B(3n)$.

The proof for the paths in $K(e)\setminus H(e)$ is similar.
\end{proof}

Returning to the probability in \eqref{eqn: conditionswitch}, write:
\begin{align*}
&\mathbf{P}(\cap_{k=K}^{\lfloor C_4/8\log n \rfloor} E_{2k}(e)^c \mid e\in \hat \gamma_n)\\
=&~\frac{1}{\mathbf{P}(e\in \hat \gamma_n)}\mathbf{P}(\cap_{k=K}^{\lfloor C_4/8\log n \rfloor} E_{2k}(e)^c, e\in \hat \gamma_n)\\
\le&~\frac{1}{\mathbf{P}(e\in \hat \gamma_n)}\mathbf{P}\left(\cap_{k=K}^{\lfloor C_4/8\log n \rfloor} E_{2k}(e)^c,F_1(e),F_2(e),A_3(e,d(e))\right)
\end{align*}

Note that the event
\begin{equation}
\cap_{k=K}^{\lfloor C_4/8 \log n \rfloor} E_{2k}(e)\label{eqn: capprob}
\end{equation}
depends only on edges inside $B(e, n^{C_4/4}) \subset B(e,n^{C_4/2})$, so  $F_1(e)$ and $F_2(e)$ are independent of \eqref{eqn: capprob}, so we have
\begin{align*}
&\mathbf{P}(\cap_{k=K}^{\lfloor C_4/8\log n \rfloor} E_{2k}(e)^c,F_1(e),F_2(e),A_3(e,d(e)))\\ 
\le& \ \mathbf{P}(F_1(e))\mathbf{P}(F_2(e))\mathbf{P}(\cap_{k=K}^{\lfloor C_4/8\log n \rfloor} E_{2k}(e)^c,A_3(e,d(e))).
\end{align*}

Proposition \ref{prop: switch} now follows from the next estimate:
\begin{lma}
There is a constant $C$ independent of $n$ and $e$ such that
\begin{equation}\label{eqn: gluing}
\mathbf{P}(e\in \hat \gamma_n) \ge \frac{1}{C}\mathbf{P}(F_1(e))\mathbf{P}(F_2(e))\mathbf{P}(A_3(e,d(e))),
\end{equation}
\end{lma}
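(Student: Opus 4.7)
The plan is to construct the event $\{e \in \hat\gamma_n\}$ on the right-hand side by combining $A_3(e,d(e))$, $F_1(e)$, and $F_2(e)$ with a sequence of auxiliary crossing events each of uniformly positive probability, using the generalized FKG inequality (Lemma~\ref{lma: fkg}) and arm separation to preserve near-independence of these events. The argument proceeds by building, scale by scale, the three-arm configuration witnessing $e \in \hat\gamma_n$ according to the Morrow--Zhang characterization stated in Section~\ref{sec: defs}: two disjoint open arms from $e$ lying on an open circuit around $B(n)$ in $A(n)$, and a closed dual arm from $e^*$ reaching $B(n)^*$ in the interior of that circuit.

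First I would apply the arm-separation technique of \cite[Theorem 11]{nolin} to $A_3(e,d(e))$, forcing the three arms from $e$ to land in prescribed, well-separated arcs $I_1,I_2,I_3 \subset \partial B(e)\setminus \partial B(3n)$, with the two open arms landing in the outer arcs $I_1,I_3$ and the closed dual arm landing in the middle arc $I_2$ facing $B(n)$; this costs only a multiplicative constant independent of $n$ and $e$. The same technique applied to $F_1(e)$ and $F_2(e)$ forces the corresponding arms to land in prescribed arcs of $\partial H(e)$ and $\partial K(e)$, with the ordering open--closed--open inherited from Lemma~\ref{pathorder}. I would then use Lemma~\ref{lma: fkg} together with elementary RSW crossings in thin annular neighborhoods of $\partial B(e)$ and $\partial H(e)$ (in the gaps made possible by the extra room guaranteed by the hypotheses $8d(e) < d'(e)$ and $4d'(e) < n$ in Lemma~\ref{pathorder}) to glue the matching arc endpoints: open arcs are joined by open connectors, the closed dual arc by a closed dual connector, each with conditional probability bounded below by a universal constant.

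Finally, outside $K(e) = [n,3n]^2$, the remaining part of $A(n)$ is topologically an annulus with a corner rectangle removed and has bounded aspect ratio. I would use a standard RSW construction there to join the two open exit arms at $\partial K(e)\setminus \partial B(3n)$ into an open circuit around $B(n)$ in $A(n)$ and to extend the closed exit arm to a closed dual path reaching $\partial B(n)^*$ in the interior of that circuit. These auxiliary events have probability bounded below by a constant depending only on the fixed aspect ratio. Multiplying the lower bounds at each stage, and using that the input events depend on essentially disjoint edge sets (namely the interiors of $B(e)$, $H(e)\setminus B(e)$, $K(e)\setminus H(e)$, and $A(n)\setminus K(e)$), yields \eqref{eqn: gluing}.

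The main obstacle is topological rather than probabilistic: we must ensure that after gluing the arcs at $\partial B(e)$, $\partial H(e)$ and $\partial K(e)$, the closed dual arm actually ends up in the interior of the constructed open circuit around $B(n)$, so that the resulting configuration really witnesses $e \in \hat\gamma_n$ rather than, say, $e$ lying in some \emph{other} open circuit or with its closed arm escaping to $\partial B(3n)^*$. The ordering clause in $F_1(e)$ and $F_2(e)$ (closed arm separated from $\partial B(3n)$ by the two open arms at each scale) is precisely what rules out the wrong topology: it forces the closed arm to propagate \emph{inward} at every step. A secondary, routine complication is that the full proof of \eqref{eqn: conditionswitch} requires repeating this construction for each of the regions $A,B,C,D,E$ with region-dependent choices of $H(e)$ and $K(e)$; the case $e\in A$ is representative, and the other cases are handled by symmetric modifications since $d(e) \geq n^{C_4/2}$ always leaves enough macroscopic room in $A(n)$ to carry out the final RSW step.
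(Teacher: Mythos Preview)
Your proposal is correct and follows essentially the same approach as the paper: arm separation on $A_3(e,d(e))$, $F_1(e)$, $F_2(e)$ to prescribe landing zones, RSW-based auxiliary crossing events outside $K(e)$ (the paper calls these $\mathcal{C}_1$ and $\mathcal{C}_2$) to close up the open circuit and route the closed dual arm to $B(n)^*$, and generalized FKG to glue. Your explicit remark that the open--closed--open ordering in $F_1(e)$ and $F_2(e)$ is what forces the closed arm to stay on the $B(n)$ side is a point the paper leaves implicit in its choice of landing zones, and is worth keeping.
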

\begin{proof}
We first introduce two events that will serve to complete a circuit around $A(n)$, once connected to the open arms coming out of $e$.
Let $\mathcal{C}_1$ be the event that there are open crossings along the long sides of the rectangles
\begin{align*} 
[-3n,n]\times [5n/2,3n],~& [-3n,-5n/2]\times [-3n,3n],\\
 [-3n,3n]\times [-3n,-5n/2],~&  [5n/2,3n]\times [-3n, n].
\end{align*}
Let $\mathcal{C}_2$ be the event that there are top-down and left-right closed crossings of the rectangle:
\[ [3n/4,n]\times [n,3n/2].\]

By the Russo-Seymour-Welsh theorem, the Harris inequality and independence, there is a positive $C_{6}$ independent of $n$ such that
\begin{equation}\label{eqn: Clwrbd}
\mathbf{P}(\mathcal{C}_1), \mathbf{P}(\mathcal{C}_2)\ge C_{6}.
\end{equation}

To connect the partial circuits $\mathcal{C}_1$ and $\mathcal{C}_2$ into a circuit containing the edge $e$, we use a standard arms separation argument (see for example \cite[Lemma 4]{kestenscaling}, \cite[Theorem 11]{nolin}), which allows us to specify landing areas on $\partial B_e$, $\partial H_e$ and $\partial K_e$ for the arms in events $F_1(e)$ and $F_2(e)$, while not modifying the probability of these events by more than a constant factor. The conclusion \eqref{eqn: gluing} is then obtained using the generalized FKG inequality. 

To define the modified arm events, we need to specify regions (``landing zones'') that will contain the endpoints. For this, we divide the left side of $\partial B_e$ into four vertical segments, which we label from top to bottom: $I_1, I_2, I_3, I_4$ of equal length $d(e)/2$. The bottom side of $\partial B_e$ is also divided into three horizontal segments of equal size, which we label according to their position from left to right: $I_1'$,$I_2'$, $I_3'$ and $I_4'$. We proceed similarly with the left side of $\partial H_e$, which we also divide into four parts $J_1$, $J_2$, $J_3$, $J_4$ of equal size, labeled from top to bottom. 
The bottom side of $\partial H_e$ is also divided into four parts of equal size: $J_1'$, $J_2'$, $J_3'$ and $J_4'$, labeled from left to right. $I_4$ and $I_1'$ intersect at the lower left corner of $B_e$; $J_4$ and $J_1'$ intersect at the lower left corner of $H_e$. Note also that 
\begin{align*}
J_1&\subset [-3n,3n]\times [5n/2,3n]\\
J_4'&\subset [5n/2,3n]\times[-3n,3n].
\end{align*}

$\tilde{A}_3(e,d(e))$ is the event that $A_3(e,d(e))$ occurs, one of the open arms from $e$ has its other endpoint in $I_2$, and the other arm has its endpoint in $I_2'$. The closed arm has its endpoint in $I_4^*$. $\tilde{F}_1(e)$ is the event that $F_1(e)$ occurs, one of the open arms having its endpoints in $I_2$ and $J_1$, respectively, and the other open arm having endpoints in $I_2'$ and $J_4'$. Moreover, we require the closed arm to have its endpoints in $I_4^*$ and $J_4^*$. $\tilde{F}_2(e)$ is the event that $F_2(e)$ occurs, one open arm has endpoints in $J_1$ and $\{n\}\times [5n/2,3n]$, and the other in $J_4'$ and $[5n/2,3n]\times \{n\}$. Finally the closed dual arm is required to have one endpoint in $J_4^*$, and the other in $\{n\}\times [n,3n/2]$.

\begin{figure}
\centering
\includegraphics[scale = 0.5]{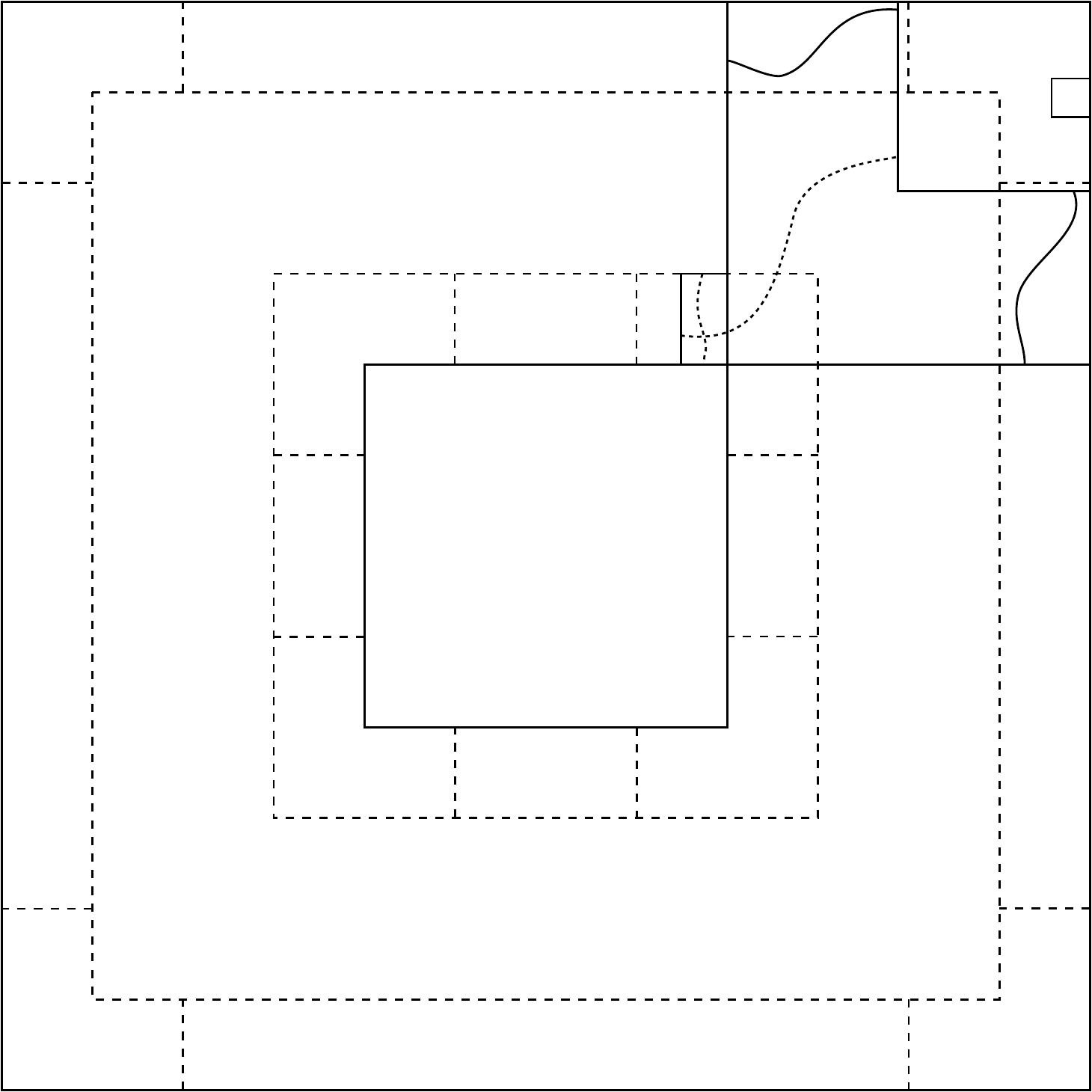}
\caption{A sketch of the construction inside $K_e$ in case $e$ lies in the top right component of $A$}
\label{fig: pic3}
\end{figure}

On $\tilde{A}_3(e,d(e))\cap \tilde{F}_1(e)\cap\tilde{F}_2(e)$, standard gluing techniques allow us connect each of the open arms the definition of $\tilde{F}_1(e)$ to one end of the open arc in the event $\mathcal{C}_1$, and the closed arm to the vertical crossing of $[3n/4,n]\times [n,3n/2]$ appearing in the definition of the event $\mathcal{C}_2$, and to connect the arms in each of the three events to the arms with endpoints in the same region. Combined with the generalized FKG inequality, this gives:
\begin{equation}\mathbf{P}(e\in _n) \ge C_{6}^2C\mathbf{P}(\tilde{F}_1(e))\mathbf{P}(\tilde{F}_2(e))\mathbf{P}(\tilde{A}_3(e,d(e))),\label{eqn: FKGapp}
\end{equation}
where $C$ is independent of $n$ and $C_{6}$ appears in \eqref{eqn: Clwrbd}.

By arms separation, \cite[Theorem 11]{nolin}, we have
\begin{equation}\label{eqn: separatedA}
\mathbf{P}(\tilde{A}_3(e,d(e)))\ge C\mathbf{P}(A_3(e,d(e))).
\end{equation}

An argument similar to the proof of \cite[Theorem 11]{nolin} (see also the proof of \cite[Lemma 4]{kestenscaling})
gives the existence of a constant $C$ independent of $n$, such that:
\begin{align}
\mathbf{P}(\tilde{F}_1(e))&\ge C\mathbf{P}(F_1(e)) \label{eqn: separatedF1}\\
\mathbf{P}(\tilde{F}_2(e))&\ge C\mathbf{P}(F_2(e)). \label{eqn: separatedF2}
\end{align}
It is important here that the arms in the definition of $F_1(e)$ and $F_2(e)$ appear in a definite order, as guaranteed by Lemma \ref{pathorder}.

Combining \eqref{eqn: separatedA}, \eqref{eqn: separatedF1}, \eqref{eqn: separatedF2} and \eqref{eqn: FKGapp}, we obtain \eqref{eqn: gluing} in the case where $e$ lies in the upper right part of the region $A$. Similar gluing constructions also apply in the other cases.
\end{proof}

\subsection{Arms separation conditional on $A_3$}
By Proposition \ref{prop: switch}, we have, for any $e\in \hat{A}(n)$:
\begin{equation}\label{eqn: applyhere}
\mathbf{P}(S(e) =\emptyset \mid e\in \hat \gamma_n) \le C\mathbf{P}(\cap_{k=K}^{\lfloor C_4/8 \log n \rfloor} E_{2k}(e)^c \mid A_3(e,d(e))),
\end{equation}
where $d(e)$ is defined in \eqref{eq: d_e} and $K=K(\eta)$ is defined below \eqref{eq: n_k_choice}. The events $E_{2k}$ depend on disjoint sets of edges, and each occurs with probability bounded below independently of $k$ (see \eqref{eqn: ekbound}), so we expect an estimate of the form \eqref{eqn: decay}. 

However, we must ensure that the conditioning on the three arm event $A_3(e,d(e))$ does not affect the probability of occurrence of the $E_{2k}$'s too drastically. To state our result, let $d=3^m$ be a (large) integer. We will later take $m=\lfloor \log_3 d(e)\rfloor$.
Our assumption will be
\begin{equation}\label{eq: assumption}
\mathbf{P}(E_{2k} \mid A_3(d)) \geq C_{7}>0 \text{ for } K(\eta) \leq 2k \leq \lfloor (C_4/8)\log n \rfloor.
\end{equation}
This is the lower bound \eqref{eqn: condek}. From the definition of $E_{2k}$ (Definition \ref{def: ekdef}), we also have
\begin{equation}\label{eq: E_k_dependence}
E_{2k} \text{ depends only on edges in } B(3^{2k+1})\setminus B(3^{2k-1}).
\end{equation}



Set $m_n = \lfloor (C_4/8)\log n \rfloor$. Our goal will be to prove that
\begin{equation}\label{eq: new_main_result_cheese_head}
\mathbf{P}(\cap_{k=K}^{m_n} E_{2k}^c \mid A_3(N)) \to 0 \text{ as } n \to \infty
\end{equation}
uniformly in $N \geq n^{C_4/2}$. Given this result, we find by translation invariance
\begin{prop}\label{eqn: a3separation}
As  $n \to \infty$,
\[\mathbf{P}(S(e) = \emptyset \mid e \in \hat \gamma_n) \to 0\]
uniformly for $e \in \hat A(n)$.
\end{prop}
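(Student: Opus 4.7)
The plan is to combine Proposition~\ref{prop: switch} with a telescoping estimate across the logarithmically many disjoint scales on which the events $E_{2k}$ live. By translation invariance it suffices to take $e = 0$, and by Proposition~\ref{prop: switch} the proposition reduces to showing
\[
\mathbf{P}\Big(\bigcap_{k=K}^{m_n} E_{2k}^c \,\Big|\, A_3(N)\Big) \longrightarrow 0 \quad \text{as } n \to \infty,
\]
uniformly in $N \geq n^{C_4/2}$, where $m_n = \lfloor (C_4/8)\log n\rfloor$. This is precisely statement~\eqref{eq: new_main_result_cheese_head}.

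By \eqref{eq: E_k_dependence} the event $E_{2k}$ depends only on edges in the annulus $A_{2k} = B(3^{2k+1})\setminus B(3^{2k-1})$, and these annuli are pairwise disjoint. Let $\mathcal{F}_k$ be the $\sigma$-algebra generated by edges outside $B(3^{2k+1})$. Then $E_{2j} \in \mathcal{F}_k$ for all $j > k$, and (assuming $K$ is large enough that $3^{2K+1} \leq N$, which holds automatically for $N \geq n^{C_4/2}$) the event $A_3(N) \in \mathcal{F}_K$, so we may telescope
\[
\mathbf{P}\Big(\bigcap_{k=K}^{m_n} E_{2k}^c \,\Big|\, A_3(N)\Big) \;=\; \prod_{k=K}^{m_n} \mathbf{P}\Big(E_{2k}^c \,\Big|\, A_3(N) \cap \bigcap_{j>k} E_{2j}^c\Big).
\]
If we can produce a universal $c > 0$ such that each factor is at most $1-c$, we obtain the exponential decay $(1-c)^{m_n - K} \to 0$, which gives the proposition.

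The single-scale lower bound we need is
\[
\mathbf{P}\big(E_{2k} \,\big|\, A_3(N) \cap F\big) \;\geq\; c, \qquad F \in \mathcal{F}_k,\ \mathbf{P}(F \cap A_3(N)) > 0.
\]
The unconditional estimate \eqref{eq: assumption} gives $\mathbf{P}(E_{2k} \mid A_3(3^{2k+1})) \geq C_7$; the task is to upgrade this to allow conditioning on the full exterior configuration. This is exactly what the arm-separation technology (Lemma~\ref{lem: DS}, following \cite{DS}) provides: decomposing over the configuration in a thin annulus just outside $B(3^{2k+1})$, one shows that, uniformly in $F$, with probability bounded below the two open arms and closed dual arm produced by $A_3(N)$ land on $\partial B(3^{2k+1})$ in a ``well-separated'' configuration. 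On this good event the edges inside $B(3^{2k+1})$ are independent of $\mathcal{F}_k$, and the interior arms that $E_{2k}$ requires can be glued to the landing points using RSW and the generalized FKG inequality (Lemma~\ref{lma: fkg}), at the cost of a constant factor; the remaining interior construction is precisely the one that produced \eqref{eq: assumption}.

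The main technical obstacle is verifying that arm separation applies uniformly in the arbitrary conditioning event $F$ describing the exterior. The key observation is that $E_{2k}$ is built entirely from edges strictly inside $B(3^{2k+1})$ (crossings, five-arm points, closed circuits with two defects, three-arm points, and the length bound on $\tilde{\alpha}_2$), so conditional on a well-separated landing configuration at $\partial B(3^{2k+1})$ the construction of $E_{2k}$ is internal and decouples from $\mathcal{F}_k$. Once the single-scale bound is established, the telescoping identity yields
\[
\mathbf{P}\Big(\bigcap_{k=K}^{m_n} E_{2k}^c \,\Big|\, A_3(N)\Big) \;\leq\; (1-c)^{m_n - K} \;\longrightarrow\; 0,
\]
uniformly in $N \geq n^{C_4/2}$. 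Combined with Proposition~\ref{prop: switch} and the fact that $d(e) \geq n^{C_4/2}$ for $e \in \hat A(n)$, this completes the proof of Proposition~\ref{eqn: a3separation}.
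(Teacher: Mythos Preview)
Your reduction via Proposition~\ref{prop: switch} and the telescoping identity are correct, but the argument breaks at the claim ``the event $A_3(N) \in \mathcal{F}_K$.'' The three arms in $A_3(N)$ emanate from the origin and must traverse every annulus $B(3^{2k+1})\setminus B(3^{2k-1})$; hence $A_3(N)$ depends on edges inside $B(3^{2K+1})$ and is \emph{not} $\mathcal{F}_K$-measurable. This means the conditioning event $A_3(N)\cap\bigcap_{j>k}E_{2j}^c$ is not purely exterior to the annulus where $E_{2k}$ lives, and the single-scale bound $\mathbf{P}(E_{2k}\mid A_3(N)\cap F)\ge c$ for arbitrary $F\in\mathcal{F}_k$ does not follow from independence.

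You are right that arm separation is the tool, but your sketch does not supply a mechanism for uniformity in the exterior conditioning $F$: conditioning on $F$ can bias where the arms of $A_3(N)$ land on $\partial B(3^{2k+1})$, and this bias propagates inward. The paper handles this by first forcing, with conditional probability $1-\epsilon$, closed dual circuits with two defects in a sparse sequence of annuli (Claim~\ref{claim: clam_head}). These circuits act as regeneration screens: decomposing over the innermost such circuit in two successive annuli and applying Lemma~\ref{lem: DS} yields the decorrelation inequality \eqref{eq: step_one}, namely $\mathbf{P}(F\mid A_3(N),C_N,E)\ge c_1\,\mathbf{P}(F\mid A_3(N),C_N)$ for $E$ inner and $F$ outer. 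Only then can one telescope over a subcollection of the $E_{2k}$'s. Your proposal skips the construction of these screening circuits, and without them (or an equivalent device) the uniform single-scale estimate is not established.
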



We begin with with the following intermediate statement.
\begin{claim}\label{claim: clam_head}
For a sequence of integers
\[3^{2K+1} < i_1<i_2<\ldots<i_k<\ldots\]
let $B_k$ be the event that there exists a closed dual circuit with two defects (that is, two edges that are open) around 0 in $Ann(i_k,i_{k+1})$, and
\[k_N = \max\{k: i_{k+1}<N\}.\]
Furthermore, let $\hat B_k$ be the event that there exists an open circuit with one defect around 0 in $Ann(i_k,i_{k+1})$. Given $\epsilon>0$, there is a choice of $i_1, i_2, \dots$ such that $i_{k+1} > 3^6 i_k$ for all $k$ and
\begin{equation}
\mathbf{P}(\hat B_1^c \cup (\cup_{k=2}^{k_N}B_k^c) \mid A_3(N))<\epsilon
\end{equation}
for all $N$.
\begin{proof}
By quasimultiplicativity, we can choose $C_{14}$ such that for all $m_1 < m_2 < N$,
\[\mathbf{P}(A_3(m_1))\mathbf{P}(A_3(m_1,m_2))\mathbf{P}(A_3(m_2,N))\le C_{14}\mathbf{P}(A_3(N)).\]
For any sequence $i_1<i_2<\ldots$, let 
\[\alpha_k=\mathbf{P}(\text{there is an open crossing or a closed dual crossing of } Ann(i_k,i_{k+1})).\]
Choose the sequence $(i_k)_{k\ge 1}$ such that
\[\sum_{k=1}^\infty \alpha_k \le \epsilon/C_{14}.\]
Then, estimate
\begin{align}
\mathbf{P}(A_3(N), \hat B_1^c \cup (\cup_{k=2}^{k_N}B_k^c)) &\le \mathbf{P}(A_3(N),\hat B_1^c) + \sum_{k=2}^{k_N} \mathbf{P}(A_3(N), B_k^c) \nonumber \\ 
&\leq \mathbf{P}(A_3(i_1)) \mathbf{P}(A_3(i_1,i_2),\hat B_1^c) \mathbf{P}(A_3(i_2,N)) \nonumber \\
&+ \sum_{k=2}^{k_N} \mathbf{P}(A_3(i_k))\mathbf{P}(A_3(i_k,i_{k+1}), B_k^c)\mathbf{P}(A_3(i_{k+1},N)). \label{eqn: popeye}
\end{align}

If $A_3(i_k,i_{k+1})$ occurs but $B_k$ does not occur, then there must be an open crossing of $Ann(i_k,i_{k+1})$ that is disjoint from the three crossings from $A_3(i_k,i_{k+1})$. A similar statement holds for $\hat B_1$. Therefore, by Reimer's inequality \cite{reimer}, \eqref{eqn: popeye} is bounded by

\begin{align*}
&\left(\sum_{k=1}^{k_N} \alpha_k\right) \mathbf{P}(A_3(i_k))\mathbf{P}(A_3(i_k,i_{k+1}))\mathbf{P}(A_3(i_{k+1},N))\\
\le& ~C_{14}\mathbf{P}(A_3(N))\sum_{k=1}^{k_N} \alpha_k \le \epsilon \cdot \mathbf{P}(A_3(N)).
\end{align*}

\end{proof}
\end{claim}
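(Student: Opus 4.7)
Both the ``first annulus'' event $\hat B_1^c$ and the ``later annulus'' events $B_k^c$ are of the same general type: each says that the three-arm event $A_3$ across some annulus is \emph{not} accompanied by the appropriate complementary macroscopic circuit (open with one defect, or closed dual with two defects, around the origin). My approach is to show that on the conjunction of $A_3$ across the annulus and $B_k^c$ (or $\hat B_1^c$), there must be an extra crossing of the annulus that is edge-disjoint from the three arms, and then to use Reimer's inequality to peel it off as a multiplicative ``penalty'' $\alpha_k$ that can be summed.

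The first step is to decompose $\mathbf{P}(A_3(N)\cap B_k^c)$ using the three disjoint regions $B(i_k)$, $Ann(i_k,i_{k+1})$, and $B(3N)\setminus B(i_{k+1})$. By independence and quasimultiplicativity (Nolin \cite{nolin}, Prop.~12.2),
\[
\mathbf{P}(A_3(N)\cap B_k^c)\le \mathbf{P}(A_3(i_k))\,\mathbf{P}(A_3(i_k,i_{k+1})\cap B_k^c)\,\mathbf{P}(A_3(i_{k+1},N)),
\]
and there is a constant $C_{14}$ with $\mathbf{P}(A_3(i_k))\mathbf{P}(A_3(i_k,i_{k+1}))\mathbf{P}(A_3(i_{k+1},N))\le C_{14}\mathbf{P}(A_3(N))$ (and the analogous bound for $\hat B_1$).

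The heart of the argument is the disjoint-occurrence step. Assume $A_3(i_k,i_{k+1})$ holds, with the two open arms $\omega_1,\omega_2$ and the closed dual arm $\omega_3^*$ crossing the annulus. The two open arms $\omega_1,\omega_2$ are forced to cross any closed dual circuit surrounding the origin in the annulus, creating exactly two defects. Hence the only way $B_k$ can fail is if the closed dual path cannot be extended into a circuit around $0$; by planar duality, this requires an additional \emph{open} crossing of $Ann(i_k,i_{k+1})$ that is edge-disjoint from $\omega_1,\omega_2,\omega_3^*$. (The case of $\hat B_1$ is symmetric, with an extra closed dual crossing disjoint from the arms.) By Reimer's inequality,
\[
\mathbf{P}(A_3(i_k,i_{k+1})\cap B_k^c)\le \alpha_k\,\mathbf{P}(A_3(i_k,i_{k+1})),
\]
where $\alpha_k$ is the probability of an open (or closed dual) crossing of $Ann(i_k,i_{k+1})$. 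Combining with the decomposition above and summing,
\[
\mathbf{P}\bigl(\hat B_1^c\cup \textstyle\bigcup_{k=2}^{k_N}B_k^c\bigm| A_3(N)\bigr)\le C_{14}\sum_{k\ge 1}\alpha_k.
\]

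Finally, one chooses the sequence $(i_k)$. By RSW, there exist universal $C,\beta>0$ with $\alpha_k\le C(i_k/i_{k+1})^\beta$, so picking for example $i_{k+1}=3^{r_k}i_k$ with $r_k\ge 6$ growing fast enough (say $r_k=6+k$) makes $\sum_k\alpha_k<\epsilon/C_{14}$ for any prescribed $\epsilon$, uniformly in $N$, which completes the proof. The main obstacle I anticipate is the disjoint-occurrence step: one must verify carefully that on $A_3\cap B_k^c$ the extra crossing can be chosen edge-disjoint from the three arms of $A_3$, so that Reimer's inequality applies cleanly. This is a planarity/parity argument about how open arms intersect closed dual circuits with defects, and is the only place where the precise definitions of $B_k$ and $\hat B_k$ (two versus one defect) enter in an essential way.
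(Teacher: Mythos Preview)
Your proposal is correct and follows essentially the same approach as the paper: decompose $A_3(N)$ across the three regions via independence and quasimultiplicativity, observe that on $A_3(i_k,i_{k+1})\cap B_k^c$ (resp.\ $\hat B_1^c$) there must be an extra open (resp.\ closed dual) crossing of the annulus edge-disjoint from the three arms, apply Reimer's inequality to peel off the factor $\alpha_k$, and then choose the $i_k$ growing fast enough that $\sum_k\alpha_k\le \epsilon/C_{14}$. The only cosmetic difference is that you give an explicit RSW-based construction of the sequence $(i_k)$, whereas the paper simply asserts that such a choice exists.
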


\begin{proof}[Proof of Proposition \ref{eqn: a3separation}]
Choose $i_1 < i_2 < \cdots$ from the previous claim corresponding to $\epsilon/2$. For $k_N \geq 3$, consider the event
\[
C_N = \hat B_1 \cap (\cap_{k=2}^{k_N} B_k).
\]
The first step is to show that there is a constant $c_1>0$ such that for all $N$ large, all $k$ satisfying $3 \leq k \leq k_N-3$, $E$ any event depending on the state of edges in $B(i_k)$, and $F$ any event depending on the state of edges in $B(i_{k+4})^c$, then
\begin{equation}\label{eq: step_one}
\mathbf{P}(F \mid A_3(N),C_N,E) \geq c_1 \mathbf{P}(F\mid A_3(N), C_N)\ .
\end{equation}

On the event $C_N \cap A_3(N)$, there is an innermost dual circuit with two defects in each annulus $Ann(i_k,i_{k+1})$. (This circuit is also vertex self-avoiding.) For a given dual circuit $\mathcal{C}$ in $Ann(i_k,i_{k+1})$ with edges $e_1,e_2$ on $\mathcal{C}$, we let $Circ_k(\mathcal{C})$ be the event that $\mathcal{C}$ is the innermost closed dual circuit with defects $e_i$. Generally, if $A_3(N)$ does not occur, then the event $Circ_k(\mathcal{C})$ means that $\mathcal{C}$ is closed, $e_1$ and $e_2$ are open, and there is no closed circuit with two defects around 0 in $Ann(i_k,i_{k+1})$ entirely contained in the union of $\mathcal{C}$ with its interior.

Then we can decompose
\begin{equation}\label{eq: pre_decompose}
\mathbf{P}(F,E,A_3(N),C_N) = \sum_{\mathcal{C},\mathcal{D}} \mathbf{P}(F,E,A_3(N),Circ_k(\mathcal{C}),Circ_{k+3}(\mathcal{D}),C_N)\ .
\end{equation}
To decouple, we must introduce three events that build $A_3(N)$. Every dual circuit $\mathcal{C}$ or $\mathcal{D}$ above contains two arcs between its defects. Given a deterministic ordering of all arcs and a dual circuit $\mathcal{C}$, let $\mathcal{A}_i(\mathcal{C})$ be the $i$-th arc of $\mathcal{C}$ in this ordering.  For $i=1,2$, let $X_-(\mathcal{C},i)$ be the event that 0 is connected to $e_1$ and $e_2$ by disjoint open paths in the interior of $\mathcal{C}$, and to $\mathcal{A}_i(\mathcal{C})$ by one closed dual path in the interior of $\mathcal{C}$. Let $C_N^{k-}$ be the event that $\hat B_1 \cap (\cap_{i =2}^{k-1} B_i)$ occurs. For $i,j=1,2$, let $X_0(\mathcal{C},\mathcal{D},i,j)$ be the event that $e_l$ is connected to $f_l$ (for $l=1,2$) by an open path in the region between $\mathcal{C}$ and $\mathcal{D}$ (not including $\mathcal{C}$) such that these paths are disjoint and $\mathcal{A}_i(\mathcal{C})$ is connected to $\mathcal{A}_j(\mathcal{D})$ by a closed dual path in this same region. Let $X_+(\mathcal{D},j)$ be the event that $f_1$ and $f_2$ are connected to $\partial B(N)$ by disjoint open paths in the exterior of $\mathcal{D}$ and $\mathcal{A}_j(\mathcal{D})$ is connected to $\partial B(N)$ by a closed dual path in the exterior of $\mathcal{D}$. Also let $C_N^{k+}$ be the event that $\cap_{i=k+4}^{k_N}B_i$ occurs.

Then \eqref{eq: pre_decompose} becomes by independence,
\begin{align*}
&\sum_{\mathcal{C},\mathcal{D}}\sum_{i,j} \mathbf{P}(F,E,X_-(\mathcal{C},i), X_0(\mathcal{C},\mathcal{D},i,j),X_+(\mathcal{D},j),Circ_k(\mathcal{C}),Circ_{k+3}(\mathcal{D}), C_N) \\
=~& \sum_{\mathcal{C},\mathcal{D}}\sum_{i,j} \mathbf{P}(E,X_-(\mathcal{C},i), Circ_k(\mathcal{C}),C_N^{k-}) ~\mathbf{P}(X_0(\mathcal{C},\mathcal{D},i,j), Circ_{k+3}(\mathcal{D}), B_{k+1},B_{k+2})\\
&\qquad \times  \mathbf{P}(X_+(\mathcal{D},j), F, C_N^{k+})\ .
\end{align*}

The effect of this decoupling will be to ``reset'' the system outside of the outer circuit $\mathcal{D}$, so that the event $E$ no longer significantly affects the occurrence of $F$. Intuitively speaking, $E$ could affect the system by biasing certain circuits $\mathcal{C}$ to appear in $Ann(i_k,i_{k+1})$, and these could change the conditional probability of $F$. However, a lemma from \cite[Lemma~6.1]{DS} below will show that the second circuit $\mathcal{D}$ will mostly remove this possible bias and allow the system to start fresh. We give here a modification of that lemma, which follows from essentially the same proof.
\begin{lma}\label{lem: DS}
Consider dual circuits $\mathcal{C}$ in $Ann(i_k,i_{k+1})$, $\mathcal{D}$ in $Ann(i_{k+3},i_{k+4})$, edges $e_1, e_2$ on $\mathcal{C}$ and $f_1, f_2$ on $\mathcal{D}$ respectively. For $i,j = 1,2$, let $P(\mathcal{C},\mathcal{D},i,j)$ be the probability, conditional on the event that all edges in $\mathcal{C}\setminus \{e_1, e_2\}$ are closed and $e_1, e_2$ are open, that (1) there are disjoint open paths from $e_i$ to $f_i$ in the region between $\mathcal{C}$ and $\mathcal{D}$ (not including $\mathcal{C}$), (2) there is a closed dual path from $\mathcal{A}_i(\mathcal{C})$ to $\mathcal{A}_j(\mathcal{D})$ in the region between $\mathcal{C}$ and $\mathcal{D}$ (not including $\mathcal{C}$), (3) $\mathcal{D}$ is the innermost closed dual circuit with defects $f_1, f_2$ around 0 in $Ann(i_{k+3},i_{k+4})$, and (4) $B_{i_{k+1}} \cap B_{i_{k+2}}$ occurs. We similarly define $\mathcal{C}',\mathcal{D}', i', j'$, etc. There exists a finite constant $C_8$ (it does not depend on the particular choice of circuits, defects, or $i,i',j,j'$) such that
\[
\frac{P(\mathcal{C},\mathcal{D},i,j) P(\mathcal{C}',\mathcal{D}',i',j')}{P(\mathcal{C},\mathcal{D}',i,j')P(\mathcal{C}',\mathcal{D},i',j)} < C_8\ .
\]
\end{lma}
The proof of this statement uses extensions of arm separation techniques developed by Kesten. One obtains
\begin{align*}
 &\mathbf{P}(E,F,A_3(N), C_N) \\
= &~\sum_{\mathcal{C},\mathcal{D}} \sum_{i,j} \mathbf{P}(E,X_-(\mathcal{C},i),Circ_k(\mathcal{C}), C_N^{k-}) P(\mathcal{C},\mathcal{D},i,j) \mathbf{P}(X_+(\mathcal{D},j),F, C_N^{k+})\ .
\end{align*}
Similarly,
\[
\mathbf{P}(A_3(N), C_N) = \sum_{\mathcal{C}',\mathcal{D}'} \sum_{i',j'} \mathbf{P}(X_-(\mathcal{C}',i'),Circ_k(\mathcal{C}'), C_N^{k-})P(\mathcal{C}',\mathcal{D}',i',j') \mathbf{P}(X_+(\mathcal{D}',j'), C_N^{k+})\ .
\]
Multiplying these and using Lemma~\ref{lem: DS}, one obtains
\begin{align*}
 &\mathbf{P}(E,F,A_3(N), C_N)~\mathbf{P}(A_3(N), C_N) \\
\geq & \left(\frac{1}{C_8}\right)^2 \sum_{\mathcal{C},\mathcal{C}',\mathcal{D},\mathcal{D}'} \sum_{i,j,i',j'} \bigg[ \mathbf{P}(E,X_-(\mathcal{C},i),Circ_k(\mathcal{C}), C_N^{k-}) P(\mathcal{C},\mathcal{D}',i,j') \mathbf{P}(X_+(\mathcal{D}',j'), C_N^{k+})\\
& \times \mathbf{P}(X_-(\mathcal{C}',i'),Circ_k(\mathcal{C}'), C_N^{k-})P(\mathcal{C}',\mathcal{D},i',j) \mathbf{P}(X_+(\mathcal{D},j),F, C_N^{k+}) \bigg] \\
= & \left( \frac{1}{C_8} \right)^2 \mathbf{P}(E,A_3(N), C_N) \mathbf{P}(A_3(N), F, C_N).
\end{align*}
Dividing gives
\[
\mathbf{P}(F \mid E,A_3(N), C_N) \geq (1/C_8)^2 \mathbf{P}(F \mid A_3(N), C_N)\ .
\]
This shows \eqref{eq: step_one} with $c_1 = (1/C_8)^2$.

To finish the proof of \eqref{eq: new_main_result_cheese_head}, we use estimate \eqref{eq: step_one} to show that at least one $E_{2k}$ occurs. The idea is to consider a maximal sub collection  $F_1, F_2, \ldots$ of the $E_{2k}$'s such that $F_1$ depends on the state of edges in $B(i_3)$, $F_2$ depends on the state of edges in $B(i_8) \setminus B(i_7)$, $F_3$ depends on the state of edges in $B(i_{13}) \setminus B(i_{12})$, and so on. Write $r_n$ for the largest $k$ such that $F_k$ depends on edges in $B(3^{2m_n + 1})$. Then
\begin{align*}
\mathbf{P}\left( \cap_{k=1}^{r_n} F_k^c \mid A_3(N), C_N\right) &= \prod_{k=1}^{r_n} \mathbf{P}\left( F_k^c \mid A_3(N), C_N, \cap_{l=1}^{k-1}F_l^c \right) \\
&\leq \prod_{k=1}^{r_n} \left( 1- c_1 \mathbf{P}(F_k \mid A_3(N), C_N) \right) \\
&\leq (1-a_{n,N})^{r_n},
\end{align*}
where
\[
a_{n,N} = \min_{1 \leq k \leq r_n} \mathbf{P}(F_k \mid A_3(N), C_N).
\]
However by the bound \eqref{eq: assumption}, one has
\[
\mathbf{P}(E_{2k} \mid A_3(N),C_N) \geq \mathbf{P}(E_{2k} \mid A_3(N)) - \mathbf{P}(C_N^c \mid A_3(N)) \geq C_7-\epsilon/2.
\]
So for $\epsilon < C_7$,
\[
\mathbf{P}\left( \cap_{k=1}^{r_n} F_k^c \mid A_3(N), C_N \right) \to 0 \text{ in } n \text{ uniformly in }N \geq n^{C_4/2}.
\]
Combining this with Claim~\ref{claim: clam_head}, one has
\[
\mathbf{P}(\cap_{k=K}^{m_n} E_{2k}^c \mid A_3(N)) \leq \mathbf{P}(\cap_{k=K}^{m_n} E_{2k}^c \mid A_3(N), C_N) + \mathbf{P}(C_N^c \mid A_3(N)) < \epsilon
\]
for $n$ large and uniformly in $N \geq n^{C_4/2}$.

\end{proof}

\section{The lower tail of $\tilde{L}_n$}
\label{sec: lowertail}
\begin{lma}\label{lma: lowertail}
Let $\tilde{L}_n$ be the number of edges in the lowest crossing of $[-n,n]^2$. Then
\begin{equation}\label{eq: last_sec_to_show}
\lim_{\epsilon \downarrow 0} \limsup_n \mathbf{P}(0<\tilde{L}_n < \epsilon n^2\pi_3(n)) = 0.
\end{equation}
\end{lma}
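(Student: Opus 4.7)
Since $\mathbf{P}(H_n)$ is bounded below uniformly in $n$ by RSW and $\mathbf{E}\tilde L_n\asymp n^2\pi_3(n)$ by \eqref{eqn: MZestimate}, the lemma reduces to showing
\[
\lim_{\epsilon\downarrow 0}\limsup_n \mathbf{P}\bigl(\tilde L_n\le \epsilon\,\mathbf{E}\tilde L_n\,|\,H_n\bigr)=0.
\]
As indicated in the introduction, the plan is to adapt Kesten's multi-scale pivotal lower bound from \cite[(2.46)]{kestencrit}. Kesten produces at least one pivotal edge per dyadic annulus with positive probability, for $\Omega(\log n)$ pivotals overall; here we need on the order of $n^2\pi_3(n)$ three-arm edges on $\tilde L_n$, so the per-scale yield must be boosted from a single edge to a near-mean count of three-arm points at each scale.

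Fix a large constant $\Delta$ and consider the scales $k_j=k_0+j\Delta$ for $j=0,1,\dots,J$, with $k_J=\lfloor\log_2 n\rfloor$. For each $j$, let $B_j$ be a box of side $2^{k_j}$ suitably positioned within $[-n,n]^2$, and let $Z_j$ be the number of edges $e\in B_j$ carrying a correctly oriented three-arm configuration to $\partial B_j$ (two open arms to the left and right sides of $B_j$, one closed dual arm to the bottom). A local analogue of \eqref{eqn: MZestimate} at scale $2^{k_j}$ gives $\mathbf{E}Z_j\asymp 2^{2k_j}\pi_3(2^{k_j})$ and $\mathbf{E}Z_j^2\lesssim(2^{2k_j}\pi_3(2^{k_j}))^2$, and the Paley--Zygmund inequality \eqref{eqn: PZinequality} then yields constants $c_0,c_1>0$ (independent of $j$ and $n$) with $\mathbf{P}(Z_j\ge c_0\cdot 2^{2k_j}\pi_3(2^{k_j}))\ge c_1$. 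A local gluing construction using RSW and the generalized FKG inequality (Lemma~\ref{lma: fkg}), analogous to the shielded-detour constructions of Section~\ref{sec: shielded}, extends each local three-arm configuration inside $B_j$ to global arms characterizing membership in $\tilde L_n$ on a subevent of probability bounded below.

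The decisive step is to establish approximate independence of the scale-$j$ success events under $\mathbf{P}(\cdot\mid H_n)$. Taking $\Delta$ large and using the arm-separation machinery of \cite[Section~6]{DS} (already invoked in Lemma~\ref{lem: DS}), there is $c_2>0$ (uniform in $j,n$) such that these events are conditionally independent up to bounded multiplicative constants. Let $j^*$ denote the largest $j\le J$ for which the success event holds. Approximate conditional independence yields
\[
\mathbf{P}(j^*\le J-k\,|\,H_n)\le (1-c_2)^k.
\]
On $\{j^*\ge J-k\}$, the contribution from scale $j^*$ gives $\tilde L_n\ge c_0\cdot 2^{2(J-k)}\pi_3(2^{J-k})$, and by Lemma~\ref{lem: AB} together with quasimultiplicativity \cite[Proposition~12.2]{nolin}, this is $\gtrsim 2^{-k\alpha}n^2\pi_3(n)$ for some $\alpha>0$ coming from the three-arm exponent bound. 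Setting $k=\lceil\alpha^{-1}\log_2(1/\epsilon)\rceil+O(1)$ forces $\tilde L_n\ge\epsilon n^2\pi_3(n)$ on $\{j^*\ge J-k\}$, while the complement has probability at most $(1-c_2)^k\le C\epsilon^{c_3}$ with $c_3>0$, which tends to zero as $\epsilon\downarrow 0$.

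The main obstacle is the approximate-independence step for the scale-$j$ success events under $\mathbf{P}(\cdot\mid H_n)$: the conditioning couples events at different scales through the global three-arm structure required for a crossing, and decoupling them requires the full arm-separation toolkit of \cite[Section~6]{DS}, in a form somewhat more involved than the application in the proof of Proposition~\ref{eqn: a3separation}. A secondary technical point is the gluing that extends the local three-arm configurations inside each $B_j$ into global arms of $\tilde L_n$ with the correct orientation; this is carried out by RSW and generalized FKG constructions in the style of Section~\ref{sec: shielded}, performed inside the restricted geometry of each $B_j$.
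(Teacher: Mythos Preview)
Your high-level plan---multi-scale second-moment lower bound on three-arm points, plus approximate independence across scales---matches the paper's strategy, but the proposal as written has a real gap at the anchoring step.

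The difficulty you have not addressed is \emph{where} the boxes $B_j$ live and \emph{why} local three-arm edges in them actually land on $\tilde L_n$. An edge is on the lowest crossing if and only if it carries two open arms to the left and right sides of $[-n,n]^2$ and one closed dual arm to the bottom. Building local three-arm points in a ``suitably positioned'' box and then ``gluing'' by RSW/generalized FKG does not produce this: the gluing must attach to the \emph{actual} lowest crossing and to an \emph{actual} closed dual connection to the bottom, and there is no monotone structure here that lets generalized FKG do this blindly under the conditioning on $H_n$. In particular, there is no fixed center around which the lowest crossing is guaranteed to pass, so concentric annuli at a deterministic point do not see it. The DS arm-separation you invoke is tailored to conditioning on $A_3$ in annuli, not to conditioning on $H_n$, and does not by itself give you the decoupling you claim for your scale-$j$ success events.

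The paper fills this gap by first decomposing on $D_k=\{$exactly $k$ disjoint crossings$\}$, then conditioning on the $k$ top-most crossings $T_1,\dots,T_k$ and on the left-most closed dual path $P(T_k)$ from $T_k$ to the bottom. This creates an explicit region (below $T_k$, right of $P(T_k)$) in which the configuration is \emph{genuinely independent} of the conditioning, and the annuli for the multi-scale argument are centered at the random meeting point $e_k$ of $T_k$ and $P(T_k)$. Three-arm edges in this free region need only be hooked to $T_k$ (open) and $P(T_k)$ (closed)---a purely local task---to lie on the lowest crossing. A further ingredient you are missing is a six-arm estimate showing that $T_k$ and $P(T_k)$ typically stay at least $\epsilon^\delta n$ apart away from $e_k$; without this there is no room in the free region for a box large enough to host $\asymp \epsilon^{2\delta} n^2\pi_3(n)$ three-arm points, and finding such a box inside the random domain bounded by $T_k$ and $P(T_k)$ takes additional geometric work. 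Your proposal would need all of these pieces to go through.
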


A bound analogous to \eqref{eq: last_sec_to_show} for another set, the pivotal edges in $[-n,n]^2$, was stated in \cite{GPS} (see Remark 1.7 there). There it appears as an application of a more general method developed to study the lower tail of the Fourier spectrum of the indicator of the existence of an open crossing. Here, we will adopt a different strategy. 

The idea of the proof is similar to that of Kesten's proof that at criticality, the expected number of edges which are pivotal for a crossing event is at least order $\log n$ \cite{kestencrit}. We first restrict to the event that the maximum number of disjoint open left-right crossings of the box is exactly $k$. Next, we condition successively on the $k$ upper-most disjoint open crossings. Calling $T_k$ the $k$-th such crossing, we then condition on the leftmost top-down dual closed crossing $p$ connecting $T_k$ to the bottom of the box (there must be such a crossing, since there are no more disjoint left-right open crossings below $T_k$). Calling $e_k$ the edge at the intersection of $p$ and $T_k$, we then use independence of the edge variables for edges in the region below and to the right of $e_k$ to build many ``three-arm'' edges in this region in annuli centered at $e_k$. The crucial point is that each such edge will have two disjoint open arms to $T_k$ and one closed arm to $p$, and will therefore be an edge on the lowest crossing of the box. Since there is a lower bound for the probability of many such edges existing in each annulus, we obtain that with high probability, many such edges exist in at least one annulus, and this implies $\tilde{L}_n \geq  \epsilon n^2\pi_3(n)$ with high probability.

The main difficulty in our construction (and it is this point that makes ours more complicated than the one in Kesten's proof) is that we do not want only $\log n$ number of edges on the lowest crossing, but at least $\epsilon n^2\pi_3(n)$. This corresponds to the fact that in each annulus, Kesten needs only to produce one pivotal point, whereas we need to build many. For us to do so, the region below and to the right of $e_k$ must have many open spaces. Specifically, we must first show that with probability of order $1-o_\epsilon(1)$, in each large enough annulus centered at $e_k$, we can find a box of size at least $\epsilon^\delta n$, for some $c$ and $\delta>0$, which lies entirely in the region below and to the right of $e_k$. This will be done using a six-arm argument: if $T_k$ and $p$ come too close to each other, certain annuli will have six-arm events, and this is unlikely. Next, we must construct such a box and show that three-arm edges in this box have enough room to connect to $T_k$ and to $p$.

\subsection{Proof of Lemma \ref{lma: lowertail}}Let $A_n=A_n(\epsilon)$ be the event in the probability \eqref{eq: last_sec_to_show}. Let $M_K = M_K(n)$ be the event that there are at most $K$ disjoint open crossings of the box $[-n,n]^2$.
Note that by the BK-Reimer inequality and the RSW theorem, 
\[\mathbf{P}(M_K^c)\le (1-C)^K,\] 
uniformly in $n$.

We further let $D_k = M_k \setminus M_{k+1}$ be the event that the maximal number of disjoint crossings equals $k$. Then
\[M_K = \cup_{k=1}^K D_k,\]
and the union is disjoint. Hence, we are left with showing that for $\epsilon>0$ small,
\begin{equation}\label{eqn: akdk}
\mathbf{P}(A_n, D_k) \le \left( C' \log \frac{1}{\epsilon} \right)^k \epsilon^{c'} \text{ for all } k \geq 1,
\end{equation}
if $n$ is large. Here, $C',c'>0$ are independent of $\epsilon, k$ and $n$. It then follows that
\begin{align*}
\mathbf{P}(A_n) &\le \mathbf{P}(M_K^c)+\left( C'' \log \frac{1}{\epsilon} \right)^K \epsilon^{c'}\\
&\le (1-C)^K+\left( C'' \log \frac{1}{\epsilon} \right)^K \epsilon^{c'}.
\end{align*}
Letting $K=\lceil \log \log\frac{1}{\epsilon}\rceil$, we obtain \eqref{eq: last_sec_to_show}.

We condition successively on top-most paths $T_1,\ldots, T_k$. $T_1$ is defined as the horizontal open crossing of $B(n)$ such that the region above $T_1$ is minimal. $T_2$ is then defined as the highest crossing of the region below $T_1$, and $T_i$, $i=3,\ldots, k$ is defined analogously.

For any  $k$-tuple of paths $t_1, \ldots, t_k$ that is admissible in the sense that
\[\mathbf{P}(T_1=t_1,\ldots, T_k=t_k)>0,\]
the event
\[\{T_1=t_1,\ldots, T_k=t_k\}\]
is independent of the status of edges below $t_k$ (see \cite[Prop. 2.3]{kestenlowest}).
Moreover, the event 
\[D_k \cap \{T_1=t_1, \ldots, T_k = t_k\} \]
is equal to 
\[E(t_1,\ldots,t_k)=\{T_1=t_1, \ldots, T_k = t_k\}\cap R(t_k),\]
where $R(t_k)$ is the event that there is a dual closed path from $e^*$, where $e$ is some edge on the path $t_k$, to the bottom of the box $[-n,n]^2$. Note in particular that $R(t_k)$ is independent of the status of edges on and above $t_k$.

On $E(t_1,\ldots,t_k)$, there is a unique left-most closed dual path from $t_k$ to the bottom of $[-n,n]^2$, and we denote it by $P(t_k)$. It is characterized by the following three-arm condition: each dual edge on $p$ has one closed dual arm to the path $t_k$, a disjoint closed dual arm to the bottom of the box, and an open arm to the left side of the box in the region below $t_k$. Given a closed dual path $p$ in the region below $t_k$, the event $E(t_1,\ldots,t_k)\cap \{P(t_k)=p\}$ is independent of the status of edges in the region below $t_k$ and to the right of $p$. Our goal will be to use this independence to connect at least $\sqrt{\epsilon} n^2 \pi_3(n)$ points to $T_k$ by two disjoint open paths and to $P(T_k)$ by a closed dual path. On $D_k$, we can uniquely define the edge $e_k=\{ x_k,y_k \}$ where $P(T_k)$ meets $T_k$.

We can assume $\epsilon \le 1$. Let $\nu$ be chosen such that
\begin{equation}\label{eq: delta_nu}
\nu < \frac{\alpha}{2+\alpha}\delta \text{ and } 0 < \nu < \delta<1/2,
\end{equation}
where $\alpha>0$ is any number such that 
\begin{equation}\label{eq: eta_definition}
n^\alpha \pi_1(n) \to 0.
\end{equation} 
Here, $\pi_1(n)$ is the one-arm probability $\pi_1(n) =\mathbf{P}(0 \to \partial B(n))$. Let $\alpha(T_k)$ and $\beta(T_k)$ be the left and right endpoints, respectively, of $T_k$. By RSW, we have, for some $c=c(\nu)>0$, $n$ large, $\epsilon$ small, and all $k$,
\begin{align*}
\mathbf{P}(\mathrm{dist}(e_k,\alpha(T_k)) < 2\epsilon^\nu n ,~D_k)&\le \epsilon^c  \\
\mathbf{P}(\mathrm{dist}(e_k,\beta(T_k)) < 2\epsilon^\nu n,~D_k)&\le \epsilon^c.
\end{align*}
Also by RSW, we can arrange that $T_k$ remains at distance $10\epsilon^\nu n$ from the each of the 4 corners: for $n$ large, $\epsilon$ small, and all $k$,
\[\mathbf{P}(\mathrm{dist}(T_k \cup P(T_k),Corner_i)<10\epsilon^\nu n,~D_k) \le \epsilon^c,\]
for $i=1,\ldots,4$ and
\begin{align*}
Corner_1 &= (-n,-n) & Corner_2 &= (n,- n)\\
Corner_3 &= (n,n) & Corner_4 &= (-n,n).
\end{align*}

On $D_k$,  let $I_k=I_k(n)$ be the event that
\begin{align}
\mathrm{dist}(e_k,\alpha(T_k)) &\ge 2\epsilon^\nu n, \label{rightside}\\
\mathrm{dist}(e_k,\beta(T_k)) &\ge 2\epsilon^\nu n,  \label{leftside}\\
\mathrm{dist}((T_k \cup P(T_k)),Corner_i)& \ge 10\epsilon^\nu n,\quad  i=1,\ldots,4,
\end{align}
and there is pair $u,v\in \mathbf{Z}^2$ such that $u\in T_k$, $v\in P(T_k)$,
\begin{equation}\label{eqn: far}
\mathrm{dist}(u,e_k)\ge 10\epsilon^\nu n, \quad  \mathrm{dist}(v,e_k)\ge 10\epsilon^\nu n
\end{equation}
and
\begin{equation}
\label{eqn: close}
\mathrm{dist}(u,v)< \epsilon^\delta n.
\end{equation}

\begin{figure}
\centering
\includegraphics[scale = 0.75]{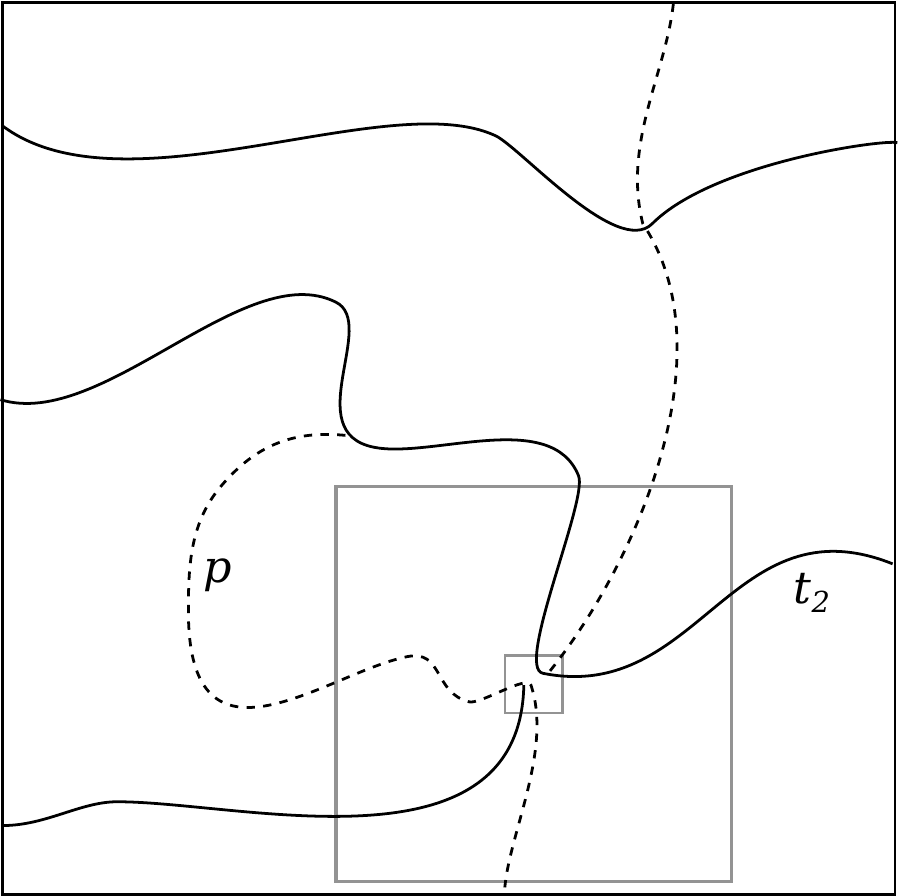}
\caption{An illustration of the estimate for $I_k$ with $k=2$. If $p$ and $t_k$ come too close together at some point, a six-arm event occurs in an annulus around that point.}
\label{bprime}
\end{figure}

\begin{lma}
Let $\delta>\nu>0$ be in \eqref{eq: delta_nu}. There exist $C,\eta'>0$ such that for small enough $\epsilon>0$,
\[
\mathbf{P}(I_k) \leq \left( C \log \frac{1}{\epsilon} \right)^k \epsilon^{\eta'} \text{ for all } k \geq 1
\]
if $n$ is large.
\end{lma}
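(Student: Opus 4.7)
The strategy is to deduce from $I_k$ a mesoscopic six-arm event and then to bound its probability via a successive conditioning on $T_1,\dots,T_k$ and $P(T_k)$. The key numerical input is the strict gap between the five-arm exponent (which equals $2$ on $\mathbb{Z}^2$) and the six-arm exponent, quantified using the one-arm estimate \eqref{eq: eta_definition}.

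Geometric reduction: on $I_k$, pick witnesses $u\in T_k$ and $v\in P(T_k)$ with $\dist(u,v)<\epsilon^\delta n$ and $\min(\dist(u,e_k),\dist(v,e_k))\ge 10\epsilon^\nu n$. In the annulus $A=B(u,\epsilon^\nu n)\setminus B(u,\epsilon^\delta n)$ I would produce six disjoint arms of alternating color landing on both boundaries: two open arms along $T_k$ from $u$ (going left and right, each of length $\ge \epsilon^\nu n$ because $u$ is far from $e_k$, from the endpoints of $T_k$, and from the corners of $[-n,n]^2$); two closed dual arms along $P(T_k)$ from $v$ (going up toward $e_k$ and down to the bottom, also of length $\ge \epsilon^\nu n$); one open arm to the left of $v$, furnished by the characterization of $P(T_k)$ as the leftmost closed dual path connecting $T_k$ to the bottom; and a sixth closed dual arm emerging just below $u$, reaching $P(T_k)$ within distance $<\epsilon^\delta n$ and then continuing along it in the direction opposite the one used at $v$. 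Pairwise disjointness and the correct cyclic alternation of colors are verified directly from the geometry.

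Six-arm bound and union over positions: since the five-arm exponent equals $2$ on $\mathbb{Z}^2$ (\cite{KSZ,nolin}), a standard splitting argument yields $\pi_6(r,R)\le C\pi_5(r,R)\pi_1(r,R)$, and invoking \eqref{eq: eta_definition} in the form $\pi_1(r,R)\le C(r/R)^{\alpha}$ gives $\pi_6(r,R)\le C(r/R)^{2+\alpha}$. With $r=\epsilon^\delta n$ and $R=\epsilon^\nu n$ this becomes $\pi_6(r,R)\le C\epsilon^{(\delta-\nu)(2+\alpha)}$. A union bound over the $O(\epsilon^{-2\delta})$ possible positions of $u$ in $[-n,n]^2$ then gives a net bound $\epsilon^{\eta'}$ with $\eta'=\alpha\delta-(2+\alpha)\nu>0$, positive precisely by the choice \eqref{eq: delta_nu}.

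Handling the conditioning and obtaining $(C\log(1/\epsilon))^k$: write $\mathbf{P}(I_k)=\sum_{t_1,\dots,t_k,p}\mathbf{P}(T_1=t_1,\dots,T_k=t_k,P(T_k)=p,I_k)$. The event $\{T_1=t_1,\dots,T_j=t_j\}$ is measurable with respect to edges on and above $t_j$ (cf.\ \cite[Prop.~2.3]{kestenlowest}), so after conditioning the edges strictly below $t_k$ and to the right of $p$ remain product Bernoulli, and the six-arm estimate of the previous paragraph applies there. A dyadic classification of each $T_j$ by its active scale, combined with gluing and arm-separation, contributes a factor $C\log(1/\epsilon)$ per conditioning step, producing the claimed $(C\log(1/\epsilon))^k$. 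The main obstacle is the first paragraph: correctly identifying the sixth closed arm and verifying the cyclic alternation of colors so that a genuine $\pi_6$ bound (strictly sub-critical thanks to \eqref{eq: eta_definition}) is available; a weaker $\pi_5$ bound would leave no power gain after the union over positions, and the whole argument would collapse.
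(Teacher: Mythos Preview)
Your identification of the sixth arm is wrong, and this is the crux of the argument. You claim ``a sixth closed dual arm emerging just below $u$, reaching $P(T_k)$ within distance $<\epsilon^\delta n$ and then continuing along it in the direction opposite the one used at $v$.'' But there is no guaranteed closed dual arm just below an arbitrary point $u\in T_k$, and even if there were, continuing it along $P(T_k)$ would overlap with one of the two closed arms you already extracted from $v$ (you used both directions of $P(T_k)$ from $v$), so the six arms would not be disjoint. The correct sixth arm, as in the paper, is the closed dual arm from $u$ upward to the top of $[-n,n]^2$: since $T_k$ is obtained by iterating the ``highest crossing'' construction $k$ times, a point on $T_k$ has a closed dual connection to the top with at most $k-1$ defects (one for each $T_1,\dots,T_{k-1}$ it must cross). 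This defect count is precisely where the factor $(C\log(1/\epsilon))^k$ comes from, via the estimate for arm events with defects \cite[Proposition~18]{nolin}. Your alternative route to this factor --- ``a dyadic classification of each $T_j$ by its active scale'' contributing $C\log(1/\epsilon)$ per conditioning step --- is not a recognizable argument and is not substantiated; the conditioning on $T_1,\dots,T_k,P(T_k)$ is used elsewhere in the paper but plays no role in this lemma, which is a direct union bound over positions of a (defected) six-arm event.

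A secondary gap: you only assume $u$ is far from the \emph{corners} of $[-n,n]^2$, not from its sides. When the midpoint of $u,v$ lies within $2\epsilon^\nu n$ of a side, the annulus $B(u,\epsilon^\nu n)\setminus B(u,\epsilon^\delta n)$ does not fit inside the box and the full six-arm event is unavailable. The paper treats this boundary case separately, combining a six-arm event on a smaller annulus with a half-plane three-arm event on a larger one, and summing over distances to the boundary; the half-plane three-arm exponent (which equals $2$) is what makes this sum converge.
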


\begin{proof}

Consider an overlapping tiling of $\mathbb{Z}^2$ by $2r\times 2r$ squares, where $r=2\epsilon^\delta n$, defined as $xr + [-r,r]^2$ for $x \in \mathbb{Z}^2$. Note that for some $x$, any choice of $u$ and $v$ in the definition of $I_k$ both lie in $B_x(r)$. Indeed, choose $x$ such that $\|u-xr\|_\infty \leq r/2$. Then $\|v-xr\|_\infty < r/2 + \epsilon^\delta n = r$. Write $B_r(u,v)$ for this box.

We consider two cases.

\paragraph{Case 1:} There is a choice of $u,v$ such that the mid-point of $u$ and $v$ lies at distance greater than $2\epsilon^\nu n$ from the boundary $\partial [-n,n]^2$. In this case we have the

\begin{claim} For $\epsilon$ small, the conditions \eqref{eqn: far} and \eqref{eqn: close} induce a six-arm event (one of the arms having at most $k-1$ defects) in an annulus centered at a point $xr$ with inner dimension $2\epsilon^\delta n$ and outer dimension $2\epsilon^\nu n$. 
\end{claim}




Since $u\in T_k$, it has two disjoint open arms to the vertical sides of $[-n,n]^2$, and a closed arm with at most $k-1$ defects to the top. These induce corresponding crossings of the annulus $B_{\epsilon^\nu n}(u,v) \setminus B_r(u,v)$ of outer radius $\epsilon^\nu n$ with the same center as $B_r(u,v)$. (Note that this annulus is contained in $[-n,n]^2$ for $\epsilon$ small due to the assumption of case 1.) Similarly, $v\in P(T_k)$, but by condition \eqref{eqn: far}, it is distance at least $10\epsilon^\nu n$ from its endpoint, so it has two closed arms and an open arm which also traverse the annulus. 

\paragraph{Case 2:} We deal with the case where the mid-point of any such $u$ and $v$ lies within distance $2\epsilon^\nu n$ of the boundary of $[-n,n]^2$, but at distance at least $5 \epsilon^\nu n$ away from its corners. As previously, there is a square $B_r(u,v)$ containing $u\in T_k$ and $v \in P(T_k)$. Let $d$ denote the distance of this square to $\partial [-n,n]^2$. Then there are 6 arms (one with at most $k-1$ defects) from $B_r(u,v)$ to $B_d(u,v)$, with the same center as $B_r(u,v)$, and at least $3$ arms (one with at most $k-1$ defects) from $B_d(u,v)$ to $B_{\epsilon^\nu n}(u,v)$. Furthermore, these $3$ arms occur in a half-space. The reader may verify this is true no matter which side the mid-point is near; for instance, in the case that it is near the left side of the square, we may choose for the $3$ arms the following paths: the portion of $T_k$ from $u$ leading to the right side of the square, a closed dual path with at most $k-1$ defects leading from $u$ to the top of the square, and the portion of $p$ leading from $v$ to the bottom of the square.

The contribution of the pairs of points corresponding to Case 1 to the probability of $I_k$ is bounded by the probability that one of the annuli (with center at least $2\epsilon^\nu n$ from the boundary) has a 6-arm point: there exists $C=C(\nu,\delta)$ such that if $\epsilon$ is small, then our upper bound is 
\[\left(\frac{n}{\epsilon^\delta n}\right)^2 \left(C \log \frac{1}{\epsilon}\right)^k\pi_6(\epsilon^\delta n,\epsilon^\nu n) \text{ for all } k \geq 1\]
if $n$ is large. Here we have used asymptotics \cite[Proposition 18]{nolin} for probabilities of arm events with defects. By the universal behavior of the 5-arm exponent \cite[Lemma 5]{KSZ} \cite[Theorem 24, 3.]{nolin} and Reimer's inequality, we have
\[\pi_6(\epsilon^\delta n,\epsilon^\nu n)\le C \left(\frac{\epsilon^\delta n}{\epsilon^\nu n}\right)^{2+\alpha}\le C \epsilon^{(2+\alpha)(\delta-\nu)} \text{ for }n \text{ large},\]
where $\alpha$ is from \eqref{eq: eta_definition}.
It follows from \eqref{eq: delta_nu} that for $\epsilon$ small, the sum is bounded for some $\eta'>0$ by
\[\left(C \log \frac{1}{\epsilon} \right)^k \epsilon^{\eta'} \text{ for all } k \geq 1\]
if $n$ is large.

To bound the contribution from points near the boundary, we sum over positions of boxes close to the boundary, using the universal behavior of the exponent for $\pi_3^H$, the half-plane 3 arm probability \cite[Theorem~24]{nolin}. We obtain the bound
\begin{align*}
\frac{n}{\epsilon^\delta n}\left(C \log \frac{1}{\epsilon}\right)^k \sum_{l=1}^{\lceil 3 \epsilon^{\nu-\delta}\rceil} \pi_6(\epsilon^\delta n,l\epsilon^\delta n)\pi_3^H (l\epsilon^\delta n,\epsilon^\nu n)& \le \epsilon^{\delta-2\nu}\left(C \log \frac{1}{\epsilon}\right)^k \sum_{l=1}^{\lceil 3\epsilon^{\nu-\delta}\rceil}l^{-\alpha}\\
&\le \left(C \log \frac{1}{\epsilon}\right)^k \epsilon^{\delta-2\nu+(1-\alpha)(\nu-\delta)}\\
&\le \left(C \log \frac{1}{\epsilon}\right)^k \epsilon^{\alpha\delta-\nu(1+\alpha)}.
 \end{align*}
Using \eqref{eq: delta_nu}, this will be bounded by $\left( C \log \frac{1}{\epsilon} \right)^k \epsilon^{\eta'}$ for $\eta'>0$ sufficiently small.
\end{proof}

It follows that to estimate $\mathbf{P}(A_n, D_k)$, we can write
\begin{align*}
\mathbf{P}(A_n,D_k) &\le 10\epsilon^c +\mathbf{P}(D_k, I_k) + \left( C \log \frac{1}{\epsilon} \right)^k \epsilon^c \\ 
&\quad +\sum_{t_1,\ldots, t_k, p} \mathbf{P}(A_n,D_k, T_i=t_i~ \forall i, P(T_k)=p),
\end{align*}
where the sum is only over $t_i$ and $p$ such that $I_k$ does not occur and
\begin{enumerate}
\item $\mathrm{dist}((T_k \cup P(T_k)),Corner_i)> 10\epsilon^\nu n$, for all $i=1,\ldots,4$ and
\item $e_k$ is at least distance $2\epsilon^\nu n$ from the bottom and right sides of $[-n,n]^2$,
\end{enumerate}
where $e_k$ is the edge where $p$ and $t_k$ meet. Condition 2 follows from a similar half-plane $3$-arm argument: this is where the term $\left( C \log \frac{1}{\epsilon} \right)^k \epsilon^c$ comes from.

Consider concentric annuli 
\[Ann(e_k,2^l)=B(e_k,2^l)\setminus B(e_k,2^{l-1}),\]
for $l= 4+\lceil\log_2 \epsilon^\delta n\rceil, \ldots, \lceil\log_2 \epsilon^\nu n \rceil$. On $\{D_k, T_i=t_i~\forall i, P(T_k)=p\}$, if $Ann(e_k, 2^l)$ contains more than $\epsilon n^{2}\pi_3(n)$ points connected to $t_k$ by two disjoint open paths, and to $p$ by a closed dual path, then 
\[\tilde{L}_n \ge \epsilon n^2 \pi_3(n).\]
We claim that with probability bounded away from 0 independently of $l$, $n$, $t_k$ and $p$, the number of such edges in $Ann(e_k,2^l)$ is bounded below by
\begin{equation}\label{eq: secondmoment-lwr}
C\epsilon^{2\delta} n^2\pi_3(\epsilon^\delta n).
\end{equation}
From \eqref{eq: delta_nu}, this is bounded below by $\epsilon n^{2} \pi_3(n)$ for $\epsilon$ small.

To obtain the lower bound \eqref{eq: secondmoment-lwr} with uniformly positive probability, we use the second moment method to find a large number of three arm points in a box inside the region $\mathcal{R}$ below $t_k$ and to the right of $p$. To this effect, we need to show that it is always possible to find such a box of side-length $r$ at least $\epsilon^\delta n$. In addition, to use RSW and connect the three arm points to $p$ and $t_k$, we need the box to be at a distance from these crossings that is roughly comparable to $r$, and the crossings themselves to be separated on this scale.

Define the annulus 
\[Ann_l'\equiv B(e_k,7/4\cdot 2^{l-1})\setminus B(e_k, 5/4 \cdot 2^{l-1}).\]

\begin{claim}\label{claim: box}
Suppose that $\mathrm{dist}(Ann(e_k,2^l)\cap t_k, Ann(e_k,2^l)\cap p)\ge \epsilon^\delta n$. For each annulus $Ann(e_k,2^l)$, there is a box $B$ of dimensions $r \times r$ with $r \ge (1/10)\epsilon^\delta n$ centered at a point in $\mathcal{R}\cap Ann_l'$, and such that also $B \cap Ann_l' \subset \mathcal{R}$. Moreover, 
\begin{align*}
\partial B\cap p\cap Ann_l'  &\neq \emptyset,\\
\partial B\cap t_k\cap  Ann_l'  &\neq  \emptyset.
\end{align*}
\begin{proof}
Starting from the first crossing by the closed arm with $k-1$ defects from $e_k$, enumerate the crossings of $Ann(e_k,2^l)$ by the arms emanating from $e_k$. We let the $t_k(l)$ be the last crossing of the annulus by $t_k$ in this clockwise order, and $p(l)$ be the first crossing of $Ann(e_k, 2^l)$ after $t_k$.

Let $\mathcal{U}$ be the region bounded by $t_k(l)$ and $p(l)$, and the segments of $\partial B(e_k,2^{l-1})$ and $\partial B(e_k,2^l)$, respectively, between the endpoints of these two crossings, always in the clockwise order. Let 
\begin{equation}
\label{eqn: sdef}
\mathcal{S}=\mathcal{U}\cap \mathcal{R}.
\end{equation}
 By the definition of $t_k(l)$ and $p(l)$, we have $\mathcal{S}\neq\emptyset.$
Since $\mathcal{U}$ contains no crossing of $Ann(e_k,2^l)$ by either $p$ or $t_k$, the boundary $\partial S\subset \partial \mathcal{U}\cup\partial \mathcal{R}$ consists of $t_k(l)$, $p(l)$, portions of $\partial B(e_k,2^{l-1})$ and $\partial B(e_k,2^l)$, and finitely many arcs of $p$ or $l$ with both endpoints on either $\partial B(e_k,2^{l-1})$ or $\partial B(e_k,2^l)$. 

Similarly, we let $t_k'(l)$ be the last crossing, in the clockwise order, of $Ann_l'$ by $t_k(l)$ and $p'(l)$ the first crossing after $t_k'(l)$. These crossings, together with segments of $\partial B(e_k,5/4\cdot 2^{l-1})$ and $\partial B(e_k,7/4\cdot 2^{l-1})$ between their endpoints, delimit a region $\mathcal{U'}\subset \mathcal{U}$. Finally, we let $\mathcal{S}'=\mathcal{U}'\cap \mathcal{R}$.

In particular, $\mathcal{S}$ and $\mathcal{S}'$ are Jordan domains, and so there exists a path $\gamma:[0,1]\rightarrow \mathcal{S}'$ with $\gamma(0)\in t'_k(l)$ and $\gamma(1)\in p'(l)$.

We now define the compact sets 
\begin{align*}
S_1'&=\overline{\mathcal{S}'\cap p}\\
S_2'&=\overline{\mathcal{S}'\cap t_k}.
\end{align*}
Letting 
\begin{align*}
d_1(t)&=\mathrm{dist}_\infty(\gamma(t),S_1')\\
d_2(t)&=\mathrm{dist}_\infty(\gamma(t),S_2'),
\end{align*}
if $\mathrm{dist}(p\cap Ann(e_k,2^l),t_k\cap Ann(e_k,2^l))\ge \epsilon^\delta n$, then
\begin{align*}
d_1(0)&= d_2(1) = 0,\\
d_1(1)&>0,\\
d_2(0)&>0.
\end{align*}
By continuity, $(d_1-d_2)(t_0)=0$ for some point $t_0\in (0,1)$. Consider the box $B=B(\gamma(t_0),d_1(t_0))$. Since $\mathrm{dist}(S_1',S_2')\ge \epsilon^\delta n$, $B$ has side length $r \ge \epsilon^\delta n/10$, and $\partial B$ contains two points $q_1\in S_1'$ and $q_2\in S_2'$.
\end{proof}
\end{claim}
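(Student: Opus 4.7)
The plan is to realize $B$ as the maximal $\ell^\infty$-ball inscribed in a suitable Jordan subdomain of $\mathcal{R}\cap Ann_l'$ whose closure just touches both $t_k$ and $p$. Existence of the center of $B$ will come from an intermediate-value argument on two distance functions, and its size bound from the hypothesized $\epsilon^\delta n$-separation of $t_k$ and $p$ inside $Ann(e_k,2^l)$.

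First I would carve out a tractable Jordan subdomain of $\mathcal{R}\cap Ann_l'$. Since $t_k$ and $p$ may cross $Ann(e_k,2^l)$ many times, I single out one crossing of each: enumerate the crossings of $Ann(e_k,2^l)$ in clockwise order starting from the arms emanating from $e_k$, let $t_k(l)$ be the last crossing made by $t_k$ in this enumeration, and let $p(l)$ be the first subsequent crossing, which necessarily belongs to $p$. Together with portions of $\partial B(e_k,2^{l-1})$ and $\partial B(e_k,2^l)$ joining their endpoints, these two arcs bound a Jordan domain $\mathcal{U}\subset Ann(e_k,2^l)$ lying on the $\mathcal{R}$-side of both paths; set $\mathcal{S}=\mathcal{U}\cap\mathcal{R}$. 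Because of the ``last-then-first'' convention, no further crossing of $t_k$ or $p$ is interleaved between $t_k(l)$ and $p(l)$ along $\partial Ann(e_k,2^l)$, and $\mathcal{S}$ is a nonempty Jordan subdomain. Repeating the construction inside the thinner annulus $Ann_l'$ (which sits centrally in $Ann(e_k,2^l)$ with a buffer of width $2^{l-2}$ on each side) yields subcrossings $t'_k(l),p'(l)$ and a Jordan subdomain $\mathcal{S}'\subset\mathcal{S}\cap Ann_l'$.

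Next, fix any continuous $\gamma:[0,1]\to\overline{\mathcal{S}'}$ with $\gamma(0)\in t'_k(l)$ and $\gamma(1)\in p'(l)$, and set
\[
d_1(t)=\dist_\infty\!\bigl(\gamma(t),\overline{\mathcal{S}'\cap p}\bigr), \qquad d_2(t)=\dist_\infty\!\bigl(\gamma(t),\overline{\mathcal{S}'\cap t_k}\bigr),
\]
where $\dist_\infty$ is the $\ell^\infty$-distance, so that metric balls are axis-parallel squares. Under the separation hypothesis, $d_2(0)=0<d_1(0)$ and $d_1(1)=0<d_2(1)$, so the continuous function $d_1-d_2$ has a zero $t_0\in(0,1)$ with $d_1(t_0)=d_2(t_0)=:r$. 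Taking $B$ to be the axis-parallel square of half-side $r$ centered at $\gamma(t_0)$ gives the candidate: its boundary meets both $p$ and $t_k$ at the witnesses of $d_1(t_0)$ and $d_2(t_0)$, and its open interior avoids $p\cup t_k$ entirely, so $B\cap Ann_l'\subset\mathcal{R}$. Provided $r$ is smaller than the $2^{l-2}$-buffer, both witness points lie inside $Ann_l'$ and hence on the desired sub-arcs.

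For the size bound, since both witnesses lie in $Ann(e_k,2^l)$ the triangle inequality together with $\dist(p\cap Ann(e_k,2^l), t_k\cap Ann(e_k,2^l))\geq \epsilon^\delta n$ forces $2r\geq \epsilon^\delta n$, comfortably giving $r\geq \epsilon^\delta n/10$. The main obstacle I foresee is strictly the topological bookkeeping in the first step: one must verify that $\mathcal{S}'$ is genuinely a Jordan domain (not, say, an annulus or disconnected), so that the path $\gamma$ exists and $d_1,d_2$ are continuous. This reduces to checking that with the ``last-then-first'' crossing convention, no other arc of $t_k$ or $p$ can reenter $\mathcal{U}\cap Ann_l'$ and disconnect $\mathcal{S}'$; this follows from the self-avoidance of both paths and the adjacency of $t_k(l)$ and $p(l)$ along the outer boundary. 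Once this bookkeeping is in place, the rest of the argument is a clean intermediate-value/triangle-inequality computation.
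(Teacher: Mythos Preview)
Your proposal is correct and follows essentially the same approach as the paper's own proof: both carve out a Jordan subdomain $\mathcal{S}'\subset\mathcal{R}\cap Ann_l'$ via the ``last $t_k$-crossing, first subsequent $p$-crossing'' convention, run a path $\gamma$ across it, and apply the intermediate value theorem to the difference of the two $\ell^\infty$-distance functions to locate the center of $B$, with the size bound coming from the separation hypothesis via the triangle inequality. You are in fact slightly more careful than the paper in flagging the Jordan-domain bookkeeping and in correctly orienting the boundary conditions $d_2(0)=0<d_1(0)$, $d_1(1)=0<d_2(1)$; one small point is that your buffer remark (``provided $r$ is smaller than the $2^{l-2}$-buffer'') is unnecessary, since the witnesses $q_1,q_2$ already lie in $\overline{\mathcal{S}'\cap p}$ and $\overline{\mathcal{S}'\cap t_k}$, which are automatically contained in $\overline{Ann_l'}$.
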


By Claim \ref{claim: box}, we can choose points $q_1\in \partial B\cap p\cap Ann_l'$ and $q_2\in \partial B\cap t_k\cap Ann_l'$. Our goal is to use RSW to connect three arm points from the inside of $B$ to $p$ and $t_k$ close to $q_1$ and $q_2$, respectively. It remains to ensure that the configuration of paths in a neighborhood of $q_1$ and $q_2$ allows us to do this with positive probability. This is the purpose of the final step in our construction.

\begin{df}We say that $q_1$ and $q_2$ have \emph{linear separation less than} $\kappa$ along $\partial (B\cap Ann_l')$ if there is a connected segment $\alpha:[0,1]\rightarrow \partial (B\cap Ann_l')$ of length $\le \kappa$ such that $q_1,q_2 \in \alpha([0,1])$.
\end{df}
In the previous definition, it is important that the segment lie in $\partial (B\cap Ann_l')$ and not merely $\partial B$. This is needed to deal with the extremal case where $B$ contains $B(e_k,5/4\cdot 2^{l-1})$ and part of the boundary $\partial B$ coincides with $\partial Ann_l'$. Note that since $B\subset \mathcal{R}$, the interior of $B$ cannot $B(e_k,(5/4)\cdot 2^{l-1})$.

Next we define two annuli 
\begin{align*}
a(q_1)&=B(q_1,r/40)\setminus B(q_1,r/80)\\ 
a(q_2)&=B(q_2,r/40)\setminus B(q_1,r/80).
\end{align*}

\begin{df}\label{def: good}We say the configuration outside $\mathcal{R}$ is \emph{good} for the box $B$ if
\begin{enumerate}
\item $q_1$ and $q_2$ have linear separation at least $r/5$ along $\partial(B\cap Ann_l')$,
\item Given any circuit $c$ in $a(q_1)$ around $B(q_1,r/80)$, when $c$ is traversed starting from any point inside $B$, the circuit intersects $p$ before $t_k$ (if it intersects the latter),
\item Given any circuit in $c'$ is $a(q_2)$ around $B(q_2,r/80)$, the circuit intersects $t_k$ before intersecting $p$ (if it intersects the latter).
\end{enumerate}
\end{df}
Let $\tilde{B}$ be the box with the same center as $B$ and a quarter of the side length. If $B\subset Ann_l'$ and the configuration is good then, by placing a closed dual arc in $a(q_1)$ and two disjoint open arcs in $a(q_2)$, any set of well-separated arms (in the sense of \cite[Definition 7]{nolin}) can be extended from the boundary of the box $\tilde{B}$ and connected to $p$ and $t_k$, respectively. Moreover, since $\mathrm{dist}(p\cap Ann(e_k,2^l),t_k\cap Ann(e_k,2^l))\ge \epsilon^\delta n$, if $r \le 5\epsilon^\delta n$, then the configuration is automatically good for $B$. 

We now use an iterative procedure, formalized in Proposition \ref{prop: iteration}. Either we can always extend arms from the smaller box $\tilde{B}$ to $\partial B$ and connect them to $p$ and $t_k$ with positive probability, or we can find a smaller box $B_1$ centered in $Ann(e_k,2^l)$ such that $p$ and $t_k$ intersect the boundary of $B_1$. $B_1$ is then a new candidate to contain at least $\epsilon^\delta n$ three arm points. Since the sizes of the boxes decrease exponentially, eventually we reach scale $\epsilon^\delta n$, in which case the points of $p$ and $t_k$ on $\partial B$ are necessarily separated on the scale of the box. In the process of the iteration, it will be necessary to replace the annulus $Ann_l'$ by progressively larger regions $\mathcal{D}(D)$ which will contain the center of $B_1$ and points of $\partial B_1\cap p$ and $\partial B_1 \cap t_k$.

\begin{prop}\label{prop: iteration}
Suppose $B$ is centered at 
\[x(B)\in\mathcal{D}(D)\equiv\{y\in\mathbb{R}^2: \mathrm{dist}_\infty(y,Ann_l')\le D\},\]
and has side-length $r$. In addition, suppose that
\begin{align*}
\partial B\cap \mathcal{D}(D)\cap p&\neq \emptyset,\\
\partial B \cap \mathcal{D}(D)\cap t_k&\neq \emptyset.
\end{align*}
There is a constant $C>0$ independent of $D$ and a choice of of landing sequence $\{I_i\}$, $i=1,2,3$ on $\partial \tilde{B}$ such that one of the three following options hold: 
\begin{enumerate}
\item every collection of three arms can be extended from $\{I_i\}$ can be extended to $p$ and $t_k$ with probability at least $C$,
\item there exists another box $B_1\in \mathcal{S}$ of side-length at most $(1/5)r$ centered at
\[x(B_1)\in\mathcal{D}(D+ r/40),\]
with $B_1\cap Ann(e_k,2^l)\subset\mathcal{R}$. Moreover, 
\begin{align*}
\partial B_1\cap t_k \cap \mathcal{D}(D+r/40) &\neq \emptyset,\\
\partial B_1 \cap p \cap \mathcal{D}(D+r/40)  &\neq \emptyset.
\end{align*}
\item there exists another box $B_1\in \mathcal{S}$ of side-length at most $(3/5)r$ centered at
\[x(B_1)\in\mathcal{D}(D),\]
with $B_1\cap Ann(e_k,2^l)\subset\mathcal{R}$. Moreover, 
\begin{align*}
\partial B_1\cap t_k \cap \mathcal{D}(D) &\neq \emptyset,\\
\partial B_1 \cap p \cap \mathcal{D}(D)  &\neq \emptyset.
\end{align*}
\end{enumerate}
\end{prop}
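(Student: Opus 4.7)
The proof will proceed by a trichotomous case analysis matching the three options in the statement to three mutually exhaustive scenarios for the local geometry of $p$ and $t_k$ near $\partial B$, guided by the ``good configuration'' notion of Definition~\ref{def: good}. First I would pick arbitrary points $q_1\in\partial B\cap p\cap\mathcal{D}(D)$ and $q_2\in\partial B\cap t_k\cap\mathcal{D}(D)$, which exist by hypothesis. The three scenarios are: (i) the configuration is good at $(q_1,q_2)$, yielding option~1 via RSW and FKG; (ii) the linear-separation condition of Definition~\ref{def: good} fails, producing a moderately smaller box (option~3); (iii) separation holds but one of the orientation conditions in $a(q_1)$ or $a(q_2)$ fails, producing a much smaller box (option~2).

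\textbf{Case (i).} Here the plan is to take $\{I_i\}_{i=1,2,3}$ to be three disjoint arcs on $\partial\tilde B$ positioned on the sides of $\tilde B$ facing $q_1$ and $q_2$, well-separated in the sense required by the arm-extension machinery of \cite[Theorem~11]{nolin}. Using RSW I would construct, with uniformly positive probability, a closed dual circuit in $a(q_1)$ around $B(q_1,r/80)$ together with two disjoint open arcs in $a(q_2)$; by goodness, these meet $p$ and $t_k$ respectively without first hitting the wrong path. Additional RSW crossings in corridors inside $\mathcal{R}$, glued to $\{I_i\}$ via the generalized FKG inequality (Lemma~\ref{lma: fkg}), then extend any three arms landing in $\{I_i\}$ all the way to $p$ and $t_k$ with probability uniform in $D$ and in the realization of $p,t_k$ outside $\mathcal{R}$.

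\textbf{Cases (ii) and (iii).} If linear separation fails then $q_1$ and $q_2$ lie on a common arc $\alpha\subset\partial(B\cap Ann_l')$ of length at most $r/5$. I would take $B_1$ to be an axis-aligned square of side at most $3r/5$ centered at a point of $\alpha\cap\mathcal{R}$ (using planar connectedness of the region $\mathcal{S}$ from \eqref{eqn: sdef} to guarantee that such a point exists); then $q_1,q_2\in\partial B_1$ and the center stays in $\mathcal{D}(D)$, giving option~3. Otherwise separation holds and one orientation condition fails, say at $q_1$: then some dual circuit in $a(q_1)$ around $B(q_1,r/80)$ meets $t_k$ before $p$, which forces $t_k$ to enter $B(q_1,r/40)$. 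Taking $B_1$ to be a square of side at most $r/20$, centered inside $\mathcal{R}\cap\mathcal{D}(D+r/40)$ at distance at most $r/40$ from $q_1$, and arranging that $\partial B_1$ contains both a point of $p$ near $q_1$ and the first entry point of $t_k$ into $B(q_1,r/40)$, I would verify option~2.

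\textbf{Main obstacle.} The principal technical difficulty will be confirming $B_1\cap Ann(e_k,2^l)\subset\mathcal{R}$ for the constructions in cases (ii) and (iii). Since $\mathcal{R}$ is the component of $Ann(e_k,2^l)\setminus(p\cup t_k)$ lying below $t_k$ and to the right of $p$, one must rule out that some other strand of $t_k$ or $p$ crosses $B_1$. The plan is to exploit the extremal characterizations of $t_k$ (the $k$-th highest disjoint open crossing) and of $p=P(t_k)$ (the leftmost closed dual crossing from $t_k$ to the bottom of $[-n,n]^2$): any intrusion into $B_1$ would contradict one of these extremality properties. A secondary care-point is that in case (iii) the center of $B_1$ may need to be shifted by up to $r/40$ in order to lie simultaneously in $\mathcal{R}$ and in $\mathcal{D}(D+r/40)$, and this shift is precisely the source of the $r/40$ expansion of the thickening parameter distinguishing option~2 from option~3 in the statement.
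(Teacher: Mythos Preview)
Your trichotomy (good / bad separation / bad orientation) misses the case that drives most of the difficulty in the paper's argument, and it misassigns the option that carries the $3r/5$ bound.

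The paper proves the proposition (for $D=0$, with the general case obtained by replacing $Ann_l'$ by $\mathcal{D}(D)$) via two sub-results. The first (Proposition~\ref{claim: bprime}) handles the situation $B\subset Ann_l'$: there, failure of \emph{either} the separation or the orientation condition in Definition~\ref{def: good} produces a new box of side at most $r/5$, by repeating the equidistant-point construction of Claim~\ref{claim: box} along a short path $\gamma'\subset\mathcal{S}$. In particular, linear-separation failure yields a box of side $\le r/5$, not $3r/5$ as you claim. The second sub-result (Proposition~\ref{claim: notin}) handles the case $B\not\subset Ann_l'$, i.e.\ when $B$ protrudes into $B(e_k,\tfrac54\cdot 2^{l-1})$. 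There one must route arms from $\tilde B$ to $q_1,q_2$ through the rectangle $R_1=B\cap\{\text{correct side of }L\}$, and a delicate analysis of where $q_1,q_2$ sit relative to the pieces $s_1,s_2,s_3$ of $\partial B$ outside $R_1$ is required. It is precisely this case---when both $q_1,q_2$ lie on one $s_i\cup s_3^i$ with $\mathrm{dist}(s_i,B(e_k,\tfrac54\cdot 2^{l-1}))\le r/10$---that yields the $3r/5$ bound and hence option~3. Your proposal contains no analogue of this boundary-interaction analysis.

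Your plan for verifying $B_1\cap Ann(e_k,2^l)\subset\mathcal{R}$ via the extremality of $t_k$ and $p$ is also not the right mechanism and is unlikely to work: extremality of $T_k$ and $P(T_k)$ does not prevent other strands of $t_k$ or $p$ from entering an arbitrary small box near $q_1$ or $q_2$. The paper avoids this issue entirely by constructing $B_1$ through the equidistant-point argument of Claim~\ref{claim: box}: one exhibits a continuous path $\gamma'\subset\mathcal{S}$ from a point of $t_k$ to a point of $p$ inside the small region (e.g.\ $B(q_1,r/40)$), and the box centered at the point where $d_1=d_2$ with radius equal to that common value automatically satisfies $B_1\cap Ann(e_k,2^l)\subset\mathcal{R}$ because its interior cannot meet $p\cup t_k$. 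The $r/40$ shift in option~2 arises because $\gamma'$ lives in $B(q_i,r/40)$, whose center lies on $\partial B$ and may therefore stick $r/40$ outside $\mathcal{D}(D)$.
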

We apply this proposition repeatedly to the box $B$ found in Claim \ref{claim: box} (for which $D=0$), until either the first condition holds, or the side-length $r$ is smaller than $5 \epsilon^\delta n$ for the first time. By the exponential decrease for both the side length and the expansion of the region $\mathcal{D}(D)$, the box $B$ remains centered in $\mathcal{D}(D+r/40)$ and both $\partial B\cap p$ and $\partial B \cap t_k$ contain a point of $\mathcal{D}(D+r/40)$ at each iteration. Indeed, the box $B$ obtained in Claim \ref{claim: box} has side length at most $7\cdot 2^{l-2}$, whereas 
\[\mathrm{dist}_\infty(Ann_l', Ann(e_k,2^l)) = 2^{l-3}.\]
The corresponding box $B_1$ obtained from applying Proposition \ref{prop: iteration} with $D=0$ lies within $(7/40)\cdot 2^{l-2}< 2^{l-3}$ of $Ann_l'$, and there are points of $\partial B_1\cap t_k$ and $\partial B_1\cap p$ within this distance of $Ann_l'$. In subsequent iterations, the centers of the boxes remain contained in the region within distance
\[(7/40)\cdot 2^{l-2} + 7\cdot 2^{l-2}\sum_{k=1}^\infty 1/(5\cdot 40)^k< 2^{l-3} \]
of $Ann_l'$, and moreover we can find points of $\partial B_1\cap p$ and $\partial B_1\cap t_k$ in this region. Once $r \le 5 \epsilon^\delta n$, we can automatically extend arms from the inside of $\tilde{B}_1$ to $p$ and $t_k$.

For clarity, we derive the first step of the iteration (the case $D=0$) in Propositions \ref{claim: bprime} and \ref{claim: notin} below. The general case follows with nearly identical proofs, replacing $Ann_l'$ by $\mathcal{D}(D)$ at the appropriate places. A key point is that the only step in the construction where the center of the box $B_1$ moves closer to $\partial Ann(e_l,2^l)$ is when we construct circuits around $q_1$ or $q_2$ in the proofs of Proposition \ref{claim: bprime} and \ref{claim: notin}.

The next proposition shows that if the box $B$ obtained from Claim \ref{claim: box} is entirely contained in $Ann_l'$, but the configuration is not good for $B$, we can find a smaller candidate box $B_1$.

\begin{prop}\label{claim: bprime}
Suppose $B\subset Ann_l'$. If the configuration is not good for $B$, there exists another box $B_1\subset \mathcal{S}$, with side length at most $r/5$, centered at a point within distance $r/40$ of $\partial Ann_l'$, and such that
\begin{align*}
\partial B_1 \cap t_k \cap \{x:\mathrm{dist}_\infty(x,Ann_l')\le r/40 \} &\neq \emptyset,\\
\partial B_1 \cap p \cap \{x:\mathrm{dist}_\infty(x,Ann_l')\le r/40 \} &\neq \emptyset.
\end{align*}
The set $\mathcal{S}\subset \mathcal{R}\cap Ann(e_k,2^{l})$ was defined in \eqref{eqn: sdef}.
\begin{proof}
Suppose the first condition in Definition \ref{def: good} fails. Then it is easy to see that there is a box $B_1\subset B$ of side-length no greater than $r/5$ such that $\partial B_1$ contains the segment $[q_1,q_2]\subset \partial B$.

Now suppose, for example, that the condition on $a(q_1)$ fails. Then there exists a path inside $B(q_1,r/40)$ starting inside $B$, which intersects $t_k$ before intersecting $p$. The portion $\gamma'$ of this path between the last intersection with $t_k$ before $p$, and $p$ is contained in $\mathcal{S}$. Indeed, before intersecting $p$, $\gamma'$ never intersects any part of $\partial \mathcal{S}$ except for $t_k$. It must also traverse $t_k$ an even number of times, since it lies inside $\mathcal{S}$ immediately before the first intersection with $p$. (See Figure \ref{bprime}).

\begin{figure}
\centering
\includegraphics[scale = 0.85]{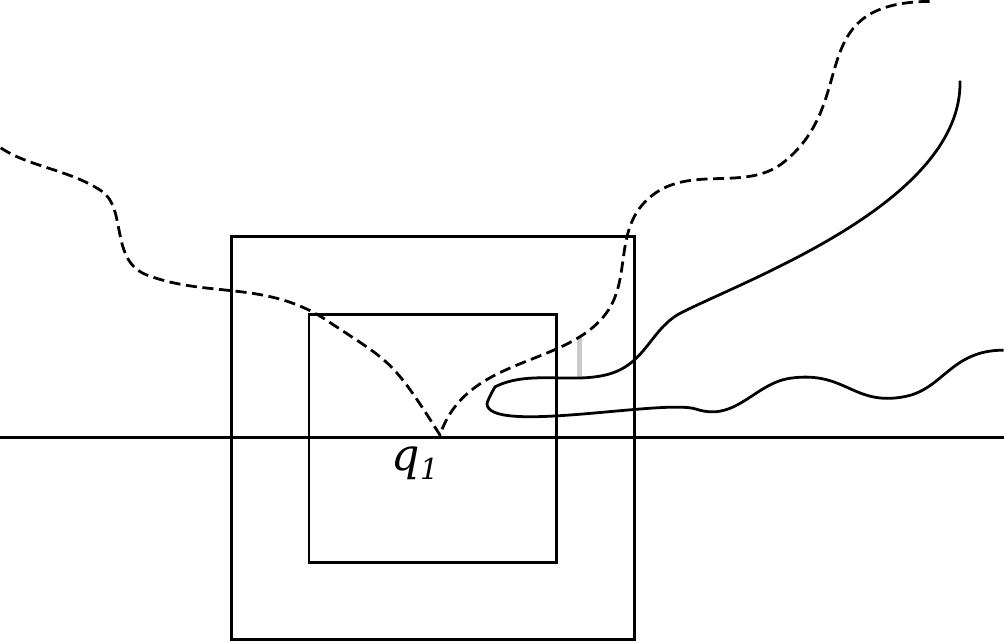}
\caption{Proposition \ref{claim: bprime}: the open path could block closed dual circuits inside the annulus used to connect to the piece of the closed path meeting the box $B$ at $q_1$, but then we can find two points of $p$ and $t_k$ even closer together.}
\label{bprime}
\end{figure}
Repeating the construction in the proof of Claim \ref{claim: box} with $\gamma'$ instead of $\gamma$, we obtain a box $B'\subset \mathcal{S}$ centered at equal distance from $p$ and $q$. Since the entire path $\gamma'$ is contained inside $B(q_1,r/40)\subset Ann(e_k,2^l)$, the box $B'$ has side length at most $r/5$.
\end{proof}
\end{prop}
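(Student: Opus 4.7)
The plan is to case-split on which of the three conditions in Definition \ref{def: good} fails and in each case construct the required smaller box $B_1$.

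First, suppose condition (1) fails, so $q_1$ and $q_2$ have linear separation less than $r/5$ along $\partial(B\cap Ann_l')$. Since both points then lie on a short arc of length at most $r/5$ on $\partial B$, I can select a square $B_1\subset B$ of side length at most $r/5$ whose boundary contains the segment $[q_1,q_2]$. The required non-empty intersections with $p$ and $t_k$ are inherited automatically, and $B_1\subset B\subset Ann_l'$ trivially satisfies the distance condition to $\partial Ann_l'$ (in fact the center lies inside $Ann_l'$).

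The substantive case is when condition (2) fails (condition (3) being symmetric). Here some circuit in $a(q_1)$ surrounding $B(q_1, r/80)$, traversed from a point inside $B$, meets $t_k$ before $p$. From this I would extract a continuous curve $\gamma' \subset B(q_1, r/40)$ beginning on $t_k$ and ending on $p$: follow the failing circuit and splice in a short arc that eventually reaches $p$, then retain only the portion from the \emph{last} visit to $t_k$ before first touching $p$ up to that touching point. A parity argument using the Jordan-curve character of $\partial\mathcal{S}$ shows $\gamma'$ lies entirely in $\mathcal{S}$: between successive crossings of $t_k\cup p$ the path is confined to a single component of the complement of $\partial\mathcal{S}$, and forcing the terminal crossing to land on $p$ from the $\mathcal{S}$-side fixes which component this is.

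Given $\gamma'$, I would replay the construction from the proof of Claim \ref{claim: box}, but with $\gamma'$ in place of the original arc: one tracks along $\gamma'$ the $\ell^\infty$-distances to the piece of $p$ and to the piece of $t_k$ lying inside $B(q_1, r/40)$, applies the intermediate value theorem to find a point where the two distances coincide, and takes the box of that common radius centered there. That box is contained in $\mathcal{S}$, has both $p$ and $t_k$ touching its boundary, and because $\gamma'\subset B(q_1, r/40)$, its center is within $r/40$ of $q_1\in\partial B\subset \partial Ann_l'$ and its side length is at most $r/5$, as needed. The main obstacle I foresee is the parity/containment argument justifying $\gamma'\subset\mathcal{S}$: one has to rule out that other pieces of $t_k$ or $p$ intrude into $B(q_1, r/40)$ and spoil the Jordan separation used in Claim \ref{claim: box}, and one must carefully identify which side of $\partial\mathcal{S}$ the path sits on just before it lands on $p$. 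Once that topological step is secured, pinning down the constants $r/5$ and $r/40$ is only a careful $\ell^\infty$ computation.
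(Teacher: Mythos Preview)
Your proposal follows essentially the same approach as the paper's proof: case-split on which condition of Definition~\ref{def: good} fails, handle the first case by taking a small sub-box of $B$ whose boundary contains both $q_1$ and $q_2$, and handle the second (and symmetrically the third) by extracting from the failing circuit a path $\gamma' \subset B(q_1, r/40)$ running from $t_k$ to $p$, arguing via parity of crossings of $t_k$ that $\gamma' \subset \mathcal{S}$, and then replaying the intermediate-value construction from Claim~\ref{claim: box}. The obstacle you flag --- justifying $\gamma' \subset \mathcal{S}$ --- is exactly the point the paper addresses with the same parity argument you sketch.

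One small slip: you write ``$q_1 \in \partial B \subset \partial Ann_l'$,'' but under the hypothesis $B \subset Ann_l'$ the boundary $\partial B$ is \emph{not} contained in $\partial Ann_l'$. What you actually need (and what holds) is $q_1 \in \overline{Ann_l'}$, so that any point within $r/40$ of $q_1$ lies in $\{x : \mathrm{dist}_\infty(x, Ann_l') \le r/40\}$; this is enough for the iteration in Proposition~\ref{prop: iteration}. Also, your worry that ``other pieces of $t_k$ or $p$ intrude into $B(q_1, r/40)$'' is not something to rule out --- such intrusions are allowed, and the parity argument handles them.
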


For the case where $B$ intersects $\partial Ann_l'$, we have the following proposition. The proof is somewhat involved because it is necessary to keep the center of $B_1$ from being too close $\partial Ann(e_k,2^l)$. Recall that $\tilde{B}$ denotes the box with the same center as $B$ and a quarter of the side-length.
\begin{prop}\label{claim: notin}
There is a constant $C>0$ and a choice of landing sequence $\{I_i\}$, $i=1,2,3$ on $\partial \tilde{B}$ having the properties:
\begin{itemize}
\item every collection of three arms in $\tilde{B}$ from the inside of $\tilde{B}$ to $\{I_i\}$ can be extended to connect to $p$ and $t_k$ with probability at least $C$,
\item the expected number of sites of $\tilde{B}$ having three arms with landing sequence $\{I_i\}$ is at least $C r^2 \pi_3(r)$.
\end{itemize}
 or there exists a box $B'\subset\mathcal{S}$ such that
\begin{enumerate}
\item $B'$ has side length at most $(3/5)r$, is centered at a point of $Ann_l'$, and $\partial B' \cap t_k$ and $\partial B'\cap p$ intersect $Ann_l'$, or
\item $B'$ has side length at most $(1/40)r$, is centered at a point of $\{y: \mathrm{dist}_\infty(y,Ann_l')\le r/40 \}$, and such that $\partial B' \cap t_k$ and $\partial B'\cap p$ intersect $\{y:\mathrm{dist}_\infty(y,Ann_l')\le r/40 \}$.
\end{enumerate}
\begin{proof}
By Proposition \ref{claim: bprime}, we need only consider the case where $B\cap Ann_l'\neq \emptyset$. 

Let $\sigma_0$ be a side of $\partial B(e_k,5/4\cdot 2^{l-1})$ at the least distance from $\gamma(t_0)$, the center of $B$. Let $L$ be the line containing $\sigma_0$. If $B\cap \partial B(e_k,5/4\cdot 2^{l-1})\neq \emptyset$, $L$ separates $B\cap Ann_l'$ into two pieces. We let $R_1$ be the component containing the center of $B$. If $L$ does not intersect $B$ or if $B \cap \partial B(e_k, 5/4 \cdot 2^{l-1}) = \emptyset$, we let $R_1=B \cap Ann_l'$.

Note that $R_1$ is a rectangle, with aspect ratio bounded above by $2$ and below by $1/2$. Moreover, $R_1\subset \mathcal{S}$. If $q_1,q_2\in \partial R_1$ and the configuration is good, then we can extend arms from a landing sequence with $I_1$, $I_2$, $I_3$ lying in $R_1$ to connect to $p$ and $t_k$. If either $q_1$ or $q_2$ is on $\partial R_1$ and one of the conditions in Definition \ref{def: good} fails, we can proceed as in the proof of Proposition \ref{claim: bprime} to find a box $B'\subset R_1$ of side-length $\le r/5$ satisfying the conditions in Proposition \ref{claim: bprime}.

Thus, we can assume that $B\cap B(e_k,5/4\cdot 2^{l-1})$ (so $R_1\neq B$), and at least one of $q_1$, $q_2$ lies on $\partial (B\cap Ann_l')\setminus \partial R_1$. We let $s_1, s_2$ denote the parts of the sides of $B$ that are not in $B(e_k,(5/4)\cdot 2^{l-1})$, but are perpendicular to  $L$ and in the half-plane of $\mathbb{R}^2\setminus L$ which does not contain $R_1$. The definition of $R_1$ implies that each of $s_1$ and $s_2$ has length no greater than $r/2$. $s_1$ or $s_2$ may be empty.

The side of $\partial B$ parallel to $L$ which is not in $\partial R_1$ necessarily intersects $B(e_k,(5/4)\cdot 2^{l-1})$ if $R_1\neq B\cap Ann_l'$. Let $s_3$ be the part of this side which is not in $B(e_k,5/4\cdot 2^{l-1})$. $s_3$ consists of two connected segments $s_3^1$ and $s_3^2$, each possibly empty, with $s_3^1$ connected to $s_1$ and $s_3^2$ connected to $s_2$. See Figure \ref{map}.

\begin{figure}
\centering
\includegraphics[scale = 0.55]{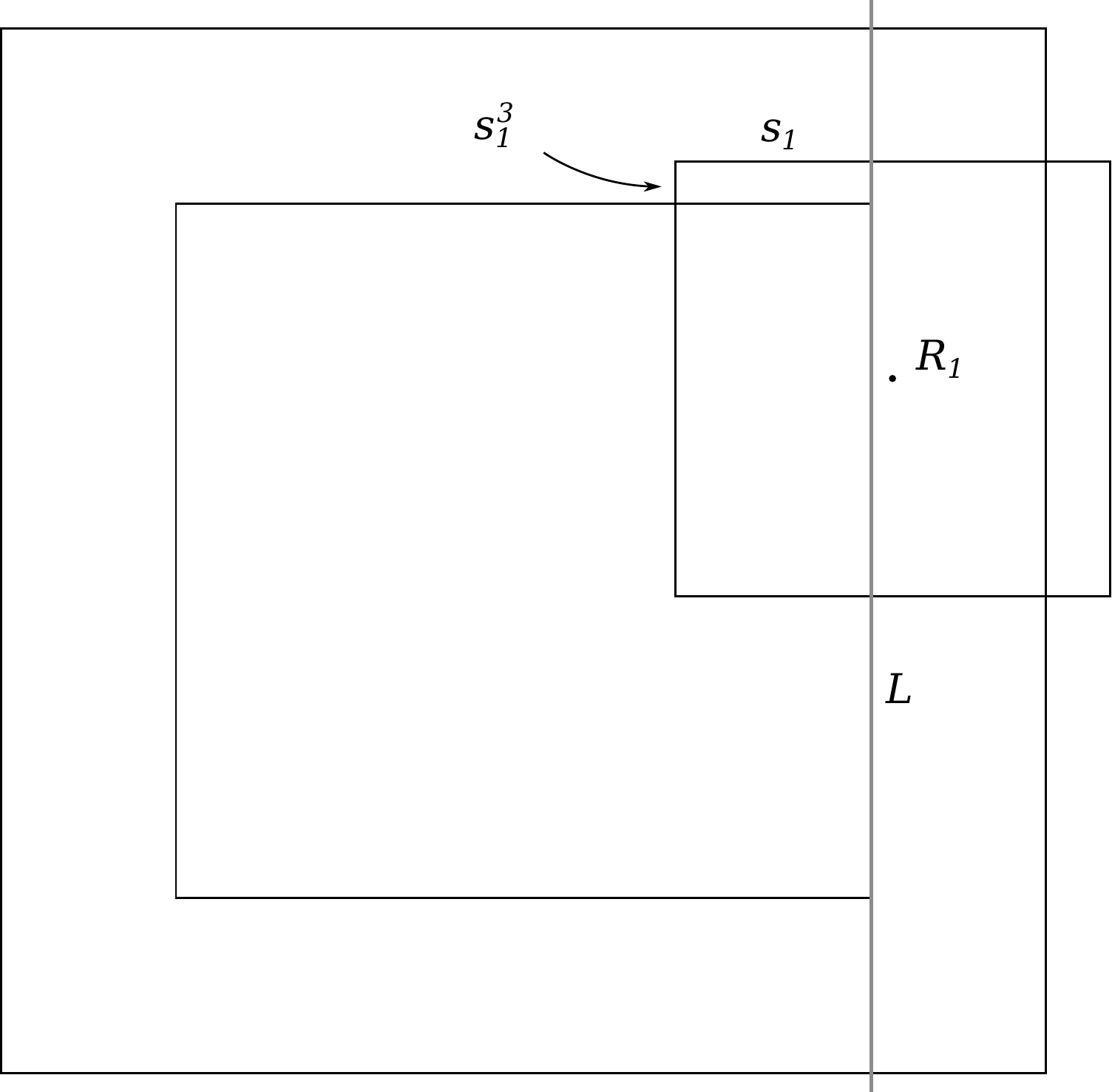}
\caption{An illustration of the case when $B\cap Ann_l' \neq \emptyset$. In this picture, $s_2=s_3^2=\emptyset$.}
\label{map}
\end{figure}

We now have a new dichotomy, which we can apply to each side $s_1$, $s_2$. We state it for $s_1$: 
\begin{itemize}
\item \emph{either} $\mathrm{dist}(s_1,B(e_k,5/4\cdot 2^{l-1}))\ge r/10$, in which case for any choice of locations for $q_1$ or $q_2$ in $s_1\cup s_3^1$, we can use RSW to route arms inside $B\cap Ann_l'$ from a box in $R_1$, provided the configuration is good in the sense of Definition \ref{def: good}, or find a smaller box satisfying condition 2 in the statement of the proposition. 

\item \emph{or} $\mathrm{dist}(s_1,B(e_k,5/4\cdot 2^{l-1}))\le r/10$; in this case $s_3^1$ has length no greater than $r/10$. 

If both $q_1$ and $q_2$ are on $s_1\cup s_3^1$, then they are at linear distance along the boundary less than $r/10+r/2 =(3/5)r$. This implies that we can find a line segments of length $\leq (3/5)r$ joining $q_1$ to $q_2$, and a new box $B_1$ satisfying condition 1 in the statement of the proposition.

If, say, $q_1\in s_1\cup s_3^1$, we place a rectangular region of width $r/20$ along $s_1\cup s_3^1$, outside $B$, and extend it by $r/20$ onto the boundary of $R_1$ (see Figure \ref{fig: continuation}). Either every continuous path traversing this region from its end abutting $\partial R_1$ reaches
\[S_1= \overline{\mathcal{S}\cap p}\] before
\[S_2=\overline{\mathcal{S}\cap t_k},\] or there is a path in $\mathcal{R}$ contained in the region of $\ell_\infty$ diameter $r/2+2r/20=(3/5)r$ joining $S_1$ to $S_2$ inside $\mathcal{R}$. In the former case, we can use RSW to extend a closed dual arm from a landing site in $R_1$ to connect it around $q_1$ to $p$. In the latter case we can find a new box $B'$ as before.
\end{itemize}

\begin{figure}
\centering
\includegraphics[scale = 0.40]{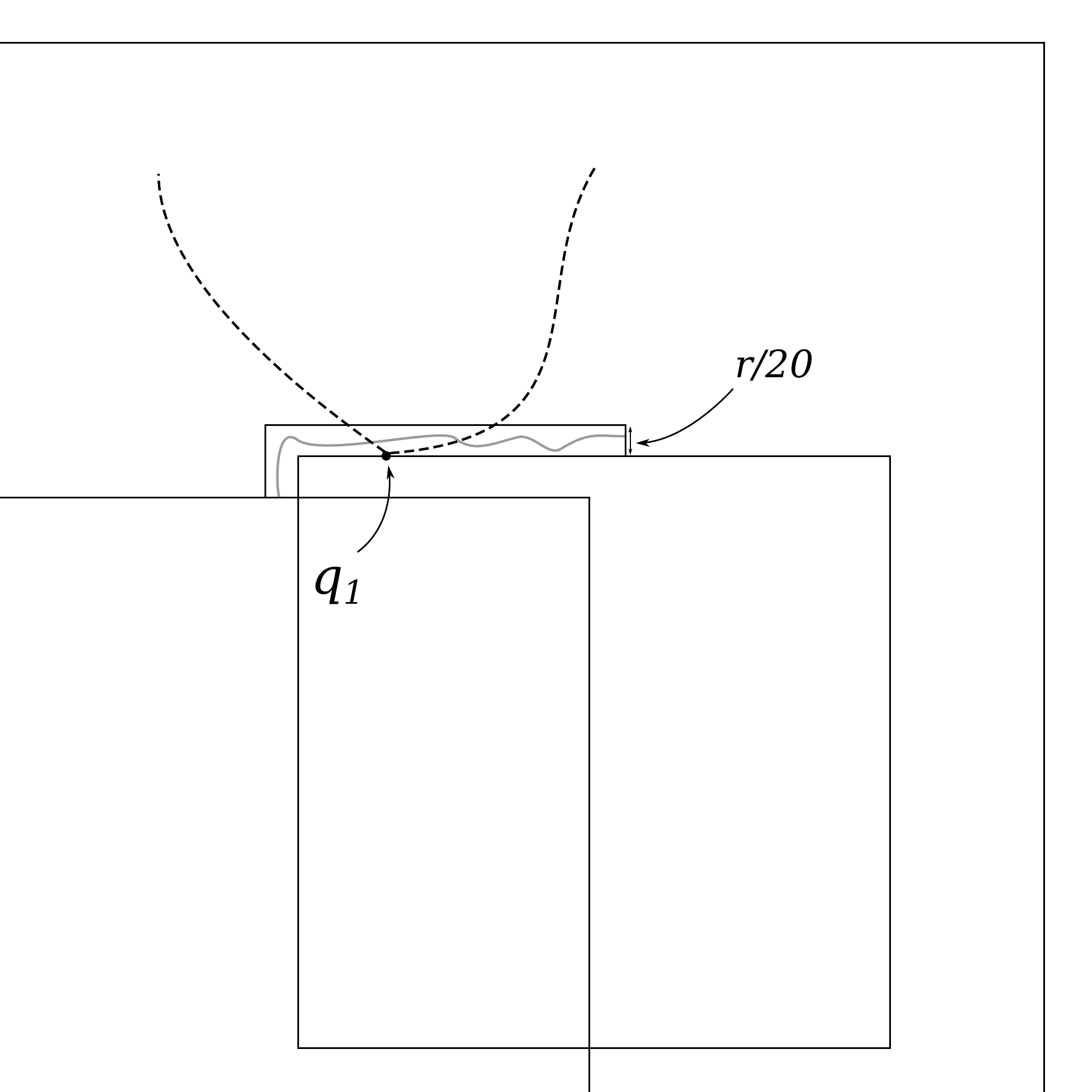}
\caption{Continuing an arm along $s_1\cup s_1^3$: either such a continuation is always possible with positive probability using RSW, or a smaller box can be found.}
\label{fig: continuation}
\end{figure}

To summarize, the previous alternative implies that if $s_1\cup s_2\cup s_3$ contains both $q_1$ and $q_2$, we can either extend arms from a landing sequence in $R_1\cap \partial \tilde{B}$ or we can find a box $B'$ in $R$ satisfying the conditions of the proposition.

It remains only to remark on a final, and somewhat degenerate case, when either $q_1$ or $q_2$ lies on $\partial B \setminus (\partial R_1 \cup s_1 \cup s_2 \cup s_3)$. This can only happen if part of $\partial B$ coincides with $B(e_k,5/4\cdot 2^{l-1})$. Since $R_1$ lies on the side of $L$ opposite $B(e_k,5/4\cdot 2^{l-1})$, at most half of any side of $\partial B$ can intersect $\partial B(e_k,(5/4)\cdot 2^{l-1})$. See Figure \ref{fig: degenerate} for an illustration. This is obvious for the sides perpendicular to $L$. For the remaining side of $\partial B$ to coincide with part of $\partial B(e_k,5/4\cdot 2^{l-1})$ while $q_1$ or $q_2$ lies in the intersection, $r$ has to be at least $(5/2)\cdot 2^{l-1}$. It follows that the length of the intersection is at most $(1/2)r$, and this last case can be treated like the second case in the dichotomy above.
\end{proof}
\end{prop}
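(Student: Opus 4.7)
My plan is to prove the proposition by a detailed geometric case analysis driven by the ``path-trapping'' dichotomy established in Proposition~\ref{claim: bprime}, combined with uniform RSW arm-extension estimates. First I would reduce to the case where $B \cap Ann_l' \neq B$, since when $B \subset Ann_l'$ the statement follows directly from Proposition~\ref{claim: bprime}. In the remaining case, I would carve out a rectangular sub-region $R_1 \subset B \cap Ann_l'$ with bounded aspect ratio (between $1/2$ and $2$) by cutting $B$ with the line $L$ through the nearest side of $\partial B(e_k,(5/4)\cdot 2^{l-1})$ and taking the component of $B \cap Ann_l'$ on the side of $L$ containing the center of $B$. Since $R_1 \subset \mathcal{S}$ and has bounded aspect ratio, standard RSW and gluing estimates in $R_1$ give uniform constants, independent of the scale and the realization of $p, t_k$.

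Next, I would decompose $\partial(B \cap Ann_l') \setminus \partial R_1$ into the two perpendicular ``fins'' $s_1, s_2$ (each of length at most $r/2$ by construction of $R_1$) and the remaining parallel ``caps'' $s_3^1, s_3^2$, whose lengths are controlled by $\mathrm{dist}_\infty(s_i, B(e_k,(5/4)\cdot 2^{l-1}))$. The case analysis splits according to where $q_1$ and $q_2$ lie. If both of $q_1, q_2$ lie on $\partial R_1$, I would apply the reasoning of Proposition~\ref{claim: bprime} verbatim inside $R_1$: either the configuration along the two small annuli $a(q_1), a(q_2)$ is good, in which case a landing sequence $\{I_i\} \subset \partial\tilde B \cap R_1$ supports an RSW extension of any three arms to $p$ and $t_k$ with uniformly positive conditional probability; or a smaller box $B' \subset R_1$ of side length $\le r/5$ can be constructed, yielding option~(2) with condition~1.

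The substantive new work is handling $q_1$ or $q_2$ on $s_1 \cup s_2 \cup s_3$. For each fin $s_i$, I would apply a sub-dichotomy driven by $\mathrm{dist}_\infty(s_i, B(e_k,(5/4)\cdot 2^{l-1}))$: if this distance is at least $r/10$, there is enough room to slide a corridor of width comparable to $r$ alongside $s_i$, extend arms from $R_1 \cap \partial \tilde B$ into this corridor via RSW, and join them to $p$ or $t_k$; otherwise $s_3^i$ is short ($\le r/10$), and the linear distance along $\partial(B \cap Ann_l')$ between $q_1$ and $q_2$ is at most $(1/2)r + 2(r/10) = (3/5)r$, so a box $B'$ of side length $\le (3/5)r$ centered on $Ann_l'$ can be built containing a segment from $q_1$ to $q_2$, giving condition~1 of option~(2). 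In the cross case, where, say, $q_1 \in s_1 \cup s_3^1$ but the corridor continuation fails, I would replay the Proposition~\ref{claim: bprime} argument inside a thin rectangle of width $r/20$ laid along $s_1 \cup s_3^1$ and extending by $r/20$ into $R_1$: either every continuous path crossing it must hit $S_1 = \overline{\mathcal S \cap p}$ before $S_2 = \overline{\mathcal S \cap t_k}$, so RSW places a closed dual arc in $a(q_1)$ connecting a landing arm in $R_1 \cap \partial \tilde B$ to $p$, or a direct connection between $S_1$ and $S_2$ sits in a region of $\ell_\infty$-diameter $\le (3/5)r$, producing a new box $B'$ of side length $(1/40)r$ centered within $\mathrm{dist}_\infty \le r/40$ of $Ann_l'$, which is exactly condition~2 of option~(2).

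The expected-count lower bound $Cr^2\pi_3(r)$ in the first alternative would then follow by arm separation \cite[Theorem~11]{nolin}: for each edge $e$ in $\tilde B$ having three arms to $\partial \tilde B$ with prescribed color sequence, separation allows me to require the arms to land in $\{I_i\}$ up to a uniform multiplicative loss, and the union bound over $e\in \tilde B$ gives $\mathbb{E}\# \gtrsim r^2 \pi_3(r)$. The main obstacle I anticipate is the genuinely degenerate case in which $q_1$ or $q_2$ lies on a portion of $\partial B$ that coincides with $\partial B(e_k,(5/4)\cdot 2^{l-1})$: this requires $r \gtrsim 2^{l-1}$, and I would need to verify that at most half of any side of $\partial B$ can lie on that inner square (using that $R_1$ is placed on the far side of $L$), so that such a coincidence contributes an arc of length at most $r/2$ that may be treated by the same ``fin'' analysis. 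Maintaining the exact expansion budget $r/40$ at each step is what guarantees that the inductive iteration in Proposition~\ref{prop: iteration} terminates inside $\mathcal D(D+r/40)$ rather than escaping $Ann(e_k,2^l)$.
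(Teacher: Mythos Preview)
Your proposal is correct and follows essentially the same approach as the paper: the same reduction via Proposition~\ref{claim: bprime}, the same rectangle $R_1$ cut off by the line $L$, the same decomposition of $\partial(B\cap Ann_l')\setminus\partial R_1$ into the fins $s_1,s_2$ and caps $s_3^1,s_3^2$, the same $r/10$ distance dichotomy, the same width-$r/20$ corridor argument, and the same handling of the degenerate case where $\partial B$ coincides with part of $\partial B(e_k,(5/4)\cdot 2^{l-1})$. The only discrepancy is a harmless arithmetic slip (your $(1/2)r+2(r/10)$ equals $(7/10)r$, not $(3/5)r$; the paper's bound $r/2+r/10=(3/5)r$ comes from $|s_1|\le r/2$ and $|s_3^1|\le r/10$ alone).
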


\begin{figure}
\centering
\includegraphics[scale = 0.40]{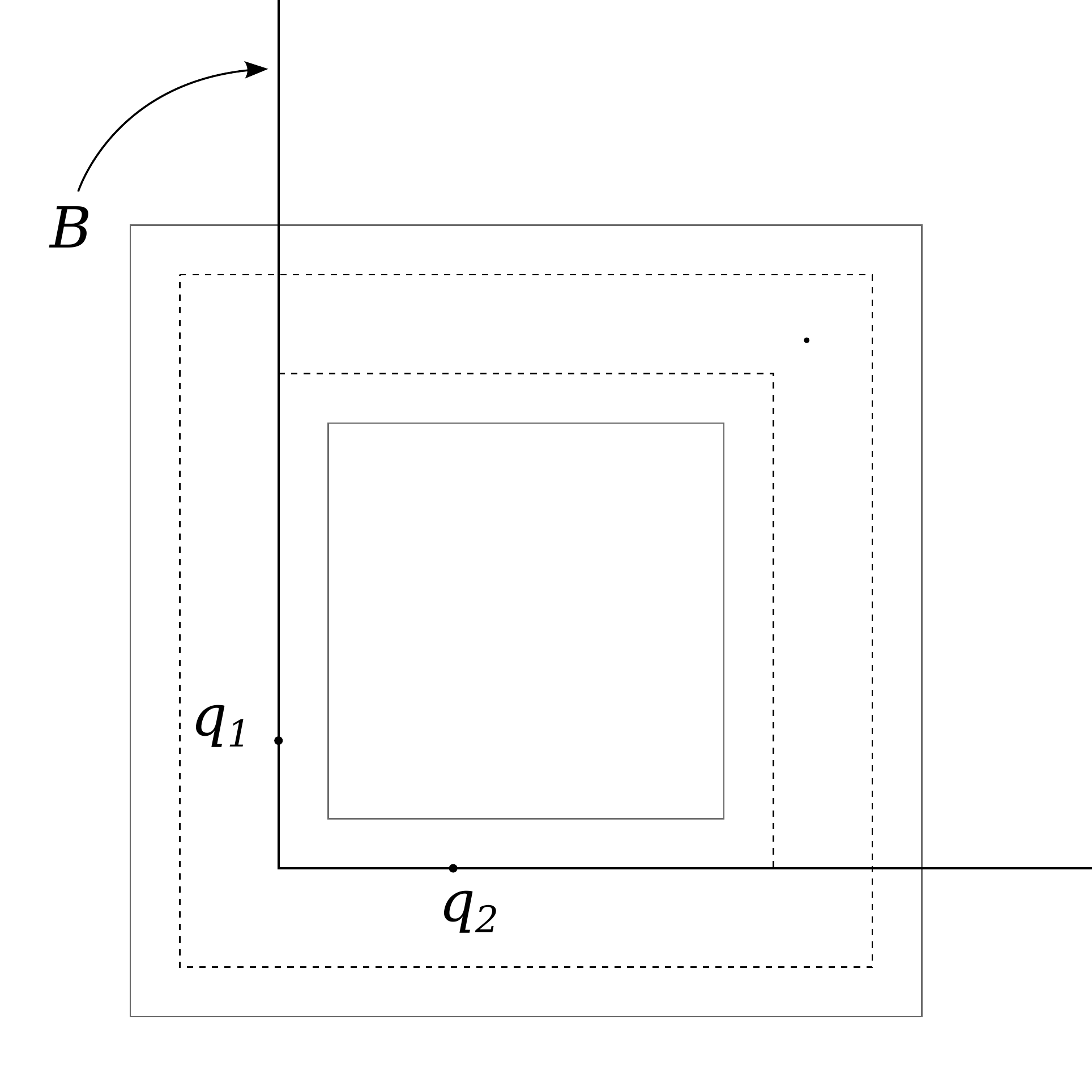}
\caption{A configuration where segments of two sides of $\partial B$ coincide with $\partial B(e_k,5/4\cdot 2^{l-1})$. The dotted annulus is $Ann_l'$.}
\label{fig: degenerate}
\end{figure}

We can now conclude the proof. On $\{D_k, T_k=t_k, P(T_k)=p\}$, let $F(2^l,t_k,p)$ be the event there are no more than $C\epsilon^{2\delta}n^{2}\pi_3(\epsilon^\delta n)$ edges connected to $t_k$ by two disjoint open paths and to $p$ by a closed dual path in the region below $t_k$ and to the right of $p$ in $Ann(e_k,2^l)$.
Then:
\begin{align*}
&\mathbf{P}(A_n, D_k, T_1=t_1,\ldots,T_k=t_k, P(T_k)=p)\\ 
\le &~\mathbf{P}(T_1=t_1,\ldots, T_k=t_k, D_k, P(T_k)=p, \cap_l F(2^{l},t_k,p))\\
\le &~(1-C)^{c\log_2 \frac{1}{\epsilon}}\mathbf{P}(T_1=t_1,\ldots,T_k=t_k, P(T_k)= p).
\end{align*}
Putting everything together, we find:
\[\mathbf{P}(A_n,D_k) \le \epsilon^{c'} + \left( C \log \frac{1}{\epsilon} \right)^k \epsilon^{c'}\]
for $c'>0$ small enough, which is \eqref{eqn: akdk}.

\section{Lemmas of a topological nature}
\label{sec: lemmata}
In this section, we complete the detour construction by providing proofs of the proposition and lemmas assumed in Sections \ref{sec: short} and \ref{sec: detours}.

First, recall Proposition \ref{prop: four}: If $\omega\in C_n$, then for distinct edges $e,f \in \gamma_n$, $\pi(e)$ and $\pi(f)$ are either equal or have no vertices in common. 

Regarding $\sigma_n$, we establish the following two properties:
\begin{enumerate}
\item (\emph{Lemma \ref{circuit}}) For $\omega \in C_n,~ \sigma_n$ is an open circuit in $A(n)$ surrounding the origin.
\item (\emph{Lemma \ref{lem: piempty}}) For $\omega \in C_n$, if $e \in \hat \gamma_n \setminus \hat{\Pi}$ then $\pi(e) = \emptyset$.
\end{enumerate}

\subsection{Proof of  Proposition \ref{prop: four}}
Because $\pi(e)$ was defined as the first element of $\mathcal{S}(e)$ in a deterministic ordering, we see that it will suffice to show: given $P(e) \in \mathcal{S}(e)$ and $P(f) \in \mathcal{S}(f)$, if
\begin{equation}
V(P(e)) \cap V(P(f)) \neq \emptyset, \label{eqn: intersection}
\end{equation}
(their vertex sets intersect) then 
\begin{equation}
P(e) \in \mathcal{S}(f) \text{ and } P(f) \in \mathcal{S}(e).
\end{equation}
Now, suppose the intersection of $\pi(e)$ and $\pi(f)$ is non-empty. Then $\pi(e)\in \mathcal{S}(e)\cap\mathcal{S}(f)$, so $\pi(e)=\pi(f)$.

Let $P(e)\in S(e)$. By extending the two open ends of the closed path in condition 4. to meet the midpoints of the edges $\{w_0,w_0-\mathbf{e}_1\}$ and $\{w_M, w_M+\mathbf{e}_1\}$, we can form a Jordan curve $\theta(e)$ (i.e. a continuous, self-avoiding closed curve) by traversing the closed dual path from $w_0+(1/2)(-\mathbf{e}_1+\mathbf{e}_2)$ to $w_M+(1/2)(\mathbf{e}_1+\mathbf{e}_2)$, traversing the path $Q(e)$ (listed as $Q$ in the definition of $\mathcal{S}(e)$), and returning to $w_0+(1/2)(-\mathbf{e}_1+\mathbf{e}_2)$. By the Jordan Curve Theorem, any connected set of which does not intersect $\theta(e)$ must lie completely on either side of $\theta(e)$.

We apply the preceding to $P(e)\setminus\{w_0(e),w_M(e)\}$ and $P(f)\setminus \{w_0(f),w_M(f)\}$. By assumption (Condition 1.) both sets lie in the exterior of $\gamma_n$, so neither can intersect $Q(e)$ or $Q(f)$. Since they consist of open edges, they also cannot intersect either the dual portion of $\theta(e)$ or that of $\theta(f)$. Thus, assuming \eqref{eqn: intersection}, we have
\begin{claim}
Except for the endpoints $w_0(e)$, $w_0(f)$, $w_M(e)$, $w_M(f)$, the paths  $P(e)$ and $P(f)$, considered as the union of their edges and vertices, lie in the same connected components of $\mathbb{R}^2\setminus \theta(e)$ and $\mathbb{R}^2\setminus \theta(f)$.
\end{claim}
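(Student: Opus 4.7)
My plan is to apply the Jordan Curve Theorem to $\theta(e)$ and $\theta(f)$ and to verify that, after removing the four listed endpoints, each of $P(e)$ and $P(f)$ is a connected subset of $\mathbb{R}^2$ that is disjoint from both Jordan curves. Once this disjointness is in place the claim is immediate: a connected subset of $\mathbb{R}^2 \setminus \theta$ lies in a single connected component of that complement, as already observed just above the claim.

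The substantive work is the disjointness check, and I would organize it by decomposing each $\theta(\cdot)$ into three pieces: the subpath $Q(\cdot) \subset \gamma_n$, the closed dual shielding path $R(\cdot)$, and the two short connector segments (of length $1/\sqrt{2}$) that join the ends of $R(\cdot)$ to the adjacent primal vertices $w_0(\cdot)$ and $w_M(\cdot)$. For the primal piece $Q(f)$, Condition~1 of Definition~\ref{def: shieldeddetours} places every interior vertex of $P(e)$ in $A(n) \cap \text{ext}\,\gamma_n$; since $Q(f) \subset \gamma_n$, these interior vertices cannot sit on $Q(f)$, and no edge of $P(e)$ can cross an edge of $\gamma_n$ without forcing one of its endpoints onto $\gamma_n$. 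For the dual piece $R(f)$, the edges of $P(e)$ are open primal edges while those of $R(f)$ are closed dual edges; the identity $\omega(e') + \omega^*((e')^*) = 1$ prevents a primal edge and its dual from being traversed simultaneously by $P(e)$ and $R(f)$, so these two sets do not cross as curves in $\mathbb{R}^2$. Finally, the two short connector segments of $\theta(f)$ meet the lattice only at $w_0(f)$ and $w_M(f)$, which are among the four excluded endpoints. The identical reasoning with $f$ replaced by $e$ shows that $P(e) \setminus \{w_0(e), w_M(e)\}$ also avoids $\theta(e)$, and symmetric reasoning handles the two statements involving $P(f)$.

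To conclude, each truncated path $P(\cdot) \setminus \{\text{its two endpoints}\}$ is the continuous image of an open interval and is therefore connected as a subset of $\mathbb{R}^2$; since it avoids the relevant Jordan curve, it must lie in a single component of the complement. I do not expect a serious obstacle beyond the careful bookkeeping around the two short connector segments and the invocation of the standard planar-duality fact that an open primal path and a closed dual path cannot meet topologically. Both points are brief and should each occupy only a sentence or two in the full write-up.
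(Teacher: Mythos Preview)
Your approach is the same as the paper's, and the disjointness check is correct: split $\theta(\cdot)$ into the primal subpath $Q(\cdot)\subset\gamma_n$, the closed dual arc $R(\cdot)$, and the short connectors, then use Condition~1 for the first piece and the open/closed dichotomy for the second. This is exactly what the paper does.

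There is one missing step at the end. The claim asserts not merely that each truncated path lies in \emph{some} single component of $\mathbb{R}^2\setminus\theta(e)$ (and of $\mathbb{R}^2\setminus\theta(f)$), but that $P(e)$ and $P(f)$ lie in the \emph{same} component; this is precisely what is used afterward (``$P(f)$, lying as it does in the same component as $P(e)$\ldots''). Your concluding paragraph only delivers the former. To obtain the latter you must invoke the standing hypothesis $V(P(e))\cap V(P(f))\neq\emptyset$: any shared vertex $v$ is either an interior vertex of one of the two paths---hence lies in $\mathrm{ext}\,\gamma_n$ and is therefore off both $\theta(e)$ and $\theta(f)$ and belongs to both truncated paths---or is a common endpoint, in which case Condition~2 forces the adjacent vertex $v+\mathbf{e}_2$ to be a shared interior vertex. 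Either way the two truncated paths meet in the complement of the Jordan curve, which pins them to the same component. The paper signals this by writing ``Thus, assuming \eqref{eqn: intersection}, we have\ldots'' just before stating the claim; you should make the same invocation explicit.
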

We now assert that this implies:
\begin{claim}\label{coincidence_claim}
The  endpoints of $P(e)$ and $P(f)$ coincide.
\end{claim}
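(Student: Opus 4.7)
The plan is to combine the previously established claim with condition~(3) of Definition~\ref{def: shieldeddetours} to first force the endpoints of $P(f)$ onto the arc $Q(e)\subset\gamma_n$, and then carry out a topological case analysis on $\gamma_n$.

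First I would pin down the bounded component of $\mathbb{R}^2\setminus\theta(e)$. The Jordan curve $\theta(e)$ is assembled from $Q(e)\subset\gamma_n$ and the closed dual shield $R(e)$, which leaves $Q(e)$ from the dual neighbors of $w_0(e),w_M(e)$ in the $+\mathbf{e}_2$ direction; by condition~(2) this is the same direction in which $P(e)$ leaves $\gamma_n$ into $\mathrm{ext}\,\gamma_n$, so $R(e)\subset\mathrm{ext}\,\gamma_n$ and the bounded component $\mathrm{int}\,\theta(e)$ (the ``bulge'' between $Q(e)$ and $R(e)$) lies in $\mathrm{ext}\,\gamma_n$ and contains $P(e)\setminus\{w_0(e),w_M(e)\}$. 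The previous claim then forces $P(f)\setminus\{w_0(f),w_M(f)\}\subset\mathrm{int}\,\theta(e)$. Since $w_1(f)=w_0(f)+\mathbf{e}_2$ is an interior vertex of $P(f)$, the endpoint $w_0(f)$ lies in $\overline{\mathrm{int}\,\theta(e)}\cap\gamma_n=Q(e)$, and similarly for $w_M(f)$. Applying the previous claim symmetrically with $\theta(f)$ in place of $\theta(e)$ gives $\{w_0(e),w_M(e)\}\subset Q(f)$.

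Next, parametrize $\gamma_n$ cyclically and write $Q(e)=[a,b]$ with $a=w_0(e)$, $b=w_M(e)$, and (after relabeling $w_0(f),w_M(f)$ if necessary) place the endpoints of $P(f)$ as $c,d$ with $a\le c<d\le b$ in the cyclic order. Then $Q(f)$ is either the ``inside'' arc $[c,d]\subset Q(e)$ or the complementary arc $\gamma_n\setminus(c,d)$ going the long way round. In the first case the symmetric inclusion $\{a,b\}\subset[c,d]$ immediately collapses to $a=c$ and $b=d$, which is the claim. To eliminate the second case I would show that the Jordan curve $Q(f)\cup P(f)$ then encloses the origin, contradicting condition~(3) of Definition~\ref{def: shieldeddetours}. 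Since $P(f)$ meets $\gamma_n$ only at $c,d$ and its interior lies in the bulge $\mathrm{int}\,\theta(e)\subset\mathrm{ext}\,\gamma_n$, the Jordan curve $P(f)\cup[c,d]$ bounds a region $D\subset\mathrm{ext}\,\gamma_n$; the complement $\mathbb{R}^2\setminus(\gamma_n\cup P(f))$ has exactly three components $\mathrm{int}\,\gamma_n$, $D$, and an unbounded one, and removing $(c,d)$ from this separator merges the first two, so $\mathrm{int}(Q(f)\cup P(f))=\mathrm{int}\,\gamma_n\cup D\cup(c,d)$ contains the origin.

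The main obstacle is the last topological step: one has to check that $P(f)\cup[c,d]$ is really a Jordan curve bounding a region strictly inside $\mathrm{int}\,\theta(e)$ (this uses $P(f)\setminus\{c,d\}\subset\mathrm{ext}\,\gamma_n$ and the fact that $P(f)$ is simple), and then to account carefully for how the three components of $\mathbb{R}^2\setminus(\gamma_n\cup P(f))$ recombine once $(c,d)$ is removed from the separator. Beyond this Jordan-curve bookkeeping the argument uses no probabilistic input—only the structural conditions (2) and (3) of Definition~\ref{def: shieldeddetours} and the previously established claim about which components contain $P(e)$ and $P(f)$.
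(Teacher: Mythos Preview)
Your proposal is correct and in fact more careful than the paper's own argument. Both you and the paper first use the previously established claim (together with what the paper calls Claim~\ref{first_claim}) to conclude that the endpoints of $P(f)$ lie in $Q(e)$ and, symmetrically, that the endpoints of $P(e)$ lie in $Q(f)$. At this point the paper simply writes ``so that $w_0(e)=w_0(f)$ and $w_M(e)=w_M(f)$,'' without further justification. You correctly observe that the mutual inclusion $\{w_0(f),w_M(f)\}\subset Q(e)$ and $\{w_0(e),w_M(e)\}\subset Q(f)$ does not by itself force coincidence: if $Q(f)$ were the complementary arc $\gamma_n\setminus(c,d)$, both inclusions would still hold. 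Your case analysis, invoking condition~(3) of Definition~\ref{def: shieldeddetours} to rule out this complementary-arc case, is the genuine additional content needed to close the argument, and the paper's proof is arguably incomplete without it.

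One small point to tighten in your Case~2 argument: the assertion that $D=\mathrm{int}(P(f)\cup[c,d])\subset\mathrm{ext}\,\gamma_n$ is equivalent to saying the origin lies in the \emph{unbounded} component of $\theta(e)^c$. You argue for this in your first paragraph via the local geometry at $w_0(e)$, but strictly speaking Definition~\ref{def: shieldeddetours} does not force $R(e)$ to remain entirely in $\mathrm{ext}\,\gamma_n$. What does the job cleanly is the fact (established above) that the component $V$ of $\theta(e)^c$ containing $P(e)$ satisfies $V\cap\gamma_n=\emptyset$, hence $V\subset\mathrm{ext}\,\gamma_n$; since $P(f)\cup[c,d]\subset\overline V$ and the origin lies in the other component $W$ (as $0\in\mathrm{int}\,\gamma_n$ is disjoint from $V$ and from $\theta(e)$), you get $0\notin\mathrm{int}(P(f)\cup[c,d])$ directly---regardless of whether $V$ is the bounded or unbounded component. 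With that adjustment your parity/three-component argument goes through and yields the contradiction with condition~(3).
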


To see this, we start with the following  
\begin{claim}\label{first_claim}
Any vertex of $\gamma_n \setminus Q(e)$ lies in the component of $\theta(e)^c$ which does not contain $P(e)$.
\end{claim}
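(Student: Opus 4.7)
The plan is to apply the Jordan curve theorem, using the theta-graph structure formed by the three arcs $Q$, $P$, and $\gamma_n \setminus Q$ joining $w_0$ and $w_M$. First, I would show that $(\gamma_n \setminus Q) \setminus \{w_0, w_M\}$ is a connected set disjoint from $\theta(e)$, and hence lies in a single component of $\mathbb{R}^2 \setminus \theta(e)$. Disjointness follows from three observations: the self-avoidance of $\gamma_n$ implies that $\gamma_n \setminus Q$ and $Q$ share only $w_0$ and $w_M$; $\gamma_n \setminus Q$ consists of open primal edges while $R$ is a closed dual path, so they cannot cross; and the short line segments attaching $R$ to $Q$ lie in the faces immediately adjacent to $w_0$ and $w_M$, which by condition 2 of Definition~\ref{def: shieldeddetours} (together with the identities $w_1 = w_0 + \mathbf{e}_2$ and $w_{M-1} = w_M + \mathbf{e}_2$) can be checked to be avoided by the interior vertices of $\gamma_n \setminus Q$.

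Next, I would identify which component contains the vertices of $\gamma_n \setminus Q$. Conditions 3 and 4 say that two Jordan curves built from $P$, namely $P \cup Q$ and $R \cup \text{segments} \cup P$, do not enclose the origin; combined with the fact that $P \setminus \{w_0, w_M\}$ lies in $\text{ext}(\gamma_n)$, both curves enclose bounded regions sitting in $\text{ext}(\gamma_n) \cup \{w_0, w_M\}$. Gluing these regions together along the open arc $P$ shows that $\theta(e) = Q \cup \text{segments} \cup R$ bounds a region contained in $\text{ext}(\gamma_n) \cup Q$. Consequently, the interior of $\theta(e)$ is disjoint from $\text{int}(\gamma_n)$. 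Any vertex $v$ of $\gamma_n \setminus Q$ other than $w_0, w_M$ has a small neighborhood that meets $\text{int}(\gamma_n)$, so $v$ cannot lie in the open interior of $\theta(e)$; since $v \notin \theta(e)$, it lies in the exterior component, which is also the component containing the origin. Meanwhile $P \setminus \{w_0, w_M\}$ sits in the interior of $\theta(e)$ by the identification above, so the vertices of $\gamma_n \setminus Q$ and $P$ lie in opposite components, as claimed. The main obstacle will be verifying carefully that $\theta(e)$ really is a Jordan curve whose interior lies in $\text{ext}(\gamma_n) \cup Q$; this requires a brief case analysis of how $Q$ leaves $w_0$ and $w_M$ and of the orientation of the attached line segments, but once established, the rest of the argument is a short topological deduction from the Jordan curve theorem.
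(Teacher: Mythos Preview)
Your approach differs substantially from the paper's. The paper gives a two-line local parity argument: the unit segment from $w_1 = w_0 + \mathbf{e}_2$ to $w_1 - \mathbf{e}_1$ crosses $\theta(e)$ exactly once (through the first, vertical, dual edge of $R$), so these two vertices lie on opposite sides; $w_0 - \mathbf{e}_1$ is then seen to lie on the side of $w_1 - \mathbf{e}_1$, and every vertex of $\gamma_n \setminus Q$ connects to $w_0 - \mathbf{e}_1$ along $\gamma_n$ without meeting $\theta(e)$. Only condition~2 and the construction of $\theta(e)$ near $w_0$ are used.

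Your global route via conditions 3 and 4 does correctly yield $\text{int}(\theta(e)) \subset \text{ext}(\gamma_n)$: in fact, regardless of the theta-graph orientation one always has $\text{int}(Q\cup R') \subset \text{int}(P\cup Q) \cup P^o \cup \text{int}(R'\cup P)$, so your conclusion $(\gamma_n\setminus Q)^o \subset \text{ext}(\theta(e))$ is sound. The gap is in the last step: you assert that $P^o \subset \text{int}(\theta(e))$ ``by the identification above,'' but the gluing identity $\text{int}(\theta(e)) = \text{int}(P\cup Q) \cup P^o \cup \text{int}(R'\cup P)$ holds only when the unbounded face of the theta graph $(P,Q,R')$ is the one bordered by $Q$ and $R'$. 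If instead the face bordered by $P$ and $Q$ (or by $P$ and $R'$) were unbounded, $P^o$ would lie in $\text{ext}(\theta(e))$ together with $\gamma_n\setminus Q$, and the claim would fail. Ruling those configurations out requires pinning down on which side of $\gamma_n$ the dual arc $R$ sits and how $P$ and $R$ are ordered around $w_0$ --- exactly the local information from condition~2 that the paper uses directly. So your argument can be completed, but it ultimately needs the same local ingredient the paper dispatches in one stroke.
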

\begin{proof}
The vertices $w_1(e)$ and $w_1(e)-\mathbf{e}_1$ lie on opposite sides of $\theta(e)$, since they can be connected by a segment which intersects the curve exactly once. The vertex $w_0(e)-\mathbf{e}_1$ is in the same component as $w_1(e)-\mathbf{e}_1$, so it is in the component which does not contain $P(e)$. However each vertex of $\gamma_n \setminus Q(e)$ can be connected to $w_0(e)-\mathbf{e}_1$ by simply following $\gamma_n$ (without crossing $\theta(e)$).
\end{proof} 

Claim \ref{first_claim} implies that $P(f)$, lying as it does in the same component as $P(e)$, must have both of its endpoints in $Q(e)$ (since they must be in $\gamma_n$). The argument is symmetric, so that both endpoints of $P(e)$ must lie in $Q(f)$, so that $w_0(e)=w_0(f)$ and $w_M(e)=w_M(f)$. At this point, we have established Claim \ref{coincidence_claim}.

The coincidence of the endpoints of $P(e)$ and $P(f)$ implies also $Q(e)=Q(f)$, so that $P(e)$, together with the dual path from Condition 4. in the definition of $\mathcal{S}(e)$, satisfies Conditions 1-5 defining $\mathcal{S}(f)$. We have proved Proposition 4.

\subsection{Proof of Lemma \ref{circuit}}
By Proposition \ref{prop: four}, the set $E(\Pi)$ is a disjoint union of paths $P(e)=\pi(e)$ for a finite collection of edges $e \in \sigma_n$. Our strategy will be to inductively replace each portion $Q(e)$ of $\gamma_n$ by the detour path $P(e)$ and show that at each stage, we still have a circuit around the origin. In other words, enumerating the paths $P_1, P_2, \ldots$, and $Q_1, Q_2, \ldots$, we replace $Q_1$ with $P_1$ to create $\gamma_n^{(1)}$ from the original circuit $\gamma_n$. Then we replace a portion of $\gamma_n^{(1)}$ (which is $Q_2$) with $P_2$ to create $\gamma_n^{(2)}$, and so on. 

Note that at stage $k$, the path $P_{k+1}$ satisfies the definition of $\epsilon$-shielded detour with $\gamma_n$ replaced by $\gamma_n^{(k)}$. Indeed, since all paths $Q$ are disjoint for paths in $\Pi$, points 2-5 are obvious. Furthermore, assuming that $P_{k+1}$ satisfies point 1 with $\gamma_n$ equal to $\gamma_n^{(k-1)}$, it must also satisfy it with $\gamma_n$ equal to $\gamma_n^{(k)}$. If this were not the case, then if we write $\sigma^o$ for a path $\sigma$ excluding its endpoints, we would have $P_{k+1}^o \subset \mathrm{ext}\gamma_n^{(k-1)}$ but $P_{k+1}^o \cap \mathrm{int}\gamma_n^{(k)} \neq \emptyset$. But $P_{k+1}^o$ must also contain a point of $\mathrm{ext}\gamma_n^{(k)}$ since the paths $Q_i$ are disjoint (choose a point near an endpoint of $Q_{k+1}$), so this implies that $P_{k+1}$ must cross from the interior to the exterior. It cannot cross $\gamma_n$, so it must cross one of the $P_i$'s for $i \neq k+1$. This is a contradiction, since the $P_i$'s are disjoint.

Therefore to prove Lemma~\ref{circuit}, it will suffice to show the following:
\begin{claim}\label{lemma2claim}
Let $\sigma$ be a self-avoiding circuit surrounding the origin, and let $P$ denote a self-avoiding path with endpoints in $\sigma$, such that $P$ satisfies Conditions 1-5 in the definition of $\pi(e)$, with $\gamma_n$ replaced by $\sigma$. Then 
\[(\sigma \setminus Q) \cup P\]
is a circuit in $A(n)$ surrounding the origin. Here $Q$ is defined relative to $\sigma$ and $P$ as in Condition 3. in the definition of $\pi(e)$.
\end{claim}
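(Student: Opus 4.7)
The plan is to exhibit $(\sigma\setminus Q)\cup P$ as a Jordan curve and then locate the origin in its interior by exploiting the theta-graph structure of $Q\cup Q'\cup P$, where $Q'$ denotes the arc of $\sigma$ from $w_0$ to $w_M$ complementary to $Q$.

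First I would check that $Q'\cup P$ is a self-avoiding circuit in $A(n)$. Both $Q'$ and $P$ are self-avoiding: the former inherits this from $\sigma$, the latter by assumption. They can meet only at $w_0$ and $w_M$, because any other shared vertex would lie on $\sigma$, contradicting Condition~1 of Definition~\ref{def: shieldeddetours}, which places the interior vertices of $P$ in $\text{ext }\sigma$. Since distinct lattice edges meet in $\mathbb{R}^2$ only at shared endpoints, no edges are shared either. The containment in $A(n)$ is then immediate: $Q'\subset\sigma\subset A(n)$, the interior vertices of $P$ lie in $A(n)$ by Condition~1, and $w_0,w_M\in\sigma\subset A(n)$.

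The key step is to show that $(\sigma\setminus Q)\cup P$ surrounds the origin. The three arcs $Q,Q',P$ share only the endpoints $w_0,w_M$, so their union is a topological theta graph; hence its complement in $\mathbb{R}^2$ has three connected components, each bounded by two of the three arcs. Since $P\setminus\{w_0,w_M\}$ lies in $\text{ext }\sigma$, the arc $P$ does not enter $\text{int }\sigma$, so $\text{int }\sigma$ is one of the three faces, bounded by $\sigma=Q\cup Q'$. The remaining two faces are the pieces into which $P$ cuts $\text{ext }\sigma$: a bounded region $R$ and an unbounded region, each bounded by $P$ together with exactly one of $Q,Q'$.

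Now Condition~3 of Definition~\ref{def: shieldeddetours} does the work. The bounded face of the Jordan curve $Q\cup P$ is either $\text{int }\sigma$ or $R$; Condition~3 rules out $\text{int }\sigma$ because $\mathbf{0}\in\text{int }\sigma$. Hence $R$ is the bounded face of $Q\cup P$, which forces the Jordan curve $Q'\cup P$ to bound $\text{int }\sigma\cup R\cup\bigl(Q\setminus\{w_0,w_M\}\bigr)$. This set contains the origin, so $(\sigma\setminus Q)\cup P$ surrounds the origin, as required. The only real subtlety is this Jordan-curve bookkeeping step, and Condition~3 is precisely designed to resolve the ambiguity of which pair of arcs bounds the outer bounded face.
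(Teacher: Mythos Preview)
Your argument is correct and follows essentially the same theta-graph reasoning as the paper, which packages the key step as a separate ``ABC lemma'' (if arc $C$ lies inside the Jordan curve $A\cup B$, then $\mathrm{int}(B\cup C)\subset\mathrm{int}(A\cup B)$) and then argues by contradiction that $Q^o\subset\mathrm{int}(P\cup Q')$. One phrasing to tighten: your dichotomy ``the bounded face of $Q\cup P$ is either $\mathrm{int}\,\sigma$ or $R$'' is not literally right---in the case you want to exclude, $\mathrm{int}(Q\cup P)$ equals $\mathrm{int}\,\sigma\cup (Q')^o\cup R$, not $\mathrm{int}\,\sigma$ alone---but since this set still contains the origin, Condition~3 rules it out just the same and your conclusion stands.
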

We first show: 
\begin{claim} \label{alternative}
\[Q^o\subset \mathrm{int} (P\cup (\gamma_n\setminus Q)).\]
\end{claim}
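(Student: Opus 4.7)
The plan is to reduce Claim~\ref{alternative} to showing $\mathrm{int}(\sigma) \subset \mathrm{int}(\Gamma)$, where $\Gamma := P \cup (\sigma \setminus Q)$ (writing $\sigma$ for $\gamma_n$ to match the ambient proof of Claim~\ref{lemma2claim}). Granted this containment, the claim follows at once: for any $v \in Q^o$, self-avoidance of $\sigma$ rules out $v \in \sigma \setminus Q$, and Condition~1 of Definition~\ref{def: shieldeddetours} forces $P^o \subset \mathrm{ext}(\sigma)$, hence $v \notin P$; so $v \notin \Gamma$, and a small open disk $U \ni v$ avoids $\Gamma$. Since $v \in \partial\,\mathrm{int}(\sigma)$, $U$ meets $\mathrm{int}(\sigma) \subset \mathrm{int}(\Gamma)$, forcing $v \in \mathrm{int}(\Gamma)$.

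Before running this argument, I would check that $\Gamma$ is a Jordan curve. Condition~1 gives $P \cap (\sigma \setminus Q) = \{w_0, w_M\}$, so the two self-avoiding arcs glue along their common endpoints into a simple closed curve. The same observation gives $\mathrm{int}(\sigma) \cap \Gamma = \emptyset$, so by connectedness $\mathrm{int}(\sigma)$ lies entirely in one of the two components $\mathrm{int}(\Gamma)$ or $\mathrm{ext}(\Gamma)$.

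To pin down the correct component, I would locate the origin via a parity-of-intersections argument. Fix a ray $\rho$ from $\mathbf{0}$ to infinity in general position with respect to $\sigma$, $P$, and $Q$ (transverse to each and missing all endpoints and vertices). Viewing $\sigma$, $\Gamma_2 := Q \cup P$, and $\Gamma$ as $\mathbb{Z}/2$-cycles, the common arc $Q$ cancels in the symmetric difference, so
\begin{equation*}
\#(\rho \cap \Gamma) \equiv \#(\rho \cap \sigma) + \#(\rho \cap \Gamma_2) \pmod{2}.
\end{equation*}
The first term on the right is odd because $\sigma$ surrounds $\mathbf{0}$, and the second is even by Condition~3 (which says $\Gamma_2$ does not surround the origin). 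Hence $\#(\rho \cap \Gamma)$ is odd, $\mathbf{0} \in \mathrm{int}(\Gamma)$, and by the connectedness noted above, $\mathrm{int}(\sigma) \subset \mathrm{int}(\Gamma)$.

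The main technical nuisance is justifying the mod-$2$ identity rigorously: one must choose $\rho$ so that every crossing of each of $\sigma$, $P$, $Q$ is a clean transverse crossing, so that intersections with $\Gamma$ correspond bijectively to intersections with $\sigma \setminus Q$ and with $P$ with no hidden tangencies at lattice vertices. This is a routine general-position perturbation, but deserves at least a remark. Everything else is local and topological, and relies only on Conditions~1 and 3 from Definition~\ref{def: shieldeddetours}.
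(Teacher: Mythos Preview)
Your argument is correct, but it follows a genuinely different route from the paper's. The paper proves the claim by contradiction: assuming $Q^o\subset\mathrm{ext}\,\Gamma$, it splits into the two cases $(\gamma_n\setminus Q)^o\subset\mathrm{int}(P\cup Q)$ and $(\gamma_n\setminus Q)^o\subset\mathrm{ext}(P\cup Q)$, and in each case reaches a contradiction (the first via the ABC Lemma and Condition~3, the second via Condition~1). Only \emph{after} establishing $Q^o\subset\mathrm{int}\,\Gamma$ does the paper apply the ABC Lemma once more (with $A=P$, $B=\gamma_n\setminus Q$, $C=Q$) to conclude $\mathrm{int}\,\gamma_n\subset\mathrm{int}\,\Gamma$ and hence $0\in\mathrm{int}\,\Gamma$.

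You invert this logical order: the mod-$2$ intersection identity $\#(\rho\cap\Gamma)\equiv\#(\rho\cap\sigma)+\#(\rho\cap(P\cup Q))$ directly gives $0\in\mathrm{int}\,\Gamma$, from which $\mathrm{int}\,\sigma\subset\mathrm{int}\,\Gamma$ follows by connectedness, and $Q^o\subset\mathrm{int}\,\Gamma$ drops out as a corollary. This avoids both the case analysis and the ABC Lemma entirely; in fact, your parity computation already delivers Claim~\ref{lemma2claim} (that $\Gamma$ surrounds the origin) before you even touch Claim~\ref{alternative}. The paper's route isolates a reusable topological lemma, while yours is shorter and more self-contained for this specific purpose. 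Your remark about choosing $\rho$ in general position is the right caveat; on the square lattice it is enough to take $\rho$ with irrational slope and not passing through any lattice point, so that every crossing of every edge of $P$, $Q$, or $\sigma\setminus Q$ is transverse and occurs in the open interior of that edge.
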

The connected set $Q^o$ lies either in $\mathrm{int} (P\cup (\gamma_n\setminus Q))$ or in $\mathrm{ext} (P\cup (\gamma_n\setminus Q))$. It will suffice to exclude the latter case. For this, we will use a simple intermediate result.
\begin{lma}[The ABC lemma] \label{ABClemma} Let $A(t)$, $B(t)$ and $C(t)$, $t\in [0,1]$ be three self-avoiding, continuous curves in $\mathbb{R}^2$. Denote their images by $A=A([0,1])$, $B=B([0,1])$, $C=C([0,1])$. Suppose
\[A(0)=B(0)=C(0)=a\]
and
\[A(1)=B(1)=C(1)=b\]
with $a\neq b$, and
\[A\cap B = A\cap C = B\cap C = \{a,b\}.\]
Form the three Jordan curves $A\cup B$, $A \cup C$ and $B\cup C$, and suppose 
\[C\setminus\{a,b\} \subset \mathrm{int}(A\cup B).\]
Then 
\[\mathrm{int} (B\cup C) \subset \mathrm{int}(A\cup B).\]
\begin{proof}
Any continuous path from a point in $\mathrm{int} (B\cup C)$ to infinity must cross either $B$ or $C$. If it does not cross $B$, it must cross $C$ at a point in $\mathrm{int} (A\cup B)$, after which it must cross $A$, and the lemma follows.
\end{proof}
\end{lma}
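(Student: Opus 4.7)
The plan is to recognize $\mathrm{int}(B\cup C)$ as one of the two Jordan subdomains into which the simple arc $C$ divides the Jordan domain $\mathrm{int}(A\cup B)$. The hypothesis $C\setminus\{a,b\}\subset\mathrm{int}(A\cup B)$ is precisely the statement that $C$ is a crosscut of the closed Jordan domain bounded by $A\cup B$: its relative interior lies in the open domain, and its two endpoints lie on the bounding Jordan curve $A\cup B$.

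Given this, I would invoke the classical crosscut theorem (a corollary of the Jordan--Schoenflies theorem): a crosscut of a Jordan domain partitions the open domain into exactly two connected open pieces, whose topological boundaries are the two Jordan curves obtained by pairing the crosscut with one of the two sub-arcs into which its endpoints divide the bounding curve. In our setting these sub-arcs are precisely $A$ and $B$, so one obtains open components $U_A,U_B\subset \mathrm{int}(A\cup B)$ with $\partial U_A=A\cup C$ and $\partial U_B=B\cup C$. Each of $U_A,U_B$ is bounded (being contained in the bounded set $\mathrm{int}(A\cup B)$), so by the Jordan curve theorem each must coincide with the bounded component of the complement of its boundary, i.e.\ $U_A=\mathrm{int}(A\cup C)$ and $U_B=\mathrm{int}(B\cup C)$. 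Consequently
\[
\mathrm{int}(B\cup C)\;=\;U_B\;\subseteq\;\mathrm{int}(A\cup B),
\]
which is the statement of the lemma.

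The only non-trivial input is the crosscut theorem itself; once it is available, the conclusion reduces to a short bookkeeping exercise with the Jordan curve theorem. If a more self-contained argument in the spirit of the rest of this section is preferred, one can instead fix $p\in\mathrm{int}(B\cup C)$ and show directly that every continuous path from $p$ to infinity must cross $A\cup B$: such a path has to leave the bounded set $\mathrm{int}(B\cup C)$ and so meets $B\cup C$ at some first time; at this meeting it is either on $B\subset A\cup B$, in which case we are done, or on the open arc $C\setminus\{a,b\}\subset\mathrm{int}(A\cup B)$, in which case the path must subsequently cross $A\cup B$ in order to reach infinity. Ruling out $p\in A\cup B$ --- using openness of $\mathrm{int}(B\cup C)$ together with the observation that $A\setminus\{a,b\}$ is a connected set disjoint from $B\cup C$, and hence lies in $\mathrm{ext}(B\cup C)$ by a quick check near the endpoints --- then again yields $p\in\mathrm{int}(A\cup B)$.
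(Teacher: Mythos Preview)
Your second, ``self-contained'' argument is precisely the paper's proof: take any path from $p\in\mathrm{int}(B\cup C)$ to infinity, note it must meet $B\cup C$, and if it avoids $B$ then it hits $C\setminus\{a,b\}\subset\mathrm{int}(A\cup B)$ and must thereafter cross $A\cup B$. The paper dispatches this in two sentences and does not pause to rule out $p\in A$; your extra remark that $A\setminus\{a,b\}$ is a connected subset of $(B\cup C)^c$ lying in $\mathrm{ext}(B\cup C)$ is a fair point of rigor, though arguably already implicit.

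Your first approach, invoking the crosscut theorem, is a genuinely different route. It is correct and has the virtue of making the picture transparent: $C$ is a crosscut of the Jordan domain bounded by $A\cup B$, so the two subdomains it cuts out are exactly $\mathrm{int}(A\cup C)$ and $\mathrm{int}(B\cup C)$, and both sit inside $\mathrm{int}(A\cup B)$ by construction. The cost is that you import a consequence of Jordan--Schoenflies, which is heavier machinery than the bare Jordan curve theorem the paper's argument uses. For a lemma this elementary, the paper's two-line path-to-infinity argument is the more economical choice; your crosscut formulation would be preferable if one wanted a clean statement to reuse (e.g.\ it immediately gives $\mathrm{int}(A\cup C)\subset\mathrm{int}(A\cup B)$ as well, and that the two interiors partition $\mathrm{int}(A\cup B)\setminus C$).
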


Let us now return to the proof of Lemma \ref{circuit}. We assume
\begin{equation}\label{drpepper}
Q^o \subset \mathrm{ext} (P \cup (\gamma_n\setminus Q)).
\end{equation}
Because $P$ does not cross $\gamma_n$, two possibilities arise: either 
\begin{equation}
\label{orangina}
(\gamma_n\setminus Q)^o \subset \mathrm{int}(P\cup Q),
\end{equation}
or 
\begin{equation}\label{cherrycoke}
(\gamma_n\setminus Q)^o \subset \mathrm{ext}(P\cup Q).
\end{equation}
In case \eqref{orangina} holds, we can apply the ``ABC Lemma'' \ref{ABClemma} to find $0 \in \mathrm{int}\gamma_n \subset P\cup Q$, which is not possible by Condition 3 in the definition of $P$.

Now, assume \eqref{cherrycoke} holds. $P$ is contained the boundary of both $\mathrm{int}(P\cup Q)$ and $\mathrm{int}(P\cup (\gamma_n\setminus Q))$. Recalling that we have also \eqref{drpepper}, we see that these two Jordan domains are disjoint. Thus, every point in a sufficiently small neighborhood of a point of $P^o$ lies in exactly one of these two domains (if it is not in $P$). It follows that any curve joining a point in $P^o$ to infinity, intersecting $P^o$ only at its starting point, must intersect either $\gamma_n\setminus Q$ or $Q$, so 
\[P^o \subset \mathrm{int} \gamma_n,\]
a contradiction to Condition 2. in the definition of $\pi(e)$. At this point, we have established Claim \ref{alternative}. Applying Lemma \ref{ABClemma}, find  
\[0 \in \mathrm{int} \gamma_n \subset \mathrm{int} (P\cup (\gamma_n \setminus Q)),\]
which is Claim \ref{lemma2claim}.

\subsection{Proof of Lemma \ref{lem: piempty}}
By Proposition~\ref{prop: four}, the set $E(\Pi)$ is a disjoint union of sets of the form $\pi(e)$, where $e$ ranges over a finite collection $S$ of edges in $\hat{\Pi}$. If $e\in \hat \gamma_n\setminus \hat{\Pi}$ and $\pi(e)\neq \emptyset$ then, again by Proposition~\ref{prop: four}, 
\[\pi(f)\cap \pi(e) =\emptyset\] 
for all $f \in S$. This contradicts the maximality of $\Pi$, provided we show
\begin{claim}
For all $f\in S$, 
\[\hat{\pi}(e)\cap \hat{\pi}(f)=\emptyset.\]
\end{claim}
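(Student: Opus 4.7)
The plan is to argue by contradiction, combining a topological analysis of the two ``detour bulges'' with the maximality of the collection $\Pi$. Suppose for some $f\in S$ one has $\hat\pi(e)\cap\hat\pi(f)\neq\emptyset$. By Proposition~\ref{prop: four} applied to $e$ and $f$, either $\pi(e)=\pi(f)$ (in which case $w_0,w_M$ coincide and $e,f$ lie on the same subpath forced by Condition 3, giving $\hat\pi(e)=\hat\pi(f)\subset\hat\Pi$ and contradicting $e\notin\hat\Pi$) or the detour paths $\pi(e),\pi(f)$ are vertex-disjoint. Assume the latter.

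The first step is to show that $\hat\pi(e)$ and $\hat\pi(f)$ are nested, i.e.\ one is contained in the other. Consider the Jordan regions $R(e),R(f)$ bounded by $\hat\pi(e)\cup\pi(e)$ and $\hat\pi(f)\cup\pi(f)$, both of which by Condition 3 of Definition~\ref{def: shieldeddetours} are bounded subsets of $\overline{\mathrm{ext}\,\gamma_n}$ not containing the origin. If the endpoints $w_0(e),w_M(e),w_0(f),w_M(f)$ on $\gamma_n$ were interleaved in cyclic order, then in the topological closed disk $\overline{\mathrm{ext}\,\gamma_n}\cup\{\infty\}$ the two chords $\pi(e),\pi(f)$ would be forced to cross by the Jordan curve theorem, contradicting disjointness; so the four endpoints are non-interleaved. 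Because in the construction of Proposition~\ref{prop: whyek} each detour lies inside a box of the form $B(3^{k+1})\setminus B(3^{k-1})$ that does not contain the origin, each $\hat\pi(\cdot)$ must be the local ``short arc'' between its endpoints (the alternative ``long arc'' choice would force $\pi(\cdot)$ to encircle the origin, in conflict with Condition 3). A short case analysis of the four non-interleaved possibilities, with both $\hat\pi(\cdot)$'s short, yields that $\hat\pi(e),\hat\pi(f)$ are either disjoint or nested.

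If $\hat\pi(e)\subset\hat\pi(f)$, then $e\in\hat\pi(e)\subset\hat\pi(f)\subset\hat\Pi$, contradicting $e\notin\hat\Pi$. So $\hat\pi(f)\subset\hat\pi(e)$, and to finish one invokes the maximality of $\Pi$. Let $S_1=\{f'\in S:\hat\pi(f')\cap\hat\pi(e)\neq\emptyset\}$; applying the nested-arc analysis to each $f'\in S_1$ gives $\hat\pi(f')\subset\hat\pi(e)$ for every such $f'$. Define $\Pi'=(\Pi\setminus\{\pi(f'):f'\in S_1\})\cup\{\pi(e)\}$. This is a valid collection of shielded detours whose detoured paths remain pairwise vertex-disjoint (the removed ones were exactly those meeting $\hat\pi(e)$), and since $e\in\hat\pi(e)$ while $e\notin\bigcup_{f'\in S_1}\hat\pi(f')$ (as $e\notin\hat\Pi$), the disjoint union bound gives
\[
\sum_{\pi\in\Pi'}\#\hat\pi \;=\; \sum_{\pi\in\Pi}\#\hat\pi \;-\; \#\Bigl(\bigcup_{f'\in S_1}\hat\pi(f')\Bigr) \;+\; \#\hat\pi(e) \;>\; \sum_{\pi\in\Pi}\#\hat\pi,
\]
contradicting the maximality of $\Pi$. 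The main obstacle I anticipate is the topological step excluding the ``two-arc overlap'' configuration; the cleanest route is via the locality of $\pi(e),\pi(f)$ inherited from Proposition~\ref{prop: whyek}, though a purely intrinsic argument combining Conditions 3 and 4 of Definition~\ref{def: shieldeddetours} (the shielding closed dual path $R$ together with the prescribed vertical first and last edges) should also suffice.
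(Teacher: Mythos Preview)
Your approach is different from the paper's and, once the topological step is completed, correct---but the completion you lean on most is not available. The paper's argument is much shorter: it shows directly that $\hat\pi(e)\cap\hat\pi(f)\neq\emptyset$ forces $\pi(e)\cap\pi(f)\neq\emptyset$. Indeed, if the two detoured arcs share a segment but are not equal, some endpoint of one, say $w_0(e)$, is an interior vertex of $\hat\pi(f)$; since by Condition~2 the circuit $\gamma_n$ passes \emph{horizontally} through $w_0(e)$ while $\pi(e)$ leaves vertically, a Jordan-curve argument using the shield $\theta(f)$ from the proof of Proposition~\ref{prop: four} forces $\pi(e)$ to cross $\pi(f)$. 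Then Proposition~\ref{prop: four} gives $\pi(e)=\pi(f)$, so $e\in\hat\pi(f)\subset\hat\Pi$, a contradiction. No maximality is invoked inside the Claim; maximality is used only afterwards, to add $\pi(e)$ to $\Pi$.

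Your route (non-crossing chords $\Rightarrow$ nested-or-disjoint arcs, then a swap contradicting maximality) also works, but your primary fix for the ``two-arc overlap'' case is wrong: $\pi(e)$ is by definition the \emph{first} element of $\mathcal S(e)$ in a fixed ordering, not the one produced in Proposition~\ref{prop: whyek}, so you cannot appeal to locality in a box $B(3^{k+1})\setminus B(3^{k-1})$. Your fallback to Condition~3 is the right instinct and in fact suffices on its own. In the disk $D_{\mathrm{out}}=\overline{\mathrm{ext}\,\gamma_n}\cup\{\infty\}$ the two disjoint chords $\pi(e),\pi(f)$ cut $D_{\mathrm{out}}$ into three regions, exactly one of which contains $\infty$; unwinding Condition~3 shows that $\hat\pi(\cdot)$ is always the arc bounding the piece of $D_{\mathrm{out}}$ \emph{not} containing $\infty$. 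Checking the three possible locations of $\infty$ yields precisely the three outcomes ``$\hat\pi(e)\subset\hat\pi(f)$'', ``$\hat\pi(f)\subset\hat\pi(e)$'', ``disjoint'', and never the two-arc overlap. With this in hand your Steps~3--4 are fine (the strict inequality in Step~4 holds because $e\in\hat\pi(e)\setminus\bigcup_{f'\in S_1}\hat\pi(f')$).

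In short: the paper's proof is a one-step Jordan-curve argument exploiting the vertical-edge convention of Condition~2; yours trades that local structure for a global chord/region analysis plus an extra appeal to maximality. Both are valid, but you should drop the Proposition~\ref{prop: whyek} locality claim and make the Condition~3 region analysis explicit.
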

This is because if the intersection is non-empty, then $\hat{\pi}(e)$ and $\hat{\pi}(f)$ must share a common segment, which forces 
\begin{equation}
\pi(e)\cap \pi(f)\neq \emptyset.\label{neinter}
\end{equation}
 Indeed, if $\hat{\pi}(e)$ and $\hat{\pi}(f)$ coincide, then the two initial and two final vertices of $\pi(e)$ and $\pi(f)$ coincide by Condition 2. in the definition of $\mathcal{S}(e)$, and otherwise one endpoint of $\hat{\pi}(e)$ must be equal to some non-endpoint vertex of $\hat{\pi}(f)$. By the Jordan Curve Theorem, $\pi(e)$ must then intersect $\pi(f)$. Given \eqref{neinter}, Proposition~\ref{prop: four} now implies $\pi(e)=\pi(f)$, which contradicts the assumption on $e$.

\vskip 0.3in 

\acknow \  We wish to thank Michael Aizenman, with whom we had many inspiring discussions in the early stages of this project. In particular, he suggested to us that $\mathbf{Var}(L_n)\sim (\mathbf{E} L_n)^2$, which led us to the proof of \eqref{eqn: aizenman-sugg}. The realization that paths confined to small regions are short later turned out to be a crucial point.

We thank Eviatar Procaccia for pointing out that the work of Grimmett and Marstrand \cite{grimmettmarstrand} addresses the problem of the chemical distance in the supercritical phase. We are especially indebted to Art\"em Sapozhnikov for his detailed comments on this work, and for pointing out a mistake in an earlier version of our argument. This work was completed during the Sherman Memorial Conference at Indiana University, Bloomington, in May 2015. 


\end{document}